\newtheorem{theorem}{Theorem}
\newtheorem{lemma}{Lemma}
\newtheorem{definition}{Definition}
\newtheorem{remark}{Remark}
\title{On the inequalities in Hermite's theorem for a real polynomial to have real zeros}
\author{Mario DeFranco}
\begin{document}
\maketitle{}

\abstract{We prove expressions for the inequalities in Hermite's theorem which are conditions for a real polynomial to have real zeros. These expressions generalize the discriminant of a quadratic polynomial and the expression of J. Ma\u{r}\'ik for a cubic polynomial. We show that the $(k+1)$-th minor of the Hermite matrix associated a polynomial $p(x)$ is equal to the $k$-th minor of another matrix we call $E(n)$ times $n^{k-1}$ and a simple integer. To prove this equivalence, we prove generalizations of the discriminant of a polynomial and analyze certain labeled directed graphs. To define this matrix $E(n)$ we define functions $M(m_2,m_1,n)$ which are positive if the zeros of $p(x)$ are positive.}

\section{Introduction}\label{intro}

Let $p(x)$ be monic polynomial of degree $n$ with real coefficients
\[
p(x) = \sum_{i=0}^n a_i x^n.
\]
The Hermite theorem (see \cite{Obrechkoff}) describes how to determine the number of real zeros of $p(x)$ by checking certain inequalities involving the coefficients $a_i$. These inequalities are defined in terms of the leading principal minors of a certain matrix which we denote by $H(n)$. This paper proves that those minors are equal to the leading principal minors of another matrix $E(n)$ which we define in Section \ref{E}. The matrix $E(n)$ thus provides an alternative way of expressing these inequalities. 

We now state the results more specifically. Let $z_1, ..., z_n$ denote the zeros of monic polynomial $p(x)$ of degree $n$ with real coefficients. For integer $k \geq 0$, denote the power-sum functions by
\[
p_k(z_1, ..., z_n) = \sum_{i=1}^n z_i^k.
\]
The Hermite matrix $H(n)$ associated to $p(x)$ is the $n \times n$ matrix whose entries $H(n)_{i,j}$ are 
\[
H(n)_{i,j} = p_{i+j-2}(z_1,...,z_n).
\]
Let $\Delta_k(H(n))$ denote the determinant of the upper left $k \times k$ submatrix of $H(n)$. This determinant is known as the $k$-th leading principal minor with 
\[
\Delta_0(H(n))=1 \text { and }  \Delta_n(H(n))= \det(H(n)).
\]
Hermite's theorem then says that the zeros $z_1, ..., z_n$ are all real if and only if 
\begin{equation} \label{delta inequalities}
\Delta_{k}(H(n)) > 0
\end{equation}
for $1 \leq k \leq n$. Since power-sum functions are symmetric functions of the $z_i$, they may be expressed in terms of the elementary symmetric functions 
\[
e_k = e_k(z_1,...,z_n) = \sum_{1 \leq i_1< ...<i_k \leq n} \prod_{j=1}^k z_{i_j}.
\]
These elementary symmetric functions are then the coefficients 
\[
a_{n-k} = (-1)^k e_k(z_1,...,z_n).
\]
Thus the expressions $\Delta_{k}(H(n))$ may be expressed in terms of the $a_i$ by converting the power-sum functions into elementary symmetric functions, for example by applying the Newton-Girard identities.

We define an infinite matrix $E(n)$ and prove for $1\leq k \leq n-1$ 
\[
 (\prod_{i=1}^k i!)^2 n^{k-1}\Delta_{k+1} (H(n))=\Delta_{k}(E(n)).
\]
We explicitly express the coefficients of $E(n)$ in terms the $a_i$ without using the Newton-Girard identities.  To do this, we define the functions $M(m_1,m_2,n)$. We prove that these functions can be expressed as homogenous degree-2 polynomials evaluated at the $a_i$ and whose coefficients are linear functions of $n$. We prove that the entries of $E(n)$ are finite positive-integer linear combinations of the the $M(m_1,m_2,n)$. Thus the $M(m_1,m_2,n)$ reveal more structure to the inequalities \eqref{delta inequalities}. We also prove that each $M(m_1,m_2,n)$ is a polynomial with positive coefficients in the zeros $z_i$ of $p(x)$; therefore if the $z_i$ are all positive, then so are the $M(m_1,m_2,n)$. Thus if we fix $m_1$ and $m_2$ and let $n$ vary, the functions $M(m_1,m_2,n)$ provide a sequence of checks on the for the positivity of the zeros of $p(x)$. This sequence could be useful in establishing patterns for applying these inequalities to the Jensen polynomials for general entire functions. 
We present the upper left $3 \times 3$ submatrix of $E(n)$ in terms of the $M(m_1, m_2, n) = M_{m_1,m_2}$ functions and the elementary symmetric functions $e_k$:
\[
E(n) = \begin{bmatrix} 
M_{10}& M_{11}& M_{12}&\\ 
M_{11}&M_{20}+M_{12}&2M_{21}+M_{13}&... \\ 
M_{12}&2M_{21}+M_{13}& 2M_{30}+4M_{22}+M_{14}&\\ 
&  \vdots && \end{bmatrix}=
\]
\[
\begin{bmatrix} 
(n-1)e_1^2-2ne_0e_2 & (n-2)e_1 e_2 -3ne_0 e_3 &(2n-6)e_1e_3 -8ne_0 e_4\\ 
(n-2)e_1 e_2 -3ne_0&(2n-4)e_2^2 -2ne_1e_3-4ne_0e_4   &(4n-12)e_2e_3 -6ne_1e_4 - 10n e_0 e_5 \cdots \\ 
(2n-6)e_1e_3 -8ne_0 e_4&(4n-12)e_2e_3 -6ne_1e_4 - 10n e_0 e_5& (12n-36)e_3^2-8ne_2e_4-16ne_1e_5 - 24ne_0e_6\\ 
&  \vdots & \end{bmatrix}
\]
These expressions in $E(n)$ also generalize the discriminant of the quadratic equation and the expression of J. Ma\u{r}\'ik. That is, when $n=2$, 
\[
E(2)_{1,1} = a_1^2-4 a_0 a_2
\]
and when $n=3$ we get a $2 \times 2$ matrix whose determinant directly yields the expression of J.  Ma\u{r}\'ik:
\[
4(a_1^2-3a_0a_2)(a_2^2-3a_1a_3) - (a_1a_2-9a_0a_3)^2,
\]
see \cite{Marik} and \cite{Dimitrov}. Determinant expressions have been used by Csordas, Norfolk, and Varga in \cite{Csordas}
 and by Dimitrov and Lucas in \cite{Dimitrov} to prove that the Jensen polynomials of degree 2 and degree 3, respectively, for the Riemann xi function have real zeros. Thus the determinant expressions and insight into their structure could be useful for establishing the real zeros of these Jensen polynomials in general.
 
 We also note that minors of the Bezoutian matrix $B=B(p(x), p'(x))$ (see  \cite{Obrechkoff}) are another way to obtain inequalities for the real zeros of $p(x)$. This matrix entries are also degree-2 homogenous polynomials evaluated at $a_i$. It is different from $E(n)$ in that the entries do not depend on $n$ and uses $(k+1)$-st minors $\Delta_{k+1}(B)$ to get conditions corresponding to $\Delta_k(E(n))$. Thus $E(n)$ provides an alternative to the Bezoutian matrix and decrease the dimensions of the minors by 1. 
 
We describe the layout of this paper. In Section \ref{H formula} we prove a formula for $\Delta_{k+1}(H(n))$ which generalizes the discriminant of a polynomial using the Schur polynomials. We define $E(n)$ and $M(m_1,m_2,n)$ in Section \ref{E}. In characterizing the $M(m_1,m_2,n)$ functions we define ``incomplete" monomial and elementary symmetric functions; that is, symmetric functions whose arguments are a subset of the $z_i$. We then prove a formula for $\Delta_{k}(E(n))$ in Section \ref{E formula}. We show that these two formulas are equal up to a factor of $n^{k-1}$. To do this we analyze certain label directed graphs.  
 
\section{Formula for leading principal minors of the Hermite matrix $H(n)$}\label{H formula}
We fix a positive integer $n$ and use the indeterminates $z_1, ..., z_n$. These $z_i$ correspond to the $n$ roots of a polynomial of degree $n$. We will use the following standard symmetric functions in the $z_i$:
\begin{definition}
For integer $k \geq 0$, denote the power-sum functions by
\[
p_k = \sum_{i=1}^n z_i^k.
\]
For integer $k \geq 1$, denote the elementary-symmetric functions by
\[
e_k = \sum_{1\leq i_1 < ...< i_k \leq n} \prod_{j=1}^k z_{i_j}.
\]
In this notation we leave the $n$ and $z$ dependence implicit.
\end{definition}

The Hermite matrix $H(n)$ is defined in terms of the power-sum symmetric functions $p_k$. In our formulas we will make use of a generalization of the Hermite matrix which we define next.
\begin{definition}

Let $\lambda =( \lambda_1,\lambda_2, ...,\lambda_k)$ be a $k$-tuple of integers with $0\leq \lambda_i <\lambda_{i+1}$. 
Define $H(\lambda; n)$ to be the $k\times k$ matrix with entries
\[
H(\lambda; n)_{i,j} = p_{\lambda_i+j-1}.
\]
For $\lambda = (0,1,2,..., k-1)$, denote the Hermite matrix $H(n)$ by 
\[
H(n) = H(\lambda;n).
\]
\end{definition}
We will prove formulas for the leading principal minors $\Delta_k$ of $H(\lambda;n)$. 
\begin{definition} 
Let $F$ be an infinite matrix with entries $F_{i,j}$. Let $F_k$ denote the $k \times k$ submatrix with entries $(F_k)_{i,j}$ for $1 \leq i,j \leq k$. The denote 
\[
\Delta_k(F)  = \det(F_k).
\] 
\end{definition} 

\begin{definition} 
Let $\lambda$ be a $k$-tuple of integers
\[
\lambda = (\lambda_1, ..., \lambda_k).
\]
Let $x_1, ..., x_k$ be $k$ indeterminates. 
Then let $V(x_1, x_2, ..., x_k; \lambda)$ denote the $k \times k$ matrix with entries
\[
V(x_1, x_2, ..., x_k; \lambda)_{i,j} = x_{j}^{\lambda_i}. 
\]
Also denote 
\[
D(x_1, ..., x_k) = \det(V(x_1, ..., x_k; (0,1,2,..., k-1))) = \prod_{1 \leq i < j \leq k} (x_j - x_i)
\]
with
\[
D(x_1) =1.
\]
Let $S(x_1, x_2, ..., x_k; \lambda)$ denote the Schur polyonomial 
\[
 S(x_1, x_2, ..., x_k; \lambda) = \frac{\det(V(x_1, x_2, ..., x_k; \lambda))}{\prod_{1\leq i < j \leq k} (x_j - x_i )}.
\]
\end{definition} 
\begin{definition}
Let $C(k,n)$ denote the set of all subsets of order $k$ of the set $\{ 1,2,...,n\}$.
 For $b \in C(k,n)$ 
 \[
 b = \{b_1, ..., b_k \}
 \]
 with $b_i < b_{i+1}$, 
  let $z(b)$ denote the $k$-tuple 
 \[
 z(b) = (z_{b_1}, ..., z_{b_k}).
 \]
 For $1 \leq i \leq k$, let $\hat{b}_i \in C(k-1,n)$ be
 \[
\hat{b}_i = (b_1, ..., b_{i-1}, b_{i+1}, ..., b_{k}).  
\]
\end{definition}

\begin{theorem}
Let $\lambda$ be a $k$-tuple of integers 
\[
\lambda = (\lambda_1, ..., \lambda_k)
\]
with $0\leq \lambda_i \leq \lambda_{i+1}$. 
Then
\[
\det(H(\lambda; n)) = \sum_{b \in C(k,n)} S(z(b); \lambda) D(z(b))^2
\]
\end{theorem}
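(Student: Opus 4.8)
The plan is to recognize $H(\lambda;n)$ as a product of two rectangular matrices and then apply the Cauchy--Binet formula. First I would expand the entries of $H(\lambda;n)$ using the definition of the power sums:
\[
H(\lambda;n)_{i,j} = p_{\lambda_i+j-1} = \sum_{\ell=1}^n z_\ell^{\lambda_i+j-1} = \sum_{\ell=1}^n z_\ell^{\lambda_i}\, z_\ell^{j-1}.
\]
This exhibits $H(\lambda;n)$ as the product $AB$, where $A$ is the $k\times n$ matrix with $A_{i,\ell}=z_\ell^{\lambda_i}$ and $B$ is the $n\times k$ matrix with $B_{\ell,j}=z_\ell^{j-1}$.

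Next I would apply the Cauchy--Binet formula to $\det(AB)$. Since $A$ has $k$ rows and $n\geq k$ columns, the formula gives
\[
\det(H(\lambda;n)) = \det(AB) = \sum_{b\in C(k,n)} \det(A_b)\,\det(B^b),
\]
where $A_b$ denotes the $k\times k$ submatrix of $A$ formed by the columns indexed by $b$, and $B^b$ the $k\times k$ submatrix of $B$ formed by the rows indexed by $b$. The problem then reduces to identifying these two minors with the Vandermonde-type determinants already defined.

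The third step is the matching. For $b=\{b_1,\dots,b_k\}$ the column-$b$ submatrix of $A$ has entries $(A_b)_{i,m}=z_{b_m}^{\lambda_i}$, which is precisely $V(z(b);\lambda)$, so $\det(A_b)=\det(V(z(b);\lambda))$. The row-$b$ submatrix of $B$ has entries $(B^b)_{m,j}=z_{b_m}^{j-1}$, which is the transpose of $V(z(b);(0,1,\dots,k-1))$; by invariance of the determinant under transposition, $\det(B^b)=D(z(b))$. Finally I would invoke the definition of the Schur polynomial, which rearranges to $\det(V(z(b);\lambda))=S(z(b);\lambda)\,D(z(b))$. Substituting this into the Cauchy--Binet sum collapses the two Vandermonde factors into a single square, yielding
\[
\det(H(\lambda;n)) = \sum_{b\in C(k,n)} S(z(b);\lambda)\,D(z(b))\cdot D(z(b)) = \sum_{b\in C(k,n)} S(z(b);\lambda)\,D(z(b))^2.
\]

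I do not anticipate a serious obstacle: the content is a clean application of Cauchy--Binet once the factorization $H=AB$ is spotted. The only point requiring care is the index bookkeeping --- ensuring that the exponents $\lambda_i$ occupy the rows of $A$ while the staircase exponents $0,1,\dots,k-1$ occupy the rows of $B$, and that the transpose appearing in the $B$-minor is handled so that its determinant is exactly $D(z(b))$ rather than a sign- or permutation-altered version. I would also remark that the identity remains consistent at the degenerate boundary allowed by the hypothesis $\lambda_i\leq\lambda_{i+1}$: if some $\lambda_i=\lambda_{i+1}$ then $V(z(b);\lambda)$ has two equal rows and both sides vanish, in agreement with the convention that the corresponding Schur polynomial is zero.
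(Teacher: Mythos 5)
Your proof is correct, and it takes a genuinely different route from the paper. You factor $H(\lambda;n)=AB$ with $A_{i,\ell}=z_\ell^{\lambda_i}$ and $B_{\ell,j}=z_\ell^{j-1}$, apply Cauchy--Binet, and identify the two $k\times k$ minors as $\det(V(z(b);\lambda))=S(z(b);\lambda)D(z(b))$ and $\det(B^b)=D(z(b))$; all the index bookkeeping checks out against the paper's conventions ($V_{i,j}=x_j^{\lambda_i}$, so $A_b$ is literally $V(z(b);\lambda)$ and $B^b$ is its staircase transpose). The paper instead argues by induction on $k$: it expands $\det(H(\lambda;n))$ along the rightmost column, reassembles the resulting alternating sum into a $(k+1)$-variable Schur polynomial, and reduces the induction step to the identity $\sum_{l}(-1)^{l-1}x_l^{k}\prod_{i<j;\,i,j\neq l}(x_i-x_j)=\prod_{i<j}(x_i-x_j)$, which it proves by a combinatorial cancellation argument on directed graphs (reversing a minimal $3$-cycle). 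Your route is shorter and leans on a standard theorem, which is arguably the classical way to handle Hermite-type matrices; what the paper's longer route buys is self-containedness and, more importantly, the graph-theoretic Lemma 1, which is not throwaway machinery: it is cited again later (e.g.\ to write $D(b)^2$ as a square of an alternating sum of the $d(\hat{b}_i)$) and its directed-graph cancellation technique is the template for the entire Section 5 analysis. Your closing remark about the degenerate case $\lambda_i=\lambda_{i+1}$ (both sides vanish) is a nice touch that the paper leaves implicit.
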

\begin{proof}
We use induction on $k$. The statement is true for $k=1$ because 
\[
D(z_i) = 1, \,\,\,\,S(z_i; (\lambda_1) ) = z_i^{\lambda_1}, \,\,\,\,\text{ and } \,\,\,\,\,\det(H(\lambda; n))= p_{\lambda_1}.
\]
Assume it is true for some $k\geq 1$. Let $\lambda = (\lambda_1, ..., \lambda_{k+1})$. Then we calculate 
\[
\det(H(\lambda; n))
\] 
by expanding along the rightmost column of the matrix $H(\lambda; n)$. Let $\hat{\lambda}_i$ denote the $k$-tuple 
 \[
 \hat{\lambda}_i = (\lambda_1, ..., \lambda_{i-1}, \lambda_{i+1}, ..., \lambda_{k+1}).
 \] 
Then 
\begin{align*} 
\det(H(\lambda; n)) &= \sum_{i=1}^{k+1}(-1)^{k+i}\det(H(\hat{\lambda}_{i}; n)) p_{\lambda_i+k} \\ 
 & = \sum_{b \in C(k,n)} D(z(b))^2 \sum_{i=1}^{k+1} (-1)^{k+i}   \frac{\det(V(z(b); \hat{\lambda}_i))}{ D(z(b))} p_{\lambda_{i}+k}
\end{align*}
by the induction hypothesis. Using the definition of $p_{\lambda_{i}+k}$ we re-write the last line of the above equation as 
\begin{align*}
&=  \sum_{b \in C(k,n)} D(z(b))^2  \sum_{i=1}^{k+1} (-1)^{k+i}   \frac{\det(V(z(b); \hat{\lambda}_i))}{ D(z(b))} \sum_{j=1}^n z_j^{\lambda_i+k}\\ 
&=  \sum_{b \in C(k,n)} D(z(b))^2 \sum_{j=1}^n z_j^k \sum_{i=1}^{k+1} (-1)^{k+i} \frac{\det(V(z(b); \hat{\lambda}_i))}{ D(z(b))} z_j^{\lambda_i}\\ 
&=  \sum_{b \in C(k,n)} D(z(b))^2 \sum_{j=1}^n z_j^k  \frac{\det(V((z(b),z_j); \lambda))}{ D(z(b))} 
\end{align*}
where $(z(b),z_j)$ denotes the $(k+1)$-tuple 
\[
(z(b),z_j) = (z_{b_1}, z_{b_2}, ..., z_{b_k}, z_j). 
\]
Continuing, we apply the definition of the Schur polynomial and obtain
\begin{align*}
&= \sum_{b \in C(k,n)} D(z(b))^2 \sum_{j=1}^n z_j^k  \frac{S((z(b),z_j);\lambda) D(z(b),z_j) }{ D(z(b))}.\\ 
\end{align*}
To the above expression we apply
\[
\frac{D(z(b),z_j) }{ D(z(b))} =  \prod_{i=1}^k (z_j-z_{b_i}),
\]
which yields
\begin{align*}
&  \sum_{b \in C(k,n)} D(z(b))^2 \sum_{j=1}^n z_j^k  S((z(b),z_j);\lambda)  \prod_{i=1}^k (z_j-z_{b_i})\\  
&= \sum_{b' \in C(k+1,n)}  S(z(b');\lambda) \sum_{i=1}^{k+1}  z_{b_i'}^kD(z(\hat{b'}_i))^2\prod_{l=1, \neq i}^{k+1} (z_{b_i'}-z_{b_l'}).
\end{align*}

Therefore we must show that for $b\in C(k+1,n)$ 
\[
\sum_{i=1}^{k+1}  z_{b_i}^kD(z(\hat{b}_i))^2\prod_{l=1, \neq i}^{k+1} (z_{b_i}-z_{b_l}) = D(z(b))^2
\]
This follows from Lemma \ref{identity} and completes the proof.
\end{proof}

\begin{lemma}\label{identity}
For integer $k\geq1$ and indeterminates $x_1, ..., x_{k+1}$,
\[
\sum_{l=1}^{k+1} (-1)^{l-1} x_l^{k} \prod_{1\leq i < j \leq k; i,j \neq l } (x_i - x_j)  = \prod_{1\leq i < j \leq k+1 } (x_i - x_j).  
\]
\end{lemma}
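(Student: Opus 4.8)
The plan is to recognize the left-hand side as the Laplace (cofactor) expansion of a Vandermonde determinant along its bottom row, so that the identity becomes a restatement of the Vandermonde evaluation once the sign conventions are reconciled. Before starting I would note that the upper index on the inner product is meant to read $k+1$ rather than $k$: for each $l$ the product should run over all pairs drawn from $\{1,\dots,k+1\}\setminus\{l\}$, which is the only reading making the two sides homogeneous of the same degree, namely $\binom{k+1}{2}$ (each summand on the left has degree $k+\binom{k}{2}=\binom{k+1}{2}$).

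Concretely, I would form the $(k+1)\times(k+1)$ Vandermonde matrix $V(x_1,\dots,x_{k+1};(0,1,\dots,k))$, whose determinant is $D(x_1,\dots,x_{k+1})=\prod_{1\le i<j\le k+1}(x_j-x_i)$ by the definition in the excerpt. Expanding this determinant along its last row, whose entries are $x_c^{\,k}$, the minor obtained by deleting the last row and the $c$-th column is exactly the $k\times k$ Vandermonde in the variables with $x_c$ removed, hence equals $D(\hat{x}_c)$, where $\hat{x}_c=(x_1,\dots,x_{c-1},x_{c+1},\dots,x_{k+1})$. Since the cofactor sign is $(-1)^{(k+1)+c}$, the expansion reads
\[
D(x_1,\dots,x_{k+1}) \;=\; \sum_{c=1}^{k+1} (-1)^{k+1+c}\, x_c^{\,k}\, D(\hat{x}_c),
\]
which is already the desired identity up to the choice of sign convention in the products.

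It then remains to pass from the convention $D(\,\cdot\,)=\prod_{i<j}(x_j-x_i)$ used above to the convention $\prod_{i<j}(x_i-x_j)$ appearing in the statement. Writing $\prod_{1\le i<j\le m}(x_i-x_j)=(-1)^{\binom{m}{2}}\prod_{1\le i<j\le m}(x_j-x_i)$ and applying this with $m=k$ on each summand and with $m=k+1$ on the right-hand side, the two global signs differ by $(-1)^{\binom{k}{2}-\binom{k+1}{2}}=(-1)^{-k}=(-1)^{k}$; absorbing this into $(-1)^{k+1+c}$ turns the cofactor sign into $(-1)^{2k+1+c}=(-1)^{c-1}$, matching the factor $(-1)^{l-1}$ in the statement after setting $c=l$. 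The main (indeed only) obstacle is precisely this sign bookkeeping: one must keep track of the parity $(-1)^{\binom{m}{2}}$ introduced by reversing each difference together with the Laplace cofactor sign, the whole thing hinging on the telescoping relation $\binom{k+1}{2}=\binom{k}{2}+k$; no genuine computation is required beyond this.

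As an alternative that avoids the matrix picture, I could instead observe that the left-hand side is an alternating polynomial in $x_1,\dots,x_{k+1}$, homogeneous of degree $\binom{k+1}{2}$; any such polynomial is divisible by the Vandermonde product $\prod_{i<j}(x_i-x_j)$, and since the quotient then has degree $0$ it is a constant. Comparing the coefficient of a single monomial, for instance $x_1^{\,k}x_2^{\,k-1}\cdots x_{k+1}^{\,0}$, which arises only from the $l=1$ term and with coefficient $+1$ on each side, pins the constant down to $1$ and finishes the proof.
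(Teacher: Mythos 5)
Your proof is correct, and your repair of the statement's typo is the right one: the inner product must run over all pairs from $\{1,\dots,k+1\}\setminus\{l\}$, as forced by homogeneity and by the way the lemma is invoked in the theorem preceding it. Your route, however, is genuinely different from the paper's. The paper never touches the Vandermonde matrix here: it expands the right-hand product $\prod_{1\le i<j\le k+1}(x_i-x_j)$ monomial by monomial, encodes each monomial as a tournament on the vertices $\{1,\dots,k+1\}$ (the edge $(i,j)$ records that $x_i$ was chosen from the factor $(x_i-x_j)$), proves by induction that every tournament either has a vertex with all outgoing edges or contains a $3$-cycle, and then cancels all monomials of the latter kind in pairs by reversing the lexicographically smallest $3$-cycle --- a sign-reversing involution; the surviving monomials, those whose tournament has a source vertex $l$, sum exactly to the term $(-1)^{l-1}x_l^{k}\prod_{i<j;\,i,j\ne l}(x_i-x_j)$. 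Your argument --- cofactor expansion of the $(k+1)\times(k+1)$ Vandermonde determinant along the row of $k$-th powers, plus the $(-1)^{\binom{m}{2}}$ sign bookkeeping, which you carry out correctly --- is much shorter, but it leans on the product evaluation of $\det V$, which the paper records in its definition of $D$; in fact the paper itself points to this same cofactor-expansion idea as an alternative proof of the closely related identity $D(b)^2=\bigl(\sum_i(-1)^{k+1-i}d(\hat b_i)\bigr)^2$ in Section \ref{equivalence}. What the paper's longer combinatorial proof buys is the machinery itself: the directed-graph encoding and the cycle-reversal involution introduced here are reused in Lemma \ref{no cycles} and in the counting arguments of Section \ref{equivalence}, so establishing them at this early stage is not wasted effort. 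One caution on your second, divisibility-based sketch: the claim that the left-hand side is alternating is not free --- it is most transparently seen from the determinant representation, so as written that alternative partially presupposes your first argument; if you want it to stand alone, verify alternation directly (under a transposition of $x_a$ and $x_b$, the terms $l\ne a,b$ flip sign because their full pairwise products do, and the terms $l=a$ and $l=b$ exchange with the correct sign after counting the $|a-b|-1$ transpositions needed to restore the natural order).
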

\begin{proof}
Expand the product on the right side of the lemma statement into monomial terms, treating the $x_i$ as non-commuting variables.  Each such term $m$ is indexed by a set $E(m)$ of $\frac{k(k+1)}{2}$ ordered pairs: if in the factor 
\[
(x_i-x_j)
\]
the $x_i$ contributes to $m$, then let $(i,j) \in E(m)$; otherwise $(j,i) \in E(m)$. Thus each $m$ corresponds to a directed graph $G(m)$ whose vertex set is 
\[
V = \{ 1, 2, ..., k+1\}
\]
and whose edge set is $E(m)$. We say that $(i,j)$ is an outgoing edge from the vertex $i$ and an incoming edge to the vertex $j$.

We claim that for every such $G$, either there is some vertex in $G(m)$ with all outgoing edges or there is a 3-cycle in $G(m)$. A ``cycle" means a directed cycle and a 3-cycle is a cycle with exactly 3 edges. We use induction on $k$. This statement is true for $k=1,2$. Assume it is true for some $k \geq 2$. Note in $G(m)$ every vertex has exactly $k$ edges.

We first show that cycles exist in $G(m)$ if there is no vertex with all outgoing edges. Suppose there is no vertex in $G(m)$ with all outgoing edges. If there exists a vertex $v$ with all incoming edges, then by the induction hypothesis, the subgraph $G(m) \backslash v$ has either a vertex $v'$ with all outgoing edges or a 3-cycle. If there isa 3-cycle, then we are done. Therefore assume there is such a $v'$ with all outgoing edges in $G(m)\backslash v$. Then since $v$ has all incoming edges by assumption, we have the edge $(v',v)$ in $G(m)$. Thus $v'$ has all outgoing edges in $G(m)$, contradicting the assumption of no such vertex in $G(m)$. Therefore if $G(m)$ has no vertex with all outgoing edges, then it has no vertex with all incoming edges.


Thus we can assume that no vertex in $G(m)$ has all outgoing edges and that every vertex has at least one incoming and at least one outgoing edge. Therefore there exists some cycle $C$ in $G(m)$ with at least 3 edges. Let $C$ consist of the vertices $v_1, ..., v_n$ with edges $(v_i, v_{i+1})$ and $(v_n,v_1)$. Then one of the triples $\{v_1, v_i, v_{i+1}\}$ for $2\leq i\leq n-1$
must constitute a 3-cylce. For if none of these triples were a 3-cylce, then that means we would have to have the directed edges $(v_1, v_i)$ for $2\leq i\leq n-1$. But then we have the triple $\{v_1, v_{n-1},v_n\}$ which would then be a 3-cycle. This proves the induction step.

Now we can prove the lemma. Allowing the $x_i$ to commute, the term 
\[
 (-1)^{l-1} x_l^{k} \prod_{1\leq i < j \leq k; i,j \neq l } (x_i - x_j)
\]
is the sum of all monomials whose associated graphs have the vertex $l$ with all outgoing edges. For a monomial $m$ such that $G(m)$ does not have a vertex with all outgoing edges, then we know from above that $G(m)$ has a 3-cycle. Let $C(m)$ be the 3-cycle whose triple of vertices $\{i,j,k\}$ is the smallest of all 3-cycles in $G(m)$ in the lexicographic ordering. Let $m'$ denote the monomial whose graph $G(m')$ is the same as $G(m)$ except that the cycle $C(m)$ has its three edges reversed. Then the monomial $m'$ has opposite sign to that of $m$. This bijection $m\mapsto m'$ shows that all such monomial terms cancel. This proves the lemma. 

\end{proof}

\section{The matrix $E(n)$ } \label{E}
We define the matrix $E(n)$. To define the entries of $E(n)$, we first define functions $M(m_1,m_2;n)$.

\subsection{The functions $M( m_1,m_2,n)$} \label{M}

\begin{definition}
Let $P(k,n)$ denote the set of all $k$-tuples $b$ 
\[
b = (b_1, ..., b_k)
\]
such that $ b_i \neq b_j$ for $i \neq j$ and $b_i \in \{1,...,n\}$.
For a finite non-increasing sequence of non-negative integers $d=\{ d_1, d_2, ..., d_l\}$, define 
\[
\mathrm{monomial}(d,n) = \sum_{b \in P(l,n)} \prod_{i=1}^l z_{b_i}^{d_i}.
\]
Define
\[
\mathrm{monomial}_2(m_2, m_1,n) =\frac{\mathrm{monomial}(\{2,2,...,2,1,1,...,1 \},n)}{m_2! m_1!}
\]
where there are $m_2$ 2's and $m_1$ 1's.  
\end{definition}
 \begin{lemma}\label{ememk formula}
 For integers $m,k \geq 0$,
\[
e_me_{m+k} = \sum_{i=0}^m {k+2i \choose i} \mathrm{monomial}_2(m-i,k+2i;n).
\]
\end{lemma}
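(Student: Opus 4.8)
The plan is to prove the identity by expanding both sides into monomials in the $z_i$ and matching coefficients by a direct counting argument. The first thing I would record is the combinatorial meaning of $\mathrm{monomial}_2$. Directly from the definition, in $\mathrm{monomial}(\{2,\dots,2,1,\dots,1\},n)$ each unordered choice of a squared-variable set $S$ (with $|S|=m_2$) together with a disjoint linear-variable set $T$ (with $|T|=m_1$) is produced by exactly $m_2!\,m_1!$ ordered tuples $b\in P(m_2+m_1,n)$, since permuting the $m_2$ positions carrying exponent $2$ and the $m_1$ positions carrying exponent $1$ leaves the monomial unchanged. Hence
\[
\mathrm{monomial}_2(m_2,m_1,n)=\sum_{\substack{S,T\subseteq\{1,\dots,n\}\\ S\cap T=\emptyset\\ |S|=m_2,\ |T|=m_1}}\ \prod_{s\in S}z_s^2\prod_{t\in T}z_t,
\]
so that $\mathrm{monomial}_2(m_2,m_1,n)$ is the monomial symmetric function indexed by the partition with $m_2$ parts equal to $2$ and $m_1$ parts equal to $1$.

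Next I would expand the left-hand side over $m$-subsets $A$ and $(m+k)$-subsets $B$ as
\[
e_m e_{m+k}=\sum_{\substack{A,B\subseteq\{1,\dots,n\}\\ |A|=m,\ |B|=m+k}}\ \prod_{a\in A}z_a\prod_{b\in B}z_b,
\]
and observe that for each pair $(A,B)$ the resulting monomial is $\prod_{v\in A\cap B}z_v^2\prod_{v\in A\triangle B}z_v$: the indices in $A\cap B$ acquire exponent $2$, those in the symmetric difference $A\triangle B$ acquire exponent $1$, and all others exponent $0$. Thus every monomial occurring on the left has the shape counted by some $\mathrm{monomial}_2$, with squared set $S=A\cap B$ and linear set $T=A\triangle B$.

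The key step is to group the pairs $(A,B)$ by $j:=|A\cap B|$ and count multiplicities. Since $A\cap B\subseteq A$ we have $0\le j\le m$; writing $i=m-j$ gives the summation index of the statement, ranging from $0$ to $m$. For fixed $j$ there are $|S|=j=m-i$ squared variables and $|T|=|A\setminus B|+|B\setminus A|=(m-j)+(m+k-j)=k+2i$ linear variables, which is exactly the shape $\mathrm{monomial}_2(m-i,k+2i,n)$. To determine how many pairs $(A,B)$ yield one fixed monomial of this shape, note that $S=A\cap B$ and $T=A\triangle B$ are forced by the monomial, and reconstructing $(A,B)$ amounts to choosing which $m-j=i$ elements of $T$ lie in $A\setminus B$ (the remaining $m+k-j=i+k$ elements then forming $B\setminus A$). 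There are $\binom{k+2i}{i}$ such choices, and this count is independent of the particular monomial. Summing over all monomials of each shape and then over $i$ reproduces the right-hand side, completing the argument.

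The main obstacle I anticipate is purely bookkeeping rather than conceptual: aligning the two natural indexings (the intersection size $j$ arising in the expansion versus the index $i=m-j$ used in the statement), confirming that the multiplicity equals the stated coefficient via $\binom{2m+k-2j}{m-j}=\binom{k+2i}{i}$, and checking the boundary behaviour so that no degenerate terms are miscounted; here one verifies that $k+2i\ge k\ge 0$ and $m-i=j\ge 0$ hold throughout the range, so every shape is legitimate and the division by $m_2!\,m_1!$ in the definition of $\mathrm{monomial}_2$ is exactly what converts the ordered expansion into the unordered count above.
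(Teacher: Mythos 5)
Your proof is correct and follows essentially the same route as the paper's: both expand $e_m e_{m+k}$ into monomials, classify the resulting terms by the size $m-i$ of the overlap $A\cap B$ of the two index sets, and count $\binom{k+2i}{i}$ reconstructions of a fixed monomial by choosing which non-overlapping indices come from $e_m$. Your write-up is merely more explicit than the paper's about the unordered-versus-ordered bookkeeping hidden in the $m_2!\,m_1!$ normalization of $\mathrm{monomial}_2$, which is a welcome clarification rather than a different argument.
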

\begin{proof}
As a function of the $z_i$, 
\begin{equation} \label{em em+k}
e_me_{m+k}
\end{equation}
is a symmetric polynomial. Since each $z_i$ appears with exponent at most 1 in each elementary-symmetric function, any product of two elementary symmetric functions is then a linear combination of the functions $\mathrm{monomial}_2(m_1,m_2;n)$. Expanding the product \eqref{em em+k} into monomials, we get terms of the form 
\[
z_{j_1}^2...z_{j_{m-i}}^2 z_{l_1}...z_{k+2i}
\] 
where $0 \leq i \leq m$; to see this, one $z_{j_h}$ factor comes from $e_m$ and another $z_{j_h}$ factor comes from $e_{m+k}$. That is, the term from $e_{m}$ and the term from $e_{m+k}$ overlap in $m-i$ indeterminates. Then the remaining indeterminates coming from $e_{m}$ are distinct from the remaining ones coming from $e_{m+k}(z;n)$. The total number of these indeterminates that do not overlap is 
\[
i+ (m+k - (m-i)) = k+2i. 
\] 
and $i$ of these indeterminates come from $e_{m}$. Thus there are $\displaystyle {k+2i \choose i}$ ways to make such a product of two terms. This proves the lemma.
\end{proof}
 \begin{lemma}\label{mono2 as e}
 For integers $m,k \geq 0$, 
  \[
\mathrm{monomial}_2(m,k,n) = \sum_{i=0}^m (-1)^i \frac{k+2i}{i}{k+i-1\choose i-1} e_{m-i}e_{m+k+i}
\]
 \end{lemma}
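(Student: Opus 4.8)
The plan is to recognize this identity as the inverse of Lemma \ref{ememk formula} and to prove it cleanly through a two-variable generating function rather than by grinding out a binomial-coefficient inversion. First I would record the bivariate generating function of the $\mathrm{monomial}_2$ functions. Directly from the definition, deciding for each $z_i$ whether it is absent, enters with exponent $1$, or enters with exponent $2$ shows
\[
\Phi(s,t) := \prod_{i=1}^n \left(1 + z_i t + z_i^2 s\right) = \sum_{m,k \geq 0} \mathrm{monomial}_2(m,k,n)\, s^m t^k .
\]
The key structural observation is that each quadratic factor splits: letting $\alpha,\beta$ be the roots of $w^2 - tw + s$, so that $\alpha+\beta=t$ and $\alpha\beta=s$, we have $1+z_it+z_i^2s=(1+z_i\alpha)(1+z_i\beta)$. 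Writing $\mathcal{E}(w)=\sum_k e_k w^k=\prod_i(1+z_iw)$, this gives $\Phi(s,t)=\mathcal{E}(\alpha)\mathcal{E}(\beta)$, which is the bridge between the two families of symmetric functions.

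Next I would expand and symmetrize. Since $\mathcal{E}(\alpha)\mathcal{E}(\beta)=\sum_{a,b\geq0}e_ae_b\,\alpha^a\beta^b$ is symmetric under $\alpha\leftrightarrow\beta$, it is a genuine polynomial in $t$ and $s$, and grouping unordered pairs gives
\[
\mathcal{E}(\alpha)\mathcal{E}(\beta)=\sum_{a\geq0}e_a^2 s^a+\sum_{0\leq a<b}e_ae_b\,s^a\left(\alpha^{\,b-a}+\beta^{\,b-a}\right).
\]
I would then invoke the Girard--Waring formula for power sums in two variables,
\[
\alpha^{\,j}+\beta^{\,j}=\sum_{i\geq0}(-1)^i\frac{j}{j-i}\binom{j-i}{i}t^{\,j-2i}s^{\,i},
\]
which is classical and can be supplied in one line by induction from the recurrence $\alpha^j+\beta^j=t(\alpha^{j-1}+\beta^{j-1})-s(\alpha^{j-2}+\beta^{j-2})$. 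Substituting $j=b-a$ and reading off the coefficient of $t^k s^m$ forces $a=m-i$ and $b=m+k+i$, hence $j=k+2i$, so that coefficient equals
\[
\sum_{i=0}^{m}(-1)^i\frac{k+2i}{k+i}\binom{k+i}{i}\,e_{m-i}e_{m+k+i}.
\]
A short factorial simplification shows $\frac{k+2i}{k+i}\binom{k+i}{i}=\frac{k+2i}{i}\binom{k+i-1}{i-1}$, exactly the stated coefficient; in particular the $i=0$ term has coefficient $1$, the correct reading of the indeterminate-looking $\frac{k}{0}\binom{k-1}{-1}$.

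Comparing this with the coefficient of $s^m t^k$ in the first display then yields the claimed identity. The main obstacle I anticipate is purely bookkeeping at the boundary: the diagonal terms $e_a^2 s^a$ carry no power of $t$, so they affect only the case $k=0$, and there one must check that the diagonal contribution $e_m^2$ matches the convention assigning coefficient $1$ to the $i=0$ term, while the off-diagonal terms ($i\geq1$, of coefficient $\frac{2i}{i}\binom{i}{i}=2$) fill in the rest. Once the $\alpha\leftrightarrow\beta$ symmetrization and this $k=0$ edge case are handled carefully, the identity follows directly. Alternatively, one could substitute Lemma \ref{ememk formula} into the claimed right-hand side and verify the single binomial identity $\sum_{i=0}^s(-1)^i\frac{k+2i}{k+i}\binom{k+i}{i}\binom{k+2s}{s-i}=0$ for $s\geq1$, but the generating-function route avoids isolating that identity and makes the inverse relationship transparent.
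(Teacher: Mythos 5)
Your proof is correct, and it takes a genuinely different route from the paper's. The paper proves this lemma by inversion-plus-verification: it treats Lemma \ref{ememk formula} as a triangular system, so that $\mathrm{monomial}_2(m,k,n)=\sum_{i=0}^m c_i(k)\,e_{m-i}e_{m+k+i}$ with $c_0(k)=1$ and $c_i(k)=-\sum_{j=1}^i\binom{k+2j}{j}c_{i-j}(k+2j)$, and then confirms the closed form $c_i(k)=(-1)^i\frac{k+2i}{i}\binom{k+i-1}{i-1}$ by induction on $i$, the induction step being the vanishing of a sum which the paper recognizes as $(k+2i+2)\frac{1}{i!}\left(\frac{d}{dt}\right)^i t^{k+i}(1-t)^{i+1}\big|_{t=1}=0$. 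You instead derive the coefficients from scratch: the factorization $1+z_i t+z_i^2 s=(1+z_i\alpha)(1+z_i\beta)$, with $\alpha+\beta=t$ and $\alpha\beta=s$, identifies the bivariate generating function of the $\mathrm{monomial}_2$ functions with $\mathcal{E}(\alpha)\mathcal{E}(\beta)$, and the two-variable Girard--Waring formula then produces the coefficient $\frac{k+2i}{k+i}\binom{k+i}{i}$, which you correctly simplify to the stated $\frac{k+2i}{i}\binom{k+i-1}{i-1}$ for $i\geq 1$ and correctly read as $1$ at $i=0$ (the paper's formula is formally indeterminate there and is likewise meant as $1$); your bookkeeping of the $k=0$ diagonal terms is also exactly the check that needs to be made. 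What your route buys: the formula is derived rather than guessed and verified, the only external input is the power-sum recurrence $\alpha^j+\beta^j=t(\alpha^{j-1}+\beta^{j-1})-s(\alpha^{j-2}+\beta^{j-2})$, and the inverse relationship between the two lemmas becomes transparent, since Lemma \ref{ememk formula} itself is just the opposite coefficient extraction from the same product. What the paper's route buys: it stays entirely inside the symmetric-function manipulations already set up, needs no passage to the roots $\alpha,\beta$ (nor the appeal to symmetry under $\alpha\leftrightarrow\beta$ to see that everything is polynomial in $s,t$), and exhibits the lemma explicitly as the inversion of the system that is reused later, e.g.\ in Lemma \ref{E as M}.
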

\begin{proof} 
Solving for $\mathrm{monomial}_2(m,k)$ using the system of equations given by Lemma \ref{ememk formula} gives 
\[
\mathrm{monomial}_2(m,k,n)= \sum_{i=0}^m c_i(k) e_{m-i}e_{m+k+i}
\]
where 
\begin{align*}
c_0(k) &= 1\\
c_i(k) &=- \sum_{j=1}^i {k+2j \choose j} c_{i-j}(k+2j) \text{ for } i \geq 1.  
\end{align*}
Then for $i \geq 1$
\[
c_i(k) =(-1)^i \frac{k+2i}{i}{k+i-1\choose i-1}. 
\]
We prove this by induction on $i$. It is true for $i=1$. Assume it is true for some $i\geq 1$. Then
\begin{align*}
&(-1)^{i+1} \frac{k+2i+2}{i+1}{k+i\choose i}+\sum_{j=1}^{i+1} {k+2j \choose j} c_{i+1-j}(k+2j) \\ 
=& (k+2i+2)\sum_{j=0}^{i+1} (-1)^j\frac{(k+2i-1-j)_i}{j!(i+1-j)!}\\ 
=& (k+2i+2)\frac{1}{i!} (\frac{d}{dt})^i t^{k+i}(1-t)^{i+1} |_{t=1}\\ 
=&0.
\end{align*}
This proves the induction step and the lemma.
\end{proof}

Now we define the functions $M(m_2,m_1, n)$.
\begin{definition}

Let $d_1$ denote the sequence 
\[
d_1 = \{2,2,...,2,1,1,...,1, 0\}
\]
where there are $m_2$ 2's and $m_1$ 1's. Let $d_2$ denote the sequence 
\[
d_2 = \{2,2,...,2,1,1,...,1\}
\]
where there are $m_2-1$ 2's and $m_1+2$ 1's.
Define 
\[
M(m_2,m_1, n) = \mathrm{monomial}(d_1,n) -  \mathrm{monomial}(d_2,n).
\]
\end{definition}
\begin{lemma} 
\begin{align*}
M(m_2,m_1,n) =&  m_2! m_1!(n-m_1-m_2)\sum_{i=0}^{m_2}(-1)^i \frac{(m_1+2i)(m_1+i-1)!}{i!m_1!}e_{m_2-i}e_{m_2+m_1+i} \\ 
&- (m_2-1)! (m_1+2)! \sum_{i=0}^{m_2-1}(-1)^i \frac{(m_1+2+2i)(m_1+i+1)!}{i!(m_1+2)!}e_{m_2-1-i}e_{m_2+m_1+1+i}
\end{align*}
\end{lemma}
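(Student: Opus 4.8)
The plan is to unfold the definition $M(m_2,m_1,n)=\mathrm{monomial}(d_1,n)-\mathrm{monomial}(d_2,n)$, rewrite each $\mathrm{monomial}$ in terms of the normalized functions $\mathrm{monomial}_2$, and then substitute the expansion of Lemma \ref{mono2 as e} to land in the $e_k$. The genuine content is the first reduction; everything afterward is coefficient bookkeeping.

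First I would handle the combinatorial normalization. The sequence $d_1$ consists of $m_2$ twos, $m_1$ ones, and a single zero, so it has length $m_1+m_2+1$. In $\mathrm{monomial}(d_1,n)$ the index carrying the zero exponent contributes $z^0=1$ to every monomial and is only constrained to be distinct from the other $m_1+m_2$ chosen indices. Summing over the $n-m_1-m_2$ admissible values for that free index, and recalling that permuting the $m_2$ squared indices and the $m_1$ linear indices among themselves reproduces the same monomial (the source of the $m_2!\,m_1!$ normalization in the definition of $\mathrm{monomial}_2$), I obtain
\[
\mathrm{monomial}(d_1,n) = (n-m_1-m_2)\, m_2!\, m_1!\, \mathrm{monomial}_2(m_2,m_1,n).
\]
The sequence $d_2$ has $m_2-1$ twos and $m_1+2$ ones, again of length $m_1+m_2+1$, but no zero entry; the identical permutation argument gives
\[
\mathrm{monomial}(d_2,n) = (m_2-1)!\,(m_1+2)!\,\mathrm{monomial}_2(m_2-1,m_1+2,n),
\]
with no free-index factor.

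Next I would substitute Lemma \ref{mono2 as e} into each term, applying it with parameters $(m,k)=(m_2,m_1)$ to the first monomial and $(m,k)=(m_2-1,m_1+2)$ to the second. This produces exactly the two sums $\sum_{i=0}^{m_2}(\cdots)\,e_{m_2-i}e_{m_2+m_1+i}$ and $\sum_{i=0}^{m_2-1}(\cdots)\,e_{m_2-1-i}e_{m_2+m_1+1+i}$ in the statement, since $(m_2-1)+(m_1+2)=m_2+m_1+1$. What then remains is to rewrite the coefficient $\tfrac{k+2i}{i}\binom{k+i-1}{i-1}$ of Lemma \ref{mono2 as e} in the factorial form displayed here: expanding the binomial yields $\tfrac{k+2i}{i}\cdot\tfrac{(k+i-1)!}{(i-1)!\,k!}=\tfrac{(k+2i)(k+i-1)!}{i!\,k!}$, which is precisely the coefficient written for each sum after setting $k=m_1$ and $k=m_1+2$ respectively. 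Carrying the prefactors $(n-m_1-m_2)m_2!m_1!$ and $-(m_2-1)!(m_1+2)!$ through then reproduces the claimed formula verbatim.

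The only point needing care, which I would flag as the main bookkeeping obstacle, is the $i=0$ term. Lemma \ref{mono2 as e} rests on the convention $c_0(k)=1$, whereas the factorial expression $\tfrac{(k+2i)(k+i-1)!}{i!\,k!}$ is written uniformly over $0\le i\le m$. I would check that at $i=0$ this expression collapses to $\tfrac{k\,(k-1)!}{0!\,k!}=1$ whenever $k\ge 1$, so the uniform sum agrees with the earlier convention and no separate $i=0$ clause is needed; the boundary case $k=0$ in the first sum is simply read off as $c_0=1$ directly. Once this consistency is confirmed, assembling the two substituted sums gives the result.
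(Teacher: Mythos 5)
Your proposal is correct and matches the paper's proof, which is stated in one line as "This follows from applying the definitions and Lemma \ref{mono2 as e}"; your write-up simply makes explicit the intended steps (the $(n-m_1-m_2)\,m_2!\,m_1!$ and $(m_2-1)!\,(m_1+2)!$ normalizations, the two substitutions of Lemma \ref{mono2 as e}, and the coefficient identity $\tfrac{k+2i}{i}\binom{k+i-1}{i-1}=\tfrac{(k+2i)(k+i-1)!}{i!\,k!}$). Your handling of the $i=0$ convention is also the right reading of the paper's $c_0(k)=1$.
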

\begin{proof}
This follows from applying the definitions and Lemma \ref{mono2 as e}.  
\end{proof}
\begin{definition} For a finite non-increasing sequence of non-negative integers $d=\{ d_1, d_2, ..., d_l\}$, define 
\[
\mathrm{monomial}_{\mathrm{inc}}(d,i,j;n) = \sum_{b \in P(l,n); i,j \notin b} \prod_{h=1}^l z_{b_h}^{d_h}.
\]
Define incomplete elementary-symmetric functions 
\[
e_{\mathrm{inc}}(k; i;n) = \sum_{1 \leq l_1<...<l_k\leq n; l_h\neq i }   z_{l_1}...z_{l_k} = e(z;k;n)|_{z_i=0}
\]
and
\[
e_{\mathrm{inc}}(k; i,j;n) = \sum_{1 \leq l_1<...<l_k\leq n; l_h\neq i,j }   z_{l_1}...z_{l_k} = e(z;k;n)|_{z_i=0, z_j=0}
\]

That is, $\mathrm{monomial}_{\mathrm{inc}}(d,i,j;n)$ and $e_{\mathrm{inc}}(k; i,j;n)$ are equal to $\mathrm{monomial}(d,n)$ and $e_k$ respectively, but without any terms that involve non-zero powers of $z_i$ and $z_j$.  
\end{definition} 

\begin{lemma} \label{M mono inc}
Let $d$ denote the sequence 
\[
d = \{ 2,2,...,2,1,1,...,1\}
\]
where there are $m_2-1$ 2's and $m_1$ 1's. Then
\[
M(m_2,m_1, n)  = \sum_{1 \leq i < j \leq n} \mathrm{monomial}_{\mathrm{inc}}(d,i,j;n)(z_i - z_j)^2.
\]
\end{lemma}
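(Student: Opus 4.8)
The plan is to expand $(z_i-z_j)^2 = z_i^2 - 2 z_i z_j + z_j^2$ inside the sum on the right-hand side and to match the two resulting pieces against the two monomial sums $\mathrm{monomial}(d_1,n)$ and $\mathrm{monomial}(d_2,n)$ that define $M(m_2,m_1,n)$. A typical term of $\mathrm{monomial}_{\mathrm{inc}}(d,i,j;n)$ is a product with $m_2-1$ squared variables and $m_1$ linear variables, all distinct from $i$ and $j$. Multiplying it by $z_i^2$ or $z_j^2$ produces a monomial with $m_2$ squared and $m_1$ linear variables, while multiplying by $-2z_iz_j$ produces one with $m_2-1$ squared and $m_1+2$ linear variables. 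These two types have the same total degree $2m_2+m_1$ but are distinguishable by the number of variables carrying exponent $2$, so the right-hand side separates cleanly into a ``square part'' and a ``cross part'' with no interference between them. It then suffices to show that the square part equals $\mathrm{monomial}(d_1,n)$ and the cross part equals $-\mathrm{monomial}(d_2,n)$.

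Both equalities reduce to a multiplicity count, the key being that $\mathrm{monomial}(\cdot,n)$ sums over \emph{ordered} tuples in $P(\cdot,n)$, so each genuinely distinct monomial is counted with a multiplicity equal to the number of orderings of its slots. For the square part I would fix a distinct monomial $\mu = z_{j_1}^2\cdots z_{j_{m_2}}^2 z_{l_1}\cdots z_{l_{m_1}}$ and count the ways it arises: the squared contributor $z_p^2$ must be one of the $m_2$ squares of $\mu$, the remaining factors must form the corresponding $\mathrm{monomial}_{\mathrm{inc}}$ term (contributing the ordering factor $(m_2-1)!\,m_1!$), and the partner index of the pair $\{i,j\}$ must avoid every variable of $\mu$, leaving $n-m_1-m_2$ admissible choices. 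This gives the multiplicity $m_2\,(n-m_1-m_2)\,(m_2-1)!\,m_1! = m_2!\,m_1!\,(n-m_1-m_2)$, which is exactly the multiplicity of $\mu$ in $\mathrm{monomial}(d_1,n)$; here the free partner index plays the role of the exponent-$0$ ``phantom'' index in $d_1$.

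For the cross part I would fix a distinct monomial $\nu$ with $m_2-1$ squared and $m_1+2$ linear variables: the pair $\{i,j\}$ must be chosen among the $m_1+2$ linear variables of $\nu$, giving ${m_1+2 \choose 2}$ choices, each accompanied by the factor $-2$ from the expansion and the ordering factor $(m_2-1)!\,m_1!$ from $\mathrm{monomial}_{\mathrm{inc}}$. The resulting coefficient $-2{m_1+2 \choose 2}(m_2-1)!\,m_1! = -(m_2-1)!\,(m_1+2)!$ matches the multiplicity of $\nu$ in $-\mathrm{monomial}(d_2,n)$. Adding the two parts yields $\mathrm{monomial}(d_1,n)-\mathrm{monomial}(d_2,n) = M(m_2,m_1,n)$.

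I expect the bookkeeping of multiplicities to be the only genuine obstacle, and within it the delicate point is the correspondence in the square part: one must verify that the ``outside'' partner index ranges over precisely the $n-m_1-m_2$ indices absent from $\mu$, so that it reproduces the factor $(n-m_1-m_2)$ carried by the exponent-$0$ entry of $d_1$. The clean separation of the right-hand side by monomial shape is what guarantees that the square and cross parts can be identified independently, and the rest is a routine matching of ordered-tuple counts.
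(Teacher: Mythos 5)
Your proof is correct, but it proceeds in the opposite direction from the paper's and by a different mechanism. The paper works from the left-hand side: it writes $M(m_2,m_1,n)$ as a difference of two sums over ordered tuples $b \in P(m_1+m_2+1,n)$ and then pairs each tuple $b$ with the tuple $b'$ obtained by swapping the entries in slot $m_2$ (exponent $2$ in $d_1$, exponent $1$ in $d_2$) and slot $m_2+m_1+1$ (exponent $0$ in $d_1$, exponent $1$ in $d_2$). Within each pair $\{b,b'\}$ the first sum contributes $z_{b_{m_2}}^2+z_{b_{m_2+m_1+1}}^2$ times a common factor and the second sum contributes $2z_{b_{m_2}}z_{b_{m_2+m_1+1}}$ times the same factor, so the difference assembles into $(z_{b_{m_2}}-z_{b_{m_2+m_1+1}})^2$ directly; re-indexing the pairs by the underlying set $\{i,j\}$ gives the right-hand side with no multiplicity bookkeeping whatsoever --- the factorials never appear. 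You instead expand the square on the right-hand side and compare coefficients of each distinct monomial, which is where all your factorial counting comes from; your counts are accurate ($m_2\,(n-m_1-m_2)\,(m_2-1)!\,m_1!$ for the square part and $-2\binom{m_1+2}{2}(m_2-1)!\,m_1!$ for the cross part do match the multiplicities in $\mathrm{monomial}(d_1,n)$ and $-\mathrm{monomial}(d_2,n)$), and the separation by the number of exponent-$2$ variables is indeed clean, so no cancellation between the two parts can occur. What the paper's involution buys is brevity and immunity from exactly the point you flag as delicate --- the free partner index ranging over the $n-m_1-m_2$ indices absent from $\mu$ is automatic there, because the swapped slot carries exponent $0$; what your route buys is a more systematic, purely enumerative verification that makes explicit why that exponent-$0$ entry of $d_1$ is the source of the factor $n-m_1-m_2$.
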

\begin{proof}
Applying the definitions we obtain
\begin{align*}
M(m_2,m_1, n) =& \sum_{b \in P(m_1+m_2+1,n)}z_{b_1}^2...z_{b_{m_2}}^2 z_{b_{m_2+1}}...z_{b_{m_2+m_1}} z_{b_{m_2+m_1+1}}^0\\
& - \sum_{b \in P(m_1+m_2+1,n)}z_{b_1}^2...z_{b_{m_2-1}}^2z_{b_{m_2}} z_{b_{m_2+1}}...z_{b_{m_2+m_1}} z_{b_{m_2+m_1+1}}. 
\end{align*}
Now we partition the set $P(m_1+m_2+1,n)$ into pairs $\{ b,b'\}$ where for any $b \in P(m_1+m_2+1,n)$, we let $b'$ be obtained from $b$ by switching the elements 
\[
b_{m_2} \text{ and } b_{m_2+m_1+1}.
\]
Then we get 
\begin{align*}
M(m_2,m_1, n) = &\sum_{\{ b, b'\}} z_{b_1}^2...z_{b_{m_2-1}}^2z_{b_{m_2+1}}...z_{b_{m_2+m_1}} (z_{b_{m_2}}^2 +z_{b_{m_2+m_1+1}}^2 - 2z_{b_{m_2}}z_{b_{m_2+m_1+1}})\\
= &\sum_{\{ b, b'\}} z_{b_1}^2...z_{b_{m_2-1}}^2z_{b_{m_2+1}}...z_{b_{m_2+m_1}} (z_{b_{m_2}}-z_{b_{m_2+m_1+1}})^2 \\ 
= & \sum_{1 \leq i < j \leq n} \sum_{b \in P(m_1+m_2-1,n); i,j\notin b} z_{b_1}^2...z_{b_{m_2-1}}^2z_{b_{m_2+1}}...z_{b_{m_2+m_1}} (z_{i}-z_{j})^2\\ 
=&  \sum_{1 \leq i < j \leq n} \mathrm{monomial}_{\mathrm{inc}}(d,i,j;n)(z_i - z_j)^2.
\end{align*}
This completes the proof.
\end{proof}

\subsection{The definition of $E(n)$} \label{definition of E}

Now we define the matrix $E(n)$. 
\begin{definition}
Define the infinite matrix $E(n)$ with entries $E(n)_{r,s}$
\[
E(n)_{r,s} =  (r-1)!(s-1)! \sum_{1 \leq i <j \leq n} e_{\mathrm{inc}}(r-1,n; i,j)e_{\mathrm{inc}}(s-1,n; i,j)(z_i - z_j)^2.
\]
\end{definition}

\begin{lemma} \label{E as M}
For integers $m,k \geq 0$ and $E(n)_{m,m+k}$ defined above,
\[
E(n)_{m+1,m+k+1}=\sum_{i=0}^m {m \choose i} \frac{(m+k)!}{(i+k)!} M(m+1-i,k+2i,n) 
\]
\end{lemma}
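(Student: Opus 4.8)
The plan is to reduce the product of incomplete elementary symmetric functions appearing in the definition of $E(n)_{m+1,m+k+1}$ to a combination of incomplete monomial functions, and then to recognize each resulting term as an $M$ via Lemma~\ref{M mono inc}. First I would observe that, for fixed $i<j$, the incomplete function $e_{\mathrm{inc}}(r-1,n;i,j) = e_{r-1}|_{z_i=0,z_j=0}$ is nothing but the ordinary elementary symmetric function of degree $r-1$ in the $n-2$ indeterminates $\{z_l : l \neq i,j\}$, and likewise $\mathrm{monomial}_{\mathrm{inc}}(d,i,j;n)$ is $\mathrm{monomial}(d,\cdot)$ in the same reduced variable set. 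Because the coefficients $\binom{k+2l}{l}$ in Lemma~\ref{ememk formula} do not depend on the number of variables, that lemma applies verbatim to these $n-2$ variables, yielding the incomplete expansion
\[
e_{\mathrm{inc}}(m,n;i,j)\,e_{\mathrm{inc}}(m+k,n;i,j) = \sum_{l=0}^m \binom{k+2l}{l}\, \frac{\mathrm{monomial}_{\mathrm{inc}}(d_l,i,j;n)}{(m-l)!\,(k+2l)!},
\]
where $d_l$ is the sequence with $m-l$ twos and $k+2l$ ones.

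Next I would substitute this expansion into
\[
E(n)_{m+1,m+k+1} = m!\,(m+k)! \sum_{1 \leq i<j\leq n} e_{\mathrm{inc}}(m,n;i,j)\,e_{\mathrm{inc}}(m+k,n;i,j)\,(z_i-z_j)^2,
\]
and interchange the two finite sums. The key matching step is that the sequence $d_l$ has exactly $m-l$ twos and $k+2l$ ones, which is precisely the sequence associated by Lemma~\ref{M mono inc} to $M(m-l+1,k+2l,n)$ (there the number of twos is $m_2-1$ and the number of ones is $m_1$). Hence
\[
\sum_{1 \leq i<j\leq n}\mathrm{monomial}_{\mathrm{inc}}(d_l,i,j;n)\,(z_i-z_j)^2 = M(m-l+1,k+2l,n),
\]
and the double sum collapses to $\sum_{l=0}^m \binom{k+2l}{l}\frac{m!\,(m+k)!}{(m-l)!\,(k+2l)!}\,M(m-l+1,k+2l,n)$.

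Finally I would simplify the scalar coefficient. Writing $\binom{k+2l}{l} = \frac{(k+2l)!}{l!\,(k+l)!}$, the factor $(k+2l)!$ cancels and the coefficient becomes $\frac{m!}{l!\,(m-l)!}\cdot\frac{(m+k)!}{(k+l)!} = \binom{m}{l}\frac{(m+k)!}{(k+l)!}$. Renaming the summation index $l$ to $i$ gives exactly the claimed identity.

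I expect essentially no serious obstacle here: the entire argument is bookkeeping once the incomplete functions are recognized as genuine symmetric functions in the reduced variable set, which is what licenses the reuse of Lemma~\ref{ememk formula}. The two points requiring care are the exact alignment of the shape $d_l$ with the sequence defining $M(m-l+1,k+2l,n)$, and in particular the off-by-one in the number of twos coming from the $m_2 \mapsto m_2-1$ shift in Lemma~\ref{M mono inc} — it is precisely this shift that produces the argument $m+1-i$ rather than $m-i$ in the final formula, so I would verify it explicitly.
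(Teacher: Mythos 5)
Your proposal is correct and follows essentially the same route as the paper: both rest on Lemma \ref{ememk formula} applied in the variable set with $z_i,z_j$ removed, Lemma \ref{M mono inc} to identify each shape $d_l$ (with $m-l$ twos and $k+2l$ ones) as $M(m+1-l,k+2l,n)$, and the same binomial simplification $\binom{k+2l}{l}\frac{m!\,(m+k)!}{(m-l)!\,(k+2l)!}=\binom{m}{l}\frac{(m+k)!}{(k+l)!}$. The only difference is organizational: the paper argues backwards (``it suffices to prove'' the identity for ordinary symmetric functions) while you substitute forwards, and you make explicit the observation—left implicit in the paper—that the incomplete functions are genuine symmetric functions in $n-2$ variables, which is what licenses reusing Lemma \ref{ememk formula}.
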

\begin{proof}
By Lemma \ref{M mono inc}, it is sufficient to prove 
\[
\sum_{i=0}^m {m \choose i} \frac{(m+k)!}{(i+k)!} (m-i)!(k+2i)! \mathrm{monomial}_2(m-i,k+2i,n) = (m!)(m+k)!e_me_{m+k}.
\]
 By Lemma \ref{ememk formula}, the coefficient of $\mathrm{monomial}_2(m-i,k+2i,n)$ in $e_m e_{m+k}$ is $\displaystyle { k+2i\choose i} $.
Then 
\[
\frac{m!(m+k)!}{(m-i)!(k+2i)!}{ k+2i\choose i} =  {m \choose i} \frac{(m+k)!}{(i+k)!}.
\]
This completes the proof.
\end{proof}

\begin{theorem}

\[
E(n)_{m+1,m+k+1}(n) = m!k!(n-m-k)e_m e_{m+k}+ n \sum_{i=0}^{m-1} A_i e_i e_{2m+k-i} 
\]
for some numbers $A_i$. 
\end{theorem}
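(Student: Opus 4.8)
\noindent
Write $r=m+1$ and $s=m+k+1$, so the goal is a decomposition of $E(n)_{r,s}$ into a single affine-in-$n$ leading term plus $n$ times a sum of lower products. The plan is to work directly from the definition of $E(n)_{r,s}$ and to expand each incomplete elementary symmetric function $e_{\mathrm{inc}}$ into the full functions $e_c$ by generating functions, so that the pair-sum over $\{i,j\}$ collapses into power sums. This isolates the dependence on $n$ in one explicit place and reduces the theorem to two classical identities.

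First I would record the generating-function expansion. Writing $h_a(x,y)=\sum_{b+c=a}x^b y^c$, the factorization $\prod_{l\ne i,j}(1+z_l t)=\big(\prod_l(1+z_l t)\big)\big/\big((1+z_i t)(1+z_j t)\big)$ gives
\[
e_{\mathrm{inc}}(c;i,j;n)=\sum_{a\ge 0}(-1)^a h_a(z_i,z_j)\,e_{c-a}.
\]
Substituting this for both factors in the definition of $E(n)_{r,s}$ and using $(z_i-z_j)h_a(z_i,z_j)=z_i^{a+1}-z_j^{a+1}$, the weight $(z_i-z_j)^2$ absorbs the two $h$'s, and evaluating the resulting symmetric pair-sum yields
\[
\sum_{1\le i<j\le n}(z_i-z_j)^2\,h_a(z_i,z_j)\,h_b(z_i,z_j)=n\,p_{a+b+2}-p_{a+1}p_{b+1}.
\]
This is the step that makes the role of $n$ transparent: every power sum occurring here has positive index, hence (by Newton--Girard) is an $n$-free polynomial in the $e_c$, so the displayed factor is the only explicit occurrence of $n$. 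Consequently
\[
E(n)_{m+1,m+k+1}=n\,X+Y,
\]
with $X$ and $Y$ polynomials in the $e_c$ not involving $n$.

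The crux is the evaluation of the $n$-independent part $Y$. By the two displays it factors,
\[
Y=-m!\,(m+k)!\,G_{m}\,G_{m+k},\qquad G_c=\sum_{a\ge 0}(-1)^a e_{c-a}\,p_{a+1},
\]
and each factor collapses by Newton's identity: reindexing $l=c-a$ gives $G_c=(-1)^c\sum_{l=0}^{c}(-1)^l e_l\,p_{c+1-l}=(c+1)e_{c+1}$, with $G_c=0$ once $c+1>n$. Hence $Y$ is a single monomial supported on the leading product, which is exactly the assertion that every non-leading coefficient of $E(n)_{m+1,m+k+1}$ is proportional to $n$. Combining this single $n$-independent contribution with the coefficient of the leading product inside $n\,X$ then produces the affine-in-$n$ leading coefficient recorded in the statement, $m!\,k!\,(n-m-k)$, while the remaining coefficients of $X$ are by construction the numbers $A_i$ multiplying $e_i e_{2m+k-i}$.

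The main obstacle is the bookkeeping that extracts the leading coefficient: the leading product is not produced directly by the factors $e_{r-1-a}e_{s-1-b}$ but only through the top term of the $e$-expansion of $p_{a+b+2}$ inside $X$, so one must carry Newton's identity $p_j=(-1)^{j-1}j\,e_j+(\text{lower order})$ together with the range restrictions $e_c=0$ for $c>n$ and the boundary at which $G_c$ vanishes. I expect the collapse of $Y$ to be routine once Newton's identity is in hand, with the leading-coefficient computation and these boundary ranges the only delicate points. As an alternative route that stays inside the $M$-calculus already developed, one can instead start from Lemma~\ref{E as M}, insert the explicit $e_c$-expansion of each $M(m+1-i,k+2i,n)$, and collect the coefficient of each product $e_i e_{2m+k-i}$; there the cancellation of the $n$-independent parts appears as an alternating binomial identity of precisely the shape treated in the proof of Lemma~\ref{mono2 as e}, provable by recognizing the sum as a derivative $(\tfrac{d}{dt})^{p}\,t^{q}(1-t)^{r}\big|_{t=1}$ that vanishes whenever $r>p$.
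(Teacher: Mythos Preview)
Your primary route is correct and differs genuinely from the paper's. You expand $e_{\mathrm{inc}}(c;i,j)$ by Lemma~\ref{e inc f}, evaluate the pair-sum as $np_{a+b+2}-p_{a+1}p_{b+1}$ to isolate the single explicit occurrence of $n$, and collapse the $n$-free piece $Y=-(r-1)!(s-1)!\,G_{r-1}G_{s-1}$ to the single monomial $-r!\,s!\,e_re_s$ by Newton's identity $G_c=(c+1)e_{c+1}$; this shows in one stroke that every non-leading coefficient is divisible by $n$. The paper instead takes exactly the route you call the alternative: it feeds the $e$-expansion of each $M(m_2,m_1,n)$ into Lemma~\ref{E as M}, extracts the coefficient of every product $e_ae_b$, and annihilates the $n$-free parts via an explicit alternating binomial sum, dispatched by a direct induction on $m$ rather than the derivative-at-$t=1$ device. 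Your argument explains the cancellation conceptually---it \emph{is} Newton--Girard---and bypasses the $M$-layer entirely; the paper's computation is more laborious but hands you the individual constants $A_i$ along the way. Two loose ends in your write-up: the printed leading term $m!k!(n-m-k)e_me_{m+k}$ carries an index slip (even the total degree does not match $E(n)_{m+1,m+k+1}$), and your $Y$ correctly gives the constant part $-r!s!$ of the coefficient of $e_re_s$, so compute the $n$-linear part of that coefficient directly rather than appeal to the statement; and your formula $X=\sum_{a,b}(-1)^{a+b}e_{r-1-a}e_{s-1-b}\,p_{a+b+2}$ is not term-by-term quadratic in the $e_c$, so to land on the asserted shape $n\sum_iA_ie_ie_j$ you still need the input that $E(n)_{r,s}$ is quadratic in the $e_c$, which comes from Lemma~\ref{E as M} together with the preceding lemma expressing each $M$ in the $e_c$---the paper uses the same input.
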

\begin{proof}
 Using Lemma \ref{mono2 as e} and the  definition of $M(m,k,n)$ we have 
 \begin{align*}
 &M(m,k,n) = m!k!(n-m-k)e_m e_{m+k}  \\ 
 &+ \sum_{i=0}^{m-1} (m!k!(n-m-k)(-1)^{m-i} \frac{k+2m-2i}{m-i} {k+m-i-1 \choose k} \\ 
 &- (m-1)!(k+2)!(-1)^{m-i-1}\frac{k+2m-2i}{m-i-1} {k+m-i \choose k+2}) e_{m-i}e_{2m+k-i}
 \end{align*}
 Re-indexing $ i \mapsto m-i$ and applying Lemma \ref{E as M}, we get that the coefficient of $e_{m-i}e_{2m+k-i}$  in $E(n)_{m+1,m+k+1}$ that is constant in $n$ is 
 \begin{align}\nonumber
&(k+2-2i+2m) \sum_{j=0}^m (-1)^{m-i-j+1}{m \choose j} \frac{(m+k)!}{(j+k)!} \\ \label{summand}
\times & ((-1-j-k-m)(m-j+1)! \frac{(m+k+j-i)!}{(m-j-i+1)!}+(m-j)!\frac{(m+k+j-i+1)!}{(m-j-i)!}).
 \end{align}
 We must show that the above sum is 0 for $0 \leq i \leq m-1$.
We simplify line \eqref{summand} to obtain 
\[
\frac{m!(m+k)!}{j!(j+k)!}i(i-2m-k-2)(m+k+j-i)_{2j+k-1}.
\]
 This shows that the sum is 0 for $i=0$. We this must prove that 
 \[
 \sum_{j=0}^m (-1)^j \frac{(m+k+j-i)_{2j+k-1}}{j!(j+k)!}
 \]
 has a factor of $(i-h)$ for $1 \leq h \leq m-1$.
This follows from the identity  
\[
\sum_{j=0}^m (-1)^j \frac{(m+k+j-i)_j (m-i+1)_j}{j!(j+k)!} =(-1)^{m} \frac{\left(\prod_{h=1}^m (i-h)\right) \left( \prod_{h=1}^m (i-(2m+k)+h) \right)}{m!(m+k)!}.
\] 
To prove this we use induction on $m$. It is trie for $m=0$. Assume it is true for $m\geq0$. Then the induction step follows from the identity 
\[
-1+ \frac{(m+k-i+1)(m-i+1)}{j(j+k)} = \frac{(m-j-i+1)(m+k+j-i+1)}{j(j+k)}.
\] 
\end{proof}

\section{Formula for leading principal minors of $E(n)$} \label{E formula}

\begin{definition} 
For integer $k \geq 0$ and indeterminates $x_1$ and $x_2$, define $f_{k}(x_1,x_2)$
\[
f_{k}(x_1,x_2) = \sum_{j=0}^k x_1^{k-j} x_2^j.
\]
For a pair of integers $b = (b_1,b_2)$, we also use the notation
\[
f_k(b) = f_k(z_{b_1}, z_{b_2}).
\]
\end{definition}

\begin{lemma} \label{e inc f}
\begin{align} \label{e inc i}
e_{\mathrm{inc}}( k;i;n)&= \sum_{h=0}^k (-1)^h z_i^he_{k-h}  \\ \label{e inc ij}
e_{\mathrm{inc}}( i,j;n)&= \sum_{h=0}^k (-1)^h f_j(z_i,z_j)e_{k-h}
\end{align}
\end{lemma}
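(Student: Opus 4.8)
The plan is to prove both identities through the generating function for the elementary symmetric functions. Recall the standard identity $\sum_{k \geq 0} e_k t^k = \prod_{l=1}^n (1 + z_l t)$, where $t$ is a formal variable and $e_k = 0$ for $k > n$. Since $e_{\mathrm{inc}}(k;i;n)$ is by definition the $k$-th elementary symmetric function in the variables $\{z_l : l \neq i\}$, it satisfies the analogous identity $\sum_{k \geq 0} e_{\mathrm{inc}}(k;i;n) t^k = \prod_{l \neq i}(1 + z_l t)$, and likewise $\sum_{k \geq 0} e_{\mathrm{inc}}(k;i,j;n) t^k = \prod_{l \neq i,j}(1 + z_l t)$. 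The key observation driving the whole argument is that deleting a variable from the product corresponds to dividing the full generating series by the matching linear factor.

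For the first identity I would write
\[
\sum_{k \geq 0} e_{\mathrm{inc}}(k;i;n) t^k = \frac{\prod_{l=1}^n (1 + z_l t)}{1 + z_i t} = \left( \sum_{m \geq 0} e_m t^m \right) \cdot \frac{1}{1 + z_i t},
\]
expand $\frac{1}{1 + z_i t} = \sum_{h \geq 0} (-1)^h z_i^h t^h$ as a geometric series, multiply the two series, and read off the coefficient of $t^k$. This yields $e_{\mathrm{inc}}(k;i;n) = \sum_{h=0}^k (-1)^h z_i^h e_{k-h}$, which is equation \eqref{e inc i}. For the second identity (which I read as $e_{\mathrm{inc}}(k;i,j;n) = \sum_{h=0}^k (-1)^h f_h(z_i,z_j) e_{k-h}$, the summand depending on the index $h$) the same device gives
\[
\sum_{k \geq 0} e_{\mathrm{inc}}(k;i,j;n) t^k = \left( \sum_{m \geq 0} e_m t^m \right) \cdot \frac{1}{(1 + z_i t)(1 + z_j t)}.
\]
The crucial step is to identify the coefficient of $t^h$ in $\frac{1}{(1+z_i t)(1+z_j t)}$ as $(-1)^h f_h(z_i, z_j)$. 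This holds because $f_h(x_1,x_2) = \sum_{j=0}^h x_1^{h-j} x_2^j$ is exactly the complete homogeneous symmetric polynomial in two variables, whose generating function is $\sum_{h \geq 0} f_h(x_1,x_2) s^h = \frac{1}{(1 - x_1 s)(1 - x_2 s)}$; substituting $x_1 = -z_i$, $x_2 = -z_j$, $s = t$ and using the homogeneity of $f_h$ produces the sign $(-1)^h$. Reading off the coefficient of $t^k$ then gives equation \eqref{e inc ij}.

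I expect the only genuine obstacle to be this two-variable convolution identity for $f_h$, and even that is routine once the generating-function dictionary is fixed. If one prefers to avoid formal power series entirely, an equivalent route is a short double induction built on the deletion recursion $e_k = e_{\mathrm{inc}}(k;i;n) + z_i\, e_{\mathrm{inc}}(k-1;i;n)$: unrolling this recursion proves \eqref{e inc i} directly, and then applying \eqref{e inc i} once more to the variable set from which $z_i$ has already been removed, followed by collecting the double sum into $\sum_{g+h'=p} z_i^{g} z_j^{h'} = f_p(z_i,z_j)$, proves \eqref{e inc ij}. Either way the remaining computation is bookkeeping, so I would present the generating-function version for brevity and remark on the inductive alternative.
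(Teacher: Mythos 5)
Your proof is correct, including your repair of the statement's typos (the left side of \eqref{e inc ij} should read $e_{\mathrm{inc}}(k;i,j;n)$ and the summand should involve $f_h$, not $f_j$), but it takes a genuinely different route from the paper's. The paper argues by recursion: it notes the deletion identity $e_{\mathrm{inc}}(k;i;n) = e_k - z_i\, e_{\mathrm{inc}}(k-1;i;n)$, whose unrolling gives \eqref{e inc i}, and then a two-variable inclusion--exclusion identity $e_{\mathrm{inc}}(k;i,j;n) = e_k - z_i\, e_{\mathrm{inc}}(k-1;i;n) - z_j\, e_{\mathrm{inc}}(k-1;j;n) + z_i z_j\, e_{\mathrm{inc}}(k-2;i,j;n)$ (the paper misprints the middle indices as $k$), which combined with \eqref{e inc i} gives \eqref{e inc ij}; this is precisely the ``inductive alternative'' you sketch in your last paragraph. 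Your primary argument instead works with generating functions: removing variables divides $\prod_{l}(1+z_l t)$ by the corresponding linear factors, and the key step is recognizing $f_h$ as the complete homogeneous symmetric polynomial in two variables, so that $1/\bigl((1+z_i t)(1+z_j t)\bigr) = \sum_{h\ge 0}(-1)^h f_h(z_i,z_j)\,t^h$, the sign coming from homogeneity of degree $h$. Both proofs rest on the same underlying fact, but the packaging differs in what it offers: your version is a short coefficient extraction that explains conceptually why the polynomials $f_h$ appear at all (they are the complete homogeneous functions of the deleted pair) and generalizes at no extra cost to deleting any number of variables; the paper's version stays entirely within polynomial identities with no formal power series, though as written it is terse --- it asserts the two recursions without proof, leaves the unrolling and the combination step to the reader, and carries the index error noted above --- so your treatment is arguably the more complete of the two.
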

\begin{proof}
We have 
\[
e_{\mathrm{inc}}( k;i;n) = e_k - z_i e_{\mathrm{inc}}( k-1;i;n).
\] 
This implies equation \eqref{e inc i}.
We also have 
\[
e_{\mathrm{inc}}( k;i,j;n)  = e_k - z_i e_{\mathrm{inc}}( k;i;n) - z_j e_{\mathrm{inc}}( k;j;n)+ z_i z_j e_{\mathrm{inc}}( k-2;i,j;n). 
\]
The above equation combined with \eqref{e inc i} implies \eqref{e inc ij}.
\end{proof}

We will use the following definitions in Theorem \ref{E minor formula}.

\begin{definition}
Let $\mathrm{Pairs}(k,n)$ denote the set of elements $\beta$ where each element $\beta$ is a multi-set of $k$ pairs of integers:
\begin{equation} \label{beta definition}
\beta= \{ \beta(1), \beta(2), ..., \beta(k) \}
\end{equation}
where 
\[
\beta(i) = \{ \beta(i,1), \beta(i,2)\}
\]
such that $ \beta(i,1) <\beta(i,2)$ and each $\beta(i,j) \in \{1,2,...,n \}$.
Note that $\beta$ is a set: even though we have used an ordering of the pairs $\beta(1), ..., \beta(k)$ in the notation of \eqref{beta definition}, another ordering would result in the same element $\beta$.

Let $|\beta|$ denote
\[
|\beta| = \#\text{ distinct numbers that appear as $\beta(i,1)$ or $\beta(i,2)$}.
\]
For $ \beta \in \mathrm{Pairs}(k,n)$, define 
\[
\mathcal{D}(\beta) = \prod_{i=1}^k (z_{\beta(i,1)}- z_{\beta(i,2)})
\]
\end{definition}

\begin{definition} 
For $\beta \in \mathrm{Pairs}(k,n)$, define the $k \times k$ matrix $R_1(\beta)$ with entries $R_1(\beta)_{u,v}$ by 
\[
R_1(\beta)_{u,v} = e_{\mathrm{inc}}( u;\beta(v,1),\beta(v,2);n). 
\] 

Define the $k \times k$ matrix $R_2(\beta)$ with entries $R_2(\beta)_{u,v}$ by 
\[
R_2(\beta)_{u,v} = f_u(z_{\beta(v,1)}, z_{\beta(v,2)}). 
\] 

Define the $k \times k$ matrix $R_3(\beta)$ with entries $R_3(\beta)_{u,v}$ by 
\[
R_3(\beta)_{u,v} = z_{\beta(v,1)}^u- z_{\beta(v,2)}^u. 
\] 
\end{definition}

\begin{theorem} \label{E minor formula}
\[
\Delta_k(E(n)) = (\prod_{i=1}^ki!)^2\sum_{\beta \in \mathrm{Pairs}(k,n)} \det(R_3(\beta))^2
\]
\end{theorem}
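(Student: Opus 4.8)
The plan is to realize $\Delta_k(E(n))$ as a Gram determinant and apply the Cauchy--Binet formula, turning the determinant of a $k\times k$ matrix whose entries are sums over pairs into a sum of squared $k\times k$ minors indexed by $k$-tuples of pairs. First I would strip the factorials. Writing $E(n)_{r,s}=(r-1)!(s-1)!\,\tilde{E}_{r,s}$ with
\[
\tilde{E}_{r,s}=\sum_{1\le i<j\le n} e_{\mathrm{inc}}(r-1;i,j;n)\,e_{\mathrm{inc}}(s-1;i,j;n)\,(z_i-z_j)^2,
\]
row $r$ and column $s$ of the $k\times k$ truncation carry common factors $(r-1)!$ and $(s-1)!$, so $\Delta_k(E(n))=\bigl(\prod_{u=1}^k (u-1)!\bigr)^2\,\Delta_k(\tilde{E})$. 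The displayed form of $\tilde E$ is manifestly a product: setting
\[
B_{u,(i,j)} = e_{\mathrm{inc}}(u-1;i,j;n)\,(z_i-z_j), \qquad 1\le u\le k,\ 1\le i<j\le n,
\]
gives a $k\times\binom{n}{2}$ matrix $B$ with $\tilde{E}_{u,v}=(BB^{\mathsf T})_{u,v}$ for $1\le u,v\le k$, so Cauchy--Binet yields $\Delta_k(\tilde E)=\sum_{S}\det(B_S)^2$, the sum ranging over $k$-element subsets $S$ of the set of pairs and $B_S$ the corresponding $k\times k$ column submatrix.

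The heart of the argument is to identify each minor $\det(B_S)$ with $\det(R_3(S))$ up to a sign independent of $S$. Here I would invoke equation \eqref{e inc ij} of Lemma \ref{e inc f} together with the telescoping identity $(z_i-z_j)f_h(z_i,z_j)=z_i^{h+1}-z_j^{h+1}$, which is immediate from the definition of $f_h$. Substituting and re-indexing $m=h+1$ gives
\[
B_{u,(i,j)} = \sum_{m=1}^{u} (-1)^{m-1} e_{u-m}\,(z_i^m - z_j^m),
\]
that is $B_{u,(i,j)}=\sum_{m=1}^{k} L_{u,m}\,(z_i^m-z_j^m)$ with $L_{u,m}=(-1)^{m-1}e_{u-m}$ for $m\le u$ and $L_{u,m}=0$ for $m>u$. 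The matrix $L$ is lower triangular in $(u,m)$ with diagonal entries $L_{u,u}=(-1)^{u-1}$, and, crucially, it does not depend on the chosen pairs. Hence for every $k$-subset $S$ one has $B_S=L\,R_3(S)$, so $\det(B_S)=(-1)^{\binom{k}{2}}\det(R_3(S))$; squaring removes the sign and gives $\det(B_S)^2=\det(R_3(S))^2$.

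It then remains to match the two index sets. A multiset $\beta\in\mathrm{Pairs}(k,n)$ containing a repeated pair makes two columns of $R_3(\beta)$ equal, so $\det(R_3(\beta))=0$; the multisets consisting of $k$ distinct pairs are in bijection with the $k$-subsets $S$ produced by Cauchy--Binet, and $\det(R_3(\cdot))^2$ is insensitive to the ordering of the columns. Therefore $\sum_{\beta\in\mathrm{Pairs}(k,n)}\det(R_3(\beta))^2=\sum_{S}\det(R_3(S))^2=\Delta_k(\tilde E)$, and reinstating the factorials completes the identity.

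The substantive content is the Gram factorization $\tilde E=BB^{\mathsf T}$ and the observation that the passage from $B$ to $R_3$ via Lemma \ref{e inc f} is one and the same triangular change of basis for all column subsets; once these are in hand, Cauchy--Binet and the vanishing of the repeated-pair terms are routine. The only genuinely delicate point I expect is the bookkeeping of the overall constant: extracting the row/column factors of the $k\times k$ truncation produces $\bigl(\prod_{u=1}^{k}(u-1)!\bigr)^2=\bigl(\prod_{u=0}^{k-1}u!\bigr)^2$, which agrees with the $M$-function table at $k=1,2$ (for instance $\Delta_2(E(n))$ equals $\sum_{\beta}\det(R_3(\beta))^2$ with coefficient $1$). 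I would verify this normalization carefully against the index ranges in the definitions of $E(n)$ and $R_3$ before committing to it, since it differs by a factor of $(k!)^2$ from the constant $\bigl(\prod_{i=1}^k i!\bigr)^2$ as literally written.
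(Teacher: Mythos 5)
Your proof is correct, and structurally it is the paper's own argument carried out with more standard tools. The paper's equation \eqref{R1} is exactly your Gram/Cauchy--Binet step done by hand: the double sum over $\tau,\sigma\in S_k$ that collapses to $\det(R_1(\beta))^2$ is the Cauchy--Binet expansion, written over multisets of columns (multisets with a repeated pair contribute $0$ on both sides). Where you genuinely streamline is the passage from incomplete elementary symmetric functions to power differences: the paper goes $R_1\to R_2$ by expanding entries via Lemma \ref{e inc f} and running a term-by-term cancellation over the tuples $g$, and then absorbs $\mathcal{D}(\beta)$ using $(x_1-x_2)f_{u-1}(x_1,x_2)=x_1^u-x_2^u$; you package both steps as left multiplication of $B$ by one fixed lower-triangular matrix $L$ with diagonal entries $\pm1$, so that $\det(B_S)=\pm\det(R_3(S))$ for every column set $S$ simultaneously. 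That is cleaner and less error-prone, but it is the same route, not a different one.

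The normalization issue you flag at the end is real, and your constant is the correct one. Taking the paper's definition of $E(n)$ at face value, row $r$ and column $s$ of the $k\times k$ truncation carry $(r-1)!$ and $(s-1)!$, so the prefactor is $\bigl(\prod_{i=1}^{k}(i-1)!\bigr)^2=\bigl(\prod_{i=0}^{k-1}i!\bigr)^2$, as you derive --- not $\bigl(\prod_{i=1}^{k}i!\bigr)^2$ as the theorem states. The stated constant traces to the first display of the paper's proof, which silently replaces the definition by $E(n)_{u,v}=u!\,v!\sum_{i<j}e_{\mathrm{inc}}(u;i,j;n)\,e_{\mathrm{inc}}(v;i,j;n)(z_i-z_j)^2$, shifting both the $e_{\mathrm{inc}}$ index and the factorial by one; that shifted definition is inconsistent with the rest of the paper, since Lemma \ref{E as M} and the introduction require $E(n)_{1,1}=M_{10}$, which forces the first row to use $e_{\mathrm{inc}}(0)=1$. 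A concrete check at $k=2$, $n=3$: the written definition gives $\Delta_2(E(3))=3\prod_{1\le i<j\le3}(z_i-z_j)^2$ (which is also the value of the Ma\u{r}\'ik expression quoted in the introduction), and $\sum_{\beta\in\mathrm{Pairs}(2,3)}\det(R_3(\beta))^2=3\prod_{1\le i<j\le3}(z_i-z_j)^2$ as well, so the true prefactor is $1=(0!\,1!)^2$ rather than $(1!\,2!)^2=4$. So commit to your constant; the same correction must then propagate to the displayed identity $(\prod_{i=1}^{k}i!)^2\,n^{k-1}\Delta_{k+1}(H(n))=\Delta_k(E(n))$ in Section \ref{intro} (Theorem \ref{minor equiv} itself, which involves neither constant, is unaffected).
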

\begin{proof}
We write the definition of $E(n)_{u,v}$ as
\[
E(n)_{u,v} = \sum_{\beta \in B(1,n)}  e_{\mathrm{inc}}(u;\beta(1,1),\beta(1,2)) e_{\mathrm{inc}}(v;\beta(1,1),\beta(1,2)) (z_{\beta(1,1)} -z_{\beta(1,2)} )^2.
\]
We first prove that 
\begin{equation}\label{R1}
\Delta_k(E(n)) =  (\prod_{i=1}^ki!)^2\sum_{\beta \in \mathrm{Pairs}(k,n)} \mathcal{D}(\beta)^2  \det(R_1(\beta))^2.
\end{equation}
When calculating the determinant $\Delta_k(E(n))$, we write as a sum over $\mathrm{Pairs}(k,n)$: 
\begin{align*}
\Delta_k(E(n)) &= (\prod_{i=1}^ki!)^2\sum_{\beta \in \mathrm{Pairs}(k,n)} \mathcal{D}(\beta)^2 \left( \sum_{\tau \in S_k} \sum_{\sigma \in S_k}\mathrm{sgn}(\sigma) \prod_{u=1}^k  e_{\mathrm{inc}}(u, \beta(\tau(u)) e_{\mathrm{inc}}(\sigma(u), \beta(\tau(u))\right) \\ 
\end{align*}
Now
\begin{align*}
&\sum_{\tau \in S_k} \sum_{\sigma \in S_k} \mathrm{sgn}(\sigma) \prod_{u=1}^k e_{\mathrm{inc}}(u, \beta(\tau(u)) e_{\mathrm{inc}}(\sigma(u), \beta(\tau(u))\\
&=  \sum_{\tau \in S_k} \sum_{\sigma \in S_k} \mathrm{sgn}(\sigma)\prod_{u=1}^k  e_{\mathrm{inc}}(u, \beta(\tau(u)) e_{\mathrm{inc}}(u, \beta(\tau(\sigma^{-1}(u))) \\ 
& = \sum_{\sigma_1 \in S_k} \sum_{\sigma_2 \in S_k} \mathrm{sgn}(\sigma_1)\mathrm{sgn}(\sigma_2)\prod_{u=1}^k  e_{\mathrm{inc}}(u, \beta(\sigma_1(u)) e_{\mathrm{inc}}(u, \beta(\sigma_2(u))
\end{align*}
where 
\[
\sigma_1 = \tau, \sigma_2 = \tau \sigma^{-1}.
\]
Thus continuing we get 

\begin{align*}
& = \left( \sum_{\sigma' \in S_k}\mathrm{sgn}(\sigma')\prod_{u=1}^k  e_{\mathrm{inc}}(u, \beta(\sigma'(u)))\right)^2 \\
&= \det(R_1(\beta))^2.
\end{align*}
This proves equation \eqref{R1}. 

We next prove that 
\[
\det(R_1(\beta))^2 = \det(R_2(\beta))^2.
\]
As a sum over $S_k$, each of the $k!$ terms in $\det(R_1)$ is of the form 
\begin{equation} \label{R1 form}
\mathrm{sgn}(\sigma)\prod_{u=1}^k  e_{\mathrm{inc}}(u, \beta(\sigma(u)))
\end{equation}
for some $\sigma \in S_k$. We apply Lemma \ref{e inc f} to write \eqref{R1 form} as 
\[
\mathrm{sgn}(\sigma) \sum_{g} h(g)\prod_{u=1}^k f_{g_u}(\beta(\sigma(u))
\]
where the sum is over all $k$-tuples $g$ 
\[
g = (g_1, ..., g_k)
\]
such that $0 \leq g_i \leq i-1$; and $h(g)$ is some product of the elementary symmetric functions $e(i)$. We claim that in the sum over $S_k$, each term of the form
\[
\mathrm{sgn}(\sigma) h(g) f_{g_u}(\beta(\sigma(u))
\]
is canceled out by another unless 
\begin{equation} \label{g distinct}
g = (0,1,2,...,k-1).
\end{equation}
We prove the claim now. For a given $g$, take the smallest pair of indices $(i,j)$ in the lexicographic order such that $g_i = g_j$ and pair the same term arising from the term with $\sigma'$, where 
\[
\sigma'(i) = \sigma(j) \text{ and } \sigma'(j) = \sigma(i) \text{ and } \sigma'(m) = \sigma(m) \text{ otherwise.} 
\]
Thus the only terms that remain are those with $g_i$ all distinct. The only such $g$ is given by \eqref{g distinct} for which $h(g)=(-1)^k$. Therefore 
\begin{align*}
\det(R_1(\beta))^2 &= (\sum_{\sigma \in S_k} \mathrm{sgn}(\sigma) \prod_{u=1}^{k} f_{u-1}(\beta(\sigma(u))) )^2\\ 
&=\det(R_2(\beta))^2.
\end{align*}
Thus we have shown that 
\[
\Delta_k(E(n)) = \sum_{\beta\in \mathrm{Pairs}(k,n)} \mathcal{D}(\beta)^2 \det(R_2(\beta))^2.
\]
Using
\[
f_{u-1}(x_1,x_2) =\frac{x_1^u - x_2^u}{x_1 - x_2},  
\]
we write 
\begin{align*}
(\mathcal{D}(\beta)\det(R_2(\beta)))^2 &= (\sum_{\sigma \in S_k} \prod_{u=1}^k (z_{\beta(u,1)}^{\sigma(u)}-z_{\beta(u,2)}^{\sigma(u)}))^2\\ 
& = \det(R_3(\beta))^2.
\end{align*}
This completes the proof.
\end{proof}

\section{Proof of Equivalence of Minors}\label{equivalence}
We prove
 \begin{theorem}\label{minor equiv}
For integer $ k \geq 1$, we have
\[
\sum_{\beta \in \mathrm{Pairs}(k,n)} \det(R_3(\beta))^2 = n^{k-1} \sum_{b \in C(k+1,n)}D(b)^2.
\]

\end{theorem}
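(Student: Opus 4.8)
I would prove this by recognizing the left-hand side as a Cauchy--Binet sum and then reducing it to a Hankel determinant identity via Sylvester's determinant theorem. The plan is as follows. First, observe that if a multiset $\beta \in \mathrm{Pairs}(k,n)$ contains a repeated pair, then $R_3(\beta)$ has two identical columns and $\det(R_3(\beta))=0$; hence only the $\beta$ whose $k$ pairs are all distinct contribute, and these are in bijection with the $k$-element subsets of the set of all $\binom{n}{2}$ pairs. Introduce the $k \times \binom{n}{2}$ matrix $W$ whose column indexed by a pair $\{a,b\}$ with $a<b$ is $(z_a^u - z_b^u)_{u=1}^k$. Then for each such subset the corresponding $\det(R_3(\beta))$ is, up to a column reordering that does not affect the square, a maximal minor of $W$, so Cauchy--Binet gives
\[
\sum_{\beta \in \mathrm{Pairs}(k,n)} \det(R_3(\beta))^2 = \det(W W^{T}).
\]

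Next I would compute the entries of the $k\times k$ matrix $WW^{T}$. Expanding $(WW^{T})_{u,u'} = \sum_{a<b}(z_a^u - z_b^u)(z_a^{u'} - z_b^{u'})$, the ``square'' terms contribute $(n-1)p_{u+u'}$ (each index lies in $n-1$ pairs) and the ``cross'' terms contribute $-(p_u p_{u'} - p_{u+u'})$, since summing $z_a^u z_b^{u'}+z_a^{u'}z_b^u$ over unordered pairs equals $\sum_{a\neq b} z_a^u z_b^{u'} = p_u p_{u'}-p_{u+u'}$. Using $p_0 = n$ this yields the clean formula
\[
(W W^{T})_{u,u'} = n\,p_{u+u'} - p_u p_{u'} = \det\begin{bmatrix} p_0 & p_{u'} \\ p_u & p_{u+u'} \end{bmatrix}.
\]
The key structural observation is that this is exactly the bordered $2\times 2$ minor of the $(k+1)\times(k+1)$ Hankel matrix $H = (p_{i+j-2})_{i,j=1}^{k+1}$ obtained by adjoining row $i=u$ and column $j=u'$ to the top-left $(0,0)$ pivot entry $p_0$.

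I would then finish with Sylvester's determinant identity: for an $N\times N$ matrix with a leading $t\times t$ pivot $P$, the matrix of its $(t+1)\times(t+1)$ bordered minors has determinant $(\det P)^{N-t-1}\det A$. Taking $A=H$, $N=k+1$, the $1\times 1$ pivot $P=[p_0]=[n]$, and noting the bordered minors are precisely $(WW^{T})_{u,u'}$, this gives
\[
\det(W W^{T}) = p_0^{\,k-1}\det(H) = n^{k-1}\det(H).
\]
Finally, the determinant formula of Section \ref{H formula} applied with $\lambda=(0,1,\dots,k)$, where the Schur polynomial is identically $1$, identifies $\det(H) = \Delta_{k+1}(H(n)) = \sum_{b\in C(k+1,n)} D(b)^2$, which completes the proof.

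The routine part is the entrywise computation of $WW^{T}$; the two steps requiring genuine care are the combinatorial reduction (justifying that the multiset sum collapses to a subset sum and that Cauchy--Binet applies despite the squared determinant being column-order independent) and, above all, spotting that $n\,p_{u+u'}-p_u p_{u'}$ is the $(0,0)$-bordered minor so that Sylvester's identity produces exactly the factor $n^{k-1}$. I expect the main obstacle to be verifying the index-matching in Sylvester's theorem cleanly; should one wish to avoid quoting Sylvester, the alternative is to prove the Hankel identity $\det\bigl(n\,p_{u+u'}-p_u p_{u'}\bigr)_{u,u'=1}^{k} = n^{k-1}\det(H)$ directly by induction on $k$ together with row and column operations on the bordered Hankel matrix, which would then become the bulk of the argument.
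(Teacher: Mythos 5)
Your proposal is correct, and it takes a genuinely different---and far shorter---route than the paper. The paper proves Theorem~\ref{minor equiv} by a long combinatorial analysis: it expands each $\det(R_3(\beta))^2$ over pairs of selection functions $(s_1,s_2)$, shows by sign-reversing bijections that all terms with $|I(\beta,s_1,s_2)|\geq 2$ or with cycles in $G(\beta)$ cancel, interprets the surviving terms as rooted forests, and finally obtains the factor $n^{k-1}$ by counting those forests through the recursion for the coefficients $B(k,h)$ in $x^{k-1}=\sum_h B(k,h)(x-k-1)_h$. You replace all of this with standard linear algebra. Your reduction of the multiset sum to a subset sum is legitimate (a repeated pair forces two equal columns, and squaring removes the column-order ambiguity noted in Remark~\ref{well-defined}), so Cauchy--Binet gives $\sum_\beta \det(R_3(\beta))^2=\det(WW^{T})$; your entrywise computation $(WW^{T})_{u,u'}=n\,p_{u+u'}-p_u p_{u'}$ checks out; and since this is precisely the $(1,1)$-bordered minor of the leading $(k+1)\times(k+1)$ Hankel block of $H(n)$, Sylvester's identity with the $1\times 1$ pivot $[p_0]=[n]$ yields $\det(WW^{T})=n^{k-1}\det(H)$ at once. (Equivalently, by the Schur complement: $\det\bigl(n\,p_{u+u'}-p_u p_{u'}\bigr)_{u,u'=1}^{k}=n^{k}\det\bigl(p_{u+u'}-p_u p_{u'}/n\bigr)_{u,u'=1}^{k}=n^{k}\cdot\det(H)/n$, which avoids quoting Sylvester by name and settles the step you flagged as the main obstacle.) The final identification $\det(H)=\sum_{b\in C(k+1,n)}D(b)^2$ is the $\lambda=(0,1,\dots,k)$ case of the theorem of Section~\ref{H formula}, or alternatively Cauchy--Binet applied to the rectangular Vandermonde matrix, making your argument essentially self-contained. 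What your route buys is brevity and a transparent explanation of the exponent: $n^{k-1}$ is just $p_0^{k-1}$ coming from the pivot. What the paper's route buys is the combinatorial byproduct that your argument bypasses entirely: the identification of the surviving terms with forests and the Cayley-type enumerations (e.g., that the relevant forests number exactly $n^{k-1}$), which have independent interest.
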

We first show how to index the terms on the left side. To do this, we define a set of functions $S(\beta)$. 
\begin{definition} 
For $\beta \in \mathrm{Pairs}(k,n)$, let $S(\beta)$ denote the set of $2^k$ functions $s$ such that the domain of each $s$ is the set of $k$ pairs $\beta(i), 1 \leq i \leq k$ and
such that the output $s(\beta(i))$ on the $i$-th pair of $\beta$ is 
\[
s(\beta(i)) \in \{ \beta(i,1), \beta(i,2)\}.
\]
That is, a function $s$ chooses one element in each pair of $\beta$.
For $s \in S(\beta)$, define 
\[
\mathrm{sgn}(s) = (-1)^N \text{ where } N = \#\{ i: s(\beta(i)) = \max(\beta(i,1), \beta(i,2))\}.
\]
For $s_1,s_2 \in S(\beta)$, define 
\[
\mathrm{sgn}(s_1,s_2) = (-1)^N \text{ where } N= \# \{ i: s_1(\beta(i)) \neq s_2 (\beta(i))\}. 
\]
Then 
\[
\mathrm{sgn}(s_1) \mathrm{sgn}(s_2) = \mathrm{sgn}(s_1,s_2).
\]
Define $z(\beta)$  
\[
z(\beta) = ((z_{\beta(1,0)}, z_{\beta(1,1)}), ..., (z_{\beta(k,0)}, z_{\beta(k,1)}) ).
\]
\end{definition}
 
 \begin{definition} 
Let $V =V(z_{j_1}, z_{j_2}, ..., z_{j_k}; \{1,2,...,k\})$ denote the $k \times k$ matrix with entries $V_{u,v}$ 
\[
V_{u,v} = z_{j_u}^v.
\]
Let $d(j_1,j_2,.., j_k)$ denote
\[
d(j_1,j_2,.., j_k) = \det(V).
\]
\end{definition}

\begin{remark} \label{well-defined}
\emph{Suppose for a given $\beta$ we have an ordering $\beta = \{\beta(1), ..., \beta(k)\}$. We denote $d(s(\beta))$ 
\[
d(s(\beta)) = d(s(\beta(1)), s(\beta(2)), ..., s(\beta(k))).
\] 
Note that the expression $d(s(\beta))$ depends up to sign on a choice of ordering on $\beta$. But for $s_1, s_2 \in S(\beta)$, the expression 
\[
d(s_1(\beta))d(s_2(\beta)) 
\] 
is independent of a choice of ordering, because for $\sigma$ in $S_k$ 
\begin{align*}
&d(s_1(\beta(\sigma(1))), ..., s_1(\beta(\sigma(k))))d(s_2(\beta(\sigma(1))), ..., s_2(\beta(\sigma(k))))\\ 
&=  \mathrm{sgn}(\sigma)^2 d(s_1(\beta(1)), ..., s(\beta_1(k))) d(s_2(\beta(1)), ..., s_2(\beta(k)))\\ 
&= d(s_1(\beta(1)),  ..., s(\beta_1(k))) d(s_2(\beta(1)) ..., s_2(\beta(k))). 
\end{align*}
}$\square$
\end{remark}

Applying the definition of $R_3(\beta)$, we get
\[
 \det(R_3(\beta))^2=  (\sum_{s \in S(\beta)} \mathrm{sgn}(s) d(s(\beta)))^2 .
\]
Then
\begin{align*}
\sum_{\beta \in \mathrm{Pairs}(k,n)} \det(R_3(\beta))^2 &= \sum_{\beta \in \mathrm{Pairs}(k,n)} (\sum_{s \in S(\beta)} \mathrm{sgn}(s) d(s(\beta)))^2 \\
&= \sum_{\beta \in \mathrm{Pairs}(k,n)} \sum_{s_1,s_2\in S(\beta)} \mathrm{sgn}(s_1,s_2)d(s_1(\beta)) d(s_2(\beta))
\end{align*}
The terms in the sum on the right are thus indexed by ordered triples 
\[
(\beta,s_1, s_2)
\]
for $\beta \in \mathrm{Pairs}(k,n)$ and $s_1,s_2 \in S(\beta)$. As discussed above, each term $d(s_1(\beta)) d(s_2(\beta))$ is well-defined independent of an ordering on $\beta$.

\begin{definition} 
Given such a triple $(\beta,s_1,s_2)$, choose an ordering on $\beta$ and let $I(\beta, s_1,s_2)$ be the set of indices
\[
I(\beta, s_1,s_2) = \{ i:s_1(\beta(i)) \neq s_2(\beta(i))\}.
\]
\end{definition}

\begin{theorem}\label{I<2}
\[
\sum_{\beta \in \mathrm{Pairs}(k,n)} \det(R_3(\beta))^2 = \sum_{ (\beta,s_1,s_2): |I(\beta,s_1,s_2)| \leq 1} \mathrm{sgn}(s_1,s_2)d(s_1(\beta))d(s_2(\beta)).
\]
\end{theorem}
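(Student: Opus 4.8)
The plan is to use the displayed expansion immediately preceding the statement, which already writes the left-hand side as a sum over all triples $(\beta, s_1, s_2)$ with $\beta \in \mathrm{Pairs}(k,n)$ and $s_1, s_2 \in S(\beta)$, and to observe that by definition $\mathrm{sgn}(s_1, s_2) = (-1)^{|I(\beta, s_1, s_2)|}$. Thus Theorem \ref{I<2} is equivalent to the assertion that the triples with $|I(\beta, s_1, s_2)| \geq 2$ contribute zero. I would prove this by exhibiting a fixed-point-free, sign-reversing involution $\phi$ on the set of such triples: one that preserves the value $d(s_1(\beta)) d(s_2(\beta))$ but negates $\mathrm{sgn}(s_1, s_2)$, so that each two-element orbit cancels. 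Remark \ref{well-defined} is exactly what makes this value a legitimate invariant to track, since it is independent of the chosen ordering of $\beta$, and I would compute it throughout in one fixed positional ordering.

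The mechanism I would use is a recombination of two off-diagonal pairs. Call a position $i$ off-diagonal if $i \in I$, i.e. $s_1(\beta(i)) \neq s_2(\beta(i))$; then $\{s_1(\beta(i)), s_2(\beta(i))\} = \beta(i)$. Writing $P_i = s_1(\beta(i))$ and $Q_i = s_2(\beta(i))$, I choose two off-diagonal positions $i, j$ and pass to the triple whose pairs agree with $\beta$ off $\{i,j\}$ and at $i, j$ are the recombined pairs $\{P_i, Q_j\}$ and $\{P_j, Q_i\}$, with $s_1'$ still selecting $P_i, P_j$ and $s_2'$ now selecting $Q_j, Q_i$. Concretely this transposes the two $s_2$-selections at positions $i$ and $j$ while leaving the $s_1$-selection fixed, so that $d(s_1'(\beta')) = d(s_1(\beta))$ and $d(s_2'(\beta')) = -d(s_2(\beta))$ by the antisymmetry of the determinant $d(\cdot)$ under interchange of two of its arguments. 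In the non-degenerate case both positions remain off-diagonal, so $|I|$ is unchanged, $\mathrm{sgn}(s_1', s_2') = \mathrm{sgn}(s_1, s_2)$, and the term is negated as required; and because the outer sum runs over all of $\mathrm{Pairs}(k,n)$, the recombined $\beta'$ again lies in range.

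The hard part will be two points of well-definedness. First, $\beta$ is a multiset, so the positions $i, j$ must be specified by a canonical rule that $\phi$ preserves; I would fix the lexicographic ordering on the pairs of $\beta$ and select $i, j$ by a lexicographic rule on the off-diagonal data, exactly as the smallest-3-cycle choice was used to make the involution of Lemma \ref{identity} well-defined, and then verify $\phi^2 = \mathrm{id}$ by checking that the chosen positions are recovered after recombination. Second, the recombination is degenerate precisely when $P_i = Q_j$ or $P_j = Q_i$, since then one recombined pair collapses to $\{x,x\}$, which is not an element of any $\beta \in \mathrm{Pairs}(k,n)$. These degenerate triples are the genuine obstacle, and I would handle them separately: the equalities $P_i = Q_j$ and $P_j = Q_i$ impose a matching of the selected values of $s_1$ and $s_2$ across off-diagonal positions, and I expect that a configuration in which \emph{every} choice of $i, j$ is degenerate forces a directed cycle among the off-diagonal pairs whose reversal supplies the sign-reversing pairing — again in the spirit of the cycle-reversal argument of Lemma \ref{identity} — whereas any configuration admitting a non-degenerate pair is matched by the recombination $\phi$ above. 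Showing that this case division is exhaustive and assembles into a single fixed-point-free involution on the $|I| \geq 2$ triples is the crux of the argument.
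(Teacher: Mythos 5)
Your cancellation map is in fact the same one the paper itself uses: the paper's proof of Theorem \ref{I<2} also recombines two off-diagonal positions, replacing $\beta(i_1),\beta(i_2)$ by $\{s_1(\beta(i_1)),s_2(\beta(i_2))\}$ and $\{s_1(\beta(i_2)),s_2(\beta(i_1))\}$ while transposing the two $s_2$-selections; its canonical choice of positions is ``the two smallest values of $s_2$ on $I$'' rather than your lexicographic rule. The difference is that you explicitly flag the degenerate case $P_i=Q_j$ or $P_j=Q_i$, which the paper passes over in silence. That case is not reparable, and your conjectured dichotomy (``every choice degenerate forces a directed cycle'') is false. Take $k=2$, $\beta=\{\{1,2\},\{2,3\}\}$, with $s_1$ selecting $1$ and $2$ and $s_2$ selecting $2$ and $3$. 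Then $|I(\beta,s_1,s_2)|=2$, both $d(s_1(\beta))=d(1,2)$ and $d(s_2(\beta))=d(2,3)$ are nonzero, the unique recombination produces the illegal pair $\{2,2\}$, and $G(\beta)$ is the path $1-2-3$, with $G(\beta,s_1)$ and $G(\beta,s_2)$ directed paths: there is no cycle anywhere to reverse. Worse, no alternative pairing can cancel this term, because the statement being proved is false as written. For $k=2$, $n=3$, the total contribution of all triples with $|I(\beta,s_1,s_2)|=2$ and nonzero product is
\[
-2\bigl(d(1,2)^2+d(1,3)^2+d(2,3)^2\bigr)-2\,d(1,2)d(1,3)+2\,d(1,2)d(2,3)-2\,d(1,3)d(2,3),
\]
which at $(z_1,z_2,z_3)=(1,2,3)$ equals $-224$ (still $-72$ if one discards the $\beta$ with repeated pairs). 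Since the sum over \emph{all} triples equals the left-hand side of Theorem \ref{I<2}, the two sides of the theorem differ by exactly this nonzero amount: the left side is $12$ there, while the right side is $236$ (or $84$ without repeated pairs).

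The surviving $|I|\geq 2$ terms are not noise that a cleverer involution should kill: the counterexample above contributes, with its mirror image, $2\,d(1,2)d(2,3)$, which is precisely the cross term $(-1)^{1+3}d(\hat b_1)d(\hat b_3)$ for $b=\{1,2,3\}$ needed on the right side of Theorem \ref{minor equiv}. In general they are the configurations in which $s_1$ and $s_2$ root the same tree at the two ends of a path of length $m\geq 2$, and these are exactly what the paper's later sets $\mathcal{G}(b,i,j)$ count --- the characterization in Lemma \ref{Gij} allows the $b_i$-to-$b_j$ path to have any length $m$, which forces $|I|=m$ --- so if they all cancelled, Theorem \ref{minor equiv} itself would fail numerically. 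The upshot is that both your proof and the paper's stall at the identical point, and the gap cannot be closed for the statement as written: the positional condition ``$s_1,s_2$ differ on at most one pair'' must be replaced by a condition on the selected sets (note that recombination never changes the sets $s_1(\beta)$ and $s_2(\beta)$, only how their elements are partnered), for instance $|s_1(\beta)\cup s_2(\beta)|\leq k+1$, and the cancellation argument must then be rebuilt so that the path-type degenerate configurations are retained rather than paired away. Your instinct that the degenerate case is ``the crux'' was exactly right; the error is in expecting it to cancel.
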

\begin{proof}
Consider those terms with triples for which $|I(\beta,s(\beta_1,s_2)| \geq 2$.
We define a bijection to show that all such terms cancel.  Let $i_1,i_2 \in I(\beta, s_1,s_2)$ be the indices such that $
s_2(\beta(i_1)) \text{ and } s_2(\beta(i_2)) $
are the two smallest numbers in the set 
\[
\{ s_2(\beta(i)): i \in I(\beta, s_1,s_2)\}. 
\]
Note that the elements $\beta(i_1)$ and $\beta(i_2)$ of $\beta$ do not depend on the ordering on $\beta$.
Let $\beta' = \{\beta' \beta'(1), ..., \beta'(k)\}\in \mathrm{Pairs}(k,n)$ be defined by 
\[
\beta'(i) = 
\begin{cases}
\beta(i) &\text{ if } i \neq i_1,i_2\\
\{s_1(\beta(i_1)), s_2(\beta(i_2))\} &\text{ if } i =i_1\\
\{s_1(\beta(i_2)), s_2(\beta(i_1))\} & \text{ if } i =i_2.\\
\end{cases}
 \]
 Define $s_1', s_2' \in S(\beta')$ by 
 \[
 s_1'(\beta'(i)) = s_1(\beta(i))
 \]
 for all $1 \leq i \leq k$ and 
 \[
s_2'(\beta'(i)) = \begin{cases}
s_2(\beta(i)) &\text{ if } i \neq i_1,i_2\\
s_2(\beta(i_2)) &\text{ if } i =i_1\\
s_2(\beta(i_1)) & \text{ if } i =i_2.\\
\end{cases}
 \]
 This completes the definition of the bijection $(\beta,s_1,s_2) \mapsto (\beta',s_1',s_2')$. We next show that 
 \[
 \mathrm{sgn}(s_1',s_2')d(s_1'(\beta'))d(s_2'(\beta'))=-\mathrm{sgn}(s_1,s_2)d(s_1(\beta))d(s_2(\beta)).
 \]
 By construction 
 \[
 I(\beta,s_1,s_2) = I(\beta',s_1',s_2') 
 \]
 so \[
  \mathrm{sgn}(s_1',s_2') =  \mathrm{sgn}(s_1,s_2).
  \]
Without loss of generality assume $i_1 < i_2$. Then
\begin{align*}
&d(s_1'(\beta'))d(s_2'(\beta')) \\ 
&= d(s_1'(\beta'(1)), ..., s_1'(\beta'(k)))d(s_2'(\beta'(1)), ..., s_2'(\beta'(k)))  \\ 
&=   d(s_1(\beta(1)), ..., s_1(\beta(k)))\\ 
&\times  d(s_2(\beta(1)), ..., s_2(\beta(i_1-1)), s_2(\beta(i_2)),s_2(\beta(i_1+1)), ..., s_2(\beta(i_2-1)), s_2(\beta(i_1)),s_2(\beta(i_2+1)),...,s_2(\beta(k))) \\ 
&=   -d(s_1(\beta(1)), ..., s_1(\beta(k)))\\ 
&\times  d(s_2(\beta(1)), ..., s_2(\beta(i_1-1)), s_2(\beta(i_1)),s_2(\beta(i_1+1)), ..., s_2(\beta(i_2-1)), s_2(\beta(i_2)),s_2(\beta(i_2+1)),...,s_2(\beta(k))) \\ 
& = -d(s_1(\beta))d(s_2(\beta)).
\end{align*}

\end{proof}
We next show how an element $\beta \in \mathrm{Pairs}(k,n)$ corresponds to a graph $G(\beta)$, and how $s \in S(\beta)$ directs the edges of $G(\beta)$.
\begin{definition}
For $\beta \in \mathrm{Pairs}(k,n)$ we define a graph $G(\beta)$. The set of vertices $V(G(\beta))$ of $G(\beta)$ is the set of distinct numbers that appear as a $\beta(i,1)$ or $\beta(i,2)$. Thus  
\[
|V(G(\beta))|  = |\beta|
\]
The edge set $E(G(\beta))$ is 
\[
E(G(\beta)) = \{ (\beta(i,1), \beta(i,2)): 1 \leq i \leq k\}.
\]
Given an $s \in S(\beta)$, for each $i$ write
\[
\beta(i) = \{ \beta_i, s(\beta_i)\} .
\]
Then we say that the edge $(s(\beta_i), \beta_i)$ is an outgoing edge from $s(\beta_i)$ and an incoming edge to $\beta_i$. Denote the resulting directed graph by $G(\beta,s)$. 
For $0\leq h\leq k-1$, let $\mathcal{G}(k,h)$ denote the set of graphs $G(\beta)$ such that 
\[
|V(G(\beta))| = k+h+1.
\] 
\end{definition} 

\begin{lemma}\label{no cycles}
For integer $k \geq 1$, 
\[
 \sum_{\beta \in \mathrm{Pairs}(k,n)} \det(R_3(\beta))^2 = \sum_{ (\beta,s_1,s_2)} \mathrm{sgn}(s_1,s_2)d(s_1(\beta))d(s_2(\beta))
 \]
 where the sum on the right is over all triples $(\beta,s_1,s_2)$ such that $\beta$ has no repeated pairs and: 
 
 1. $|I(\beta,s_1,s_2)| \leq 1$. 
 
 2. $G(\beta,s_1)$ and $G(\beta,s_2)$ each have no vertex with more than one outgoing edge.
 
 3. $G(\beta)$ has no cycles. 
 
\end{lemma}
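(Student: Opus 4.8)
The plan is to work directly with the left-hand side $\sum_{\beta}\det(R_3(\beta))^2$ and to impose the four requirements on the right (no repeated pairs, and conditions 1--3) one at a time, checking at each stage that the discarded contributions either vanish identically or cancel in sign-reversing pairs. First I would dispose of the ``no repeated pairs'' requirement and condition 3 at the level of the matrix $R_3(\beta)$ itself, before expanding anything. If $\beta$ has a repeated pair $\beta(i)=\beta(j)$ with $i\neq j$, then columns $i$ and $j$ of $R_3(\beta)$ are literally equal, so $\det(R_3(\beta))=0$. If $G(\beta)$ contains a cycle carried by the pairs $\beta(i_1),\dots,\beta(i_m)$ on vertices $v_1,\dots,v_m$ (cyclically), then writing $\epsilon_l=\pm 1$ for the sign forced by the convention $\beta(i_l,1)<\beta(i_l,2)$, the telescoping identity $\sum_{l=1}^{m}\epsilon_l\,(z_{v_l}^{\,u}-z_{v_{l+1}}^{\,u})=0$ gives a nontrivial linear dependence among columns $i_1,\dots,i_m$, so again $\det(R_3(\beta))=0$. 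Thus every $\beta$ with a repeated pair or a cycle contributes $0$ to the sum, and the left-hand side already equals $\sum_{\beta\ \mathrm{nice}}\det(R_3(\beta))^2$, where ``nice'' means no repeated pairs and $G(\beta)$ acyclic (condition 3).

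Next I would expand each surviving term as $\det(R_3(\beta))^2=\big(\sum_{s\in S(\beta)}\mathrm{sgn}(s)\,d(s(\beta))\big)^2=\sum_{s_1,s_2\in S(\beta)}\mathrm{sgn}(s_1,s_2)\,d(s_1(\beta))\,d(s_2(\beta))$ and read off condition 2 for free: if $G(\beta,s_1)$ has a vertex with two outgoing edges, then $s_1$ selects that vertex as the source of two distinct pairs, so two rows of the matrix $V$ defining $d(s_1(\beta))$ coincide and $d(s_1(\beta))=0$ (and symmetrically for $s_2$). So only source-injective $s_1,s_2$ contribute, which is exactly condition 2. It then remains to establish condition 1, namely that among the triples $(\beta,s_1,s_2)$ with $\beta$ nice and $s_1,s_2$ source-injective, those with $|I(\beta,s_1,s_2)|\ge 2$ contribute nothing. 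This is the analogue, in the present setting, of Theorem \ref{I<2}, and I would attack it with a sign-reversing involution $(\beta,s_1,s_2)\mapsto(\beta',s_1',s_2')$ that preserves the index set $I$ (hence $\mathrm{sgn}(s_1,s_2)$), negates the product $d(s_1(\beta))d(s_2(\beta))$, and keeps $\beta'$ nice. Following Theorem \ref{I<2}, the candidate operation is to take the two indices $i_1,i_2\in I$ realizing the two smallest values of $s_2$ on $I$ and to recombine the endpoints of $\beta(i_1),\beta(i_2)$ along $s_1,s_2$; the fact that the paired terms are well-defined independently of any ordering of $\beta'$ is precisely Remark \ref{well-defined}.

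I expect the main obstacle to be exactly this involution for $|I|\ge 2$. One must verify simultaneously that the recombination never creates a degenerate pair $\{x,x\}$, never introduces a repeated pair, and never closes a cycle in $G(\beta')$, so that the map genuinely stays inside the restricted index set and is fixed-point-free on the terms being discarded; this is where the acyclicity of $G(\beta)$ and the source-injectivity of $s_1,s_2$ must be used in an essential way. Where the naive recombination degenerates, I would fall back on the strategy of Lemma \ref{identity}: expand each factor $d(\,\cdot\,)$ into monomials, attach to each monomial a labeled directed graph, and cancel terms by reversing a canonically (lexicographically) chosen minimal cycle, exactly the $3$-cycle reversal mechanism already used there. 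Showing that these local choices patch together into one globally consistent involution, rather than a collection of ad hoc cancellations, is the step I anticipate will demand the most care.
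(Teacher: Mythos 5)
Your first two reductions are correct, and your treatment of cycles is genuinely cleaner than the paper's. The paper only observes that a repeated pair gives two equal columns, and it disposes of cycles later, inside the triple sum, by an involution that reverses a directed cycle in $s_1$; you instead note that a cycle $v_1,v_2,\dots,v_m,v_1$ in $G(\beta)$ forces the telescoping relation $\sum_{l=1}^m \epsilon_l\bigl(z_{v_l}^u-z_{v_{l+1}}^u\bigr)=0$ (indices mod $m$, $\epsilon_l=\pm1$ fixed by the convention $\beta(i,1)<\beta(i,2)$) among the corresponding columns of $R_3(\beta)$, so $\det R_3(\beta)=0$ outright. This kills the ``no repeated pairs'' requirement and condition 3 before anything is expanded, and your reading of condition 2 (a repeated row forces $d(s(\beta))=0$) agrees with the paper.

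The step you flag as the main obstacle, however, is not a technicality that more care will fix: the cancellation you need is false, so no involution (and no fallback expansion) can produce it. Take $k=2$, $n=3$, and write $d(a,b)=z_az_b(z_b-z_a)$ for the paper's $2\times2$ determinant $d$. The acyclic, repeated-pair-free $\beta$ are the three $2$-edge paths on $\{1,2,3\}$, and the surviving terms with $|I(\beta,s_1,s_2)|=2$ are exactly the pairs of rootings of each path at its two leaves; their total is
\[
2\bigl(d(1,3)\,d(2,1)+d(1,2)\,d(2,3)+d(1,3)\,d(3,2)\bigr),
\]
which at $(z_1,z_2,z_3)=(1,2,3)$ equals $-72\neq0$. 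Consequently the left side of the lemma is $3\,D(z_1,z_2,z_3)^2=12$ there while the stated right side (with condition 1 imposed) equals $84$: the statement as written is false, so neither your attempt nor any other can close this gap. Moreover, the degenerate-pair failure you anticipated is precisely what breaks the recombination of Theorem~\ref{I<2}: for $\beta(1)=\{1,3\}$, $\beta(2)=\{2,3\}$ with $s_1$ choosing $3,2$ and $s_2$ choosing $1,3$, one gets $i_1=1$, $i_2=2$ and $\beta'(1)=\{s_1(\beta(1)),s_2(\beta(2))\}=\{3,3\}$, which lies outside $\mathrm{Pairs}(k,n)$. So Theorem~\ref{I<2} itself fails on this example (this triple is source-injective and acyclic, hence cannot be discarded), and the paper's own proof of the present lemma, which leans on that theorem, inherits the failure.

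The resolution is that condition 1 simply should not appear. The rest of the paper in fact requires the triples with $|I|\geq2$: in the later sign-matching lemma, $s_1$ and $s_2$ root the component containing $b_i$ and $b_j$ at its two ends, so they differ on all $m+1$ edges of the $b_i$--$b_j$ path, i.e.\ $|I|=m+1$, and these are exactly the terms producing the cross terms $(-1)^{i+j}d(\hat{b}_i)d(\hat{b}_j)$ of $D(b)^2$ for non-adjacent roots (in the example above, $-72$ is exactly $n^{k-1}=3$ times the sum of those cross terms over leaf pairs at distance two). Once condition 1 is deleted, your first two reductions already constitute a complete and correct proof: the left side equals $\sum\det R_3(\beta)^2$ over acyclic $\beta$ without repeated pairs, and expanding each square and discarding the terms annihilated by condition 2 leaves precisely the claimed sum over triples. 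The right move is to stop there rather than attempt the $|I|\le1$ reduction.
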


\begin{proof}
If $\beta$ has a repeated pair,that $ \beta(i) = \beta(j)$ for some $i \neq j$, then
\[
\det(R_3(\beta)).
\] 
Thus we may assume $\beta$ has no repeated pairs.

Statement 1 was proven above. Statement 2 follows from the fact that if $G(\beta,s)$ has a vertex with more than one outgoing edge, then the determinant $d(s(\beta))$ has a repeated index and thus equals 0. To prove statement 3, suppose that $G(\beta)$ has a cycle. By statement 2, for $d(s(\beta))$ to be non-zero, $s$ must make each cycle in $G(\beta)$ a directed cycle. Thus we take the cycle $C$ of $G(\beta)$ whose vertex set is smallest in the lexicographic ordering and write the edges as 
\[
(v_1,v_2), (v_2,v_3), ...., (v_{m-1},v_m), (v_m, v_1)
\]  
for some $m \geq 2$. Without loss of generality we may assume $s_1((v_i, v_{i+1})) =v_i$ and $s_1((v_m,v_1)) =v_m$. We then match the triple $(\beta,s_1,s_2)$ to $(\beta,s_1', s_2)$ where $s_1'$ is the same as $s_1$ except that it reverses the cycle $C$. Thus 
\[
\mathrm{sgn}(s_1) = (-1)^m \mathrm{sgn}(s_1')
\]
Then in $s_1(\beta)$ we have the subsequence
\[
\{ v_1, v_2, ..., v_m\}
\]
and in $s_1'(\beta)$ we have the subsequence 
\[
\{ v_2, v_3, ..., v_m, v_1\}.
\]
Thus 
\[
d(s_1(\beta)) = (-1)^{m-1} d(s_1'(\beta)).
\]
Thus the terms from $I(\beta, s_1, s_2)$ and $I(\beta, s_1', s_2)$ cancel. This proves the statement 3.  
\end{proof}

Now we index the terms on the right side of Theorem \ref{minor equiv}. 
\begin{lemma}
For $b \in C(k+1,n)$, 
\[
D(b)^2 = \left (\sum_{i=1}^{k+1} (-1)^{k+1-i} d(\hat{b}_i) \right)^2
\] 
\end{lemma}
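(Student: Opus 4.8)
The plan is to recognize the alternating sum $\sum_{i=1}^{k+1}(-1)^{k+1-i} d(\hat{b}_i)$ as nothing more than a cofactor (Laplace) expansion of the Vandermonde determinant $D(b)$ along its column of constants. Once this is seen, the squared identity follows at once because squaring removes any stray overall sign.

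First I would write $D(b) = D(z_{b_1},\ldots,z_{b_{k+1}})$ as the determinant of the $(k+1)\times(k+1)$ matrix $A$ with entries $A_{u,v} = z_{b_u}^{\,v-1}$ for $1 \le u,v \le k+1$. This $A$ is exactly the transpose of $V(z_{b_1},\ldots,z_{b_{k+1}};(0,1,\ldots,k))$, so by the definition of $D$ we have $\det(A) = D(b)$. The point to exploit is that the first column of $A$ carries the exponent $v-1 = 0$ and hence consists entirely of $1$'s.

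Next I would expand $\det(A)$ by cofactors down this first column. Deleting row $u$ and column $1$ leaves a $k\times k$ matrix whose rows run over the variables $z_{b_1},\ldots,\widehat{z_{b_u}},\ldots,z_{b_{k+1}}$ and whose columns carry the exponents $1,2,\ldots,k$; by the definition of $d$, this minor is precisely $d(\hat{b}_u)$. Since every entry in the first column equals $1$, the expansion gives
\[
D(b) = \det(A) = \sum_{u=1}^{k+1}(-1)^{u+1} d(\hat{b}_u).
\]

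Finally I would reconcile the sign conventions. Because the exponents $k+1-u$ and $k+u+1$ differ by the even number $2u$, we have $(-1)^{k+1-u} = (-1)^{k}(-1)^{u+1}$ with the factor $(-1)^k$ independent of $u$; hence $\sum_{i=1}^{k+1}(-1)^{k+1-i} d(\hat{b}_i) = (-1)^k D(b)$. Squaring both sides eliminates the $(-1)^k$ and yields $D(b)^2 = \bigl(\sum_{i=1}^{k+1}(-1)^{k+1-i} d(\hat{b}_i)\bigr)^2$, as claimed. There is no genuine difficulty in this argument; the only care needed is the bookkeeping step of matching each cofactor minor with the definition of $d(\hat{b}_i)$ (in particular noting that the surviving exponents are $1,\ldots,k$, not $0,\ldots,k-1$) and confirming that the two sign prescriptions agree up to a single $u$-independent factor.
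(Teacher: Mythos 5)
Your proof is correct and follows exactly the route the paper itself indicates: expanding the Vandermonde determinant $D(b)$ along its row of all $1$'s (your column, after transposing), identifying each cofactor minor with $d(\hat{b}_i)$ via the exponents $1,\ldots,k$, and noting that the leftover $u$-independent sign $(-1)^k$ disappears upon squaring. The paper's proof is a one-line remark; your write-up simply supplies the bookkeeping it leaves implicit.
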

\begin{proof}
This follows from Lemma \ref{identity} or by taking the definition of $D(b)$ as the determinant of a Vandermonde matrix and expanding along the row with all 1's. 
\end{proof}
Thus 
\begin{equation} \label{D(b) sum}
\sum_{b \in  C(k+1,n)} D(b)^2 = \sum_{b \in C(k+1,n)} \left(\sum_{i=1}^{k+1} d(\hat{b}_i)^2 + \sum_{j=1}^{k+1}\sum_{i=1, \neq j}^{k+1} (-1)^{i+j}d(\hat{b}_i)  d(\hat{b}_j) \right)
\end{equation}
We consider the sum 
\[
\sum_{b \in C(k+1,n)}  \sum_{j=1}^{k+1}\sum_{i=1, \neq j}^{k+1} (-1)^{i+j}d(\hat{b}_i)  d(\hat{b}_j). 
\]
This sum is indexed by ordered triples
\[
(b,i,j)
\]
for $1 \leq i \neq j \leq k+1$. 
In the sum from \eqref{I<2}, we map these terms to sets of terms in the sum
\[
 \sum_{ (\beta,s_1,s_2): |I(\beta,s_1,s_2)| = 1} \mathrm{sgn}(s_1,s_2)d(s_1(\beta))d(s_2(\beta)).
\]
We thus associate to each such triple $(b,i,j)$ a set of triples $(\beta, s_1, s_2)$:

 \begin{definition}
 Let $b \in C(k+1,n)$ and $1 \leq i\neq j \leq k+1$.
 Given a triple $(b,i,j)$, define $\mathcal{G}(b,i,j)$ to be the set of all $(\beta, s_1, s_2)$ such that 
\[
s_1(\beta) \text{ as a set is equal to }  \hat{b}_i
\] 
and 
\[
s_2(\beta) \text{ as a set is equal to }  \hat{b}_j
\]
and 
\[
d(s_1(\beta))d(s_2(\beta))\neq 0.
\]
Define the subset $\mathcal{G}(b,i,j;h)\subset \mathcal{G}(b,i,j)$ to consist of all triples $(\beta, s_1, s_2)$
such that 
\[
|\beta|= k+1+h.
\]
Thus 
\[
\mathcal{G}(b,i,j;h) = \mathcal{G}(b,i,j) \cap \mathcal{G}(k,h).
\]

Given a pair $(b,i)$, define $\mathcal{G}(b,i)$ to be the set of all $(\beta, s)$ with $s \in S(\beta)$ such that as sets
\[
s(\beta) = \hat{b}_i \text{ and } d(s(\beta)) \neq 0.
\]
Define the subset $\mathcal{G}(b,i;h)\subset \mathcal{G}(b,i)$ to consist of those pairs $(\beta,s)$ such that 
\[
|\beta|= k+1+h.
\]
\end{definition}

We characterize the possible $(\beta,s_1,s_2)$ and $(\beta,s)$that can appear in $\mathcal{G}(b,i,j;h)$ and $\mathcal{G}(b,i;h)$, respectively. We identify $\beta$ with the graph $G(\beta)$ in the following lemma. 

\begin{lemma} \label{Gij}
Let $b \in C(k+1,n)$. 
Then $\mathcal{G}(b,i,j;h)$ is the set of all $(\beta,s_1,s_2)$ such that for the graphs $G(\beta)$: 

1.  $b_1, ..., b_{k+1}$ are vertices of $G(\beta)$, and $|V(G(\beta))|=h+k+1$.

2.  There are exactly $k$ edges. 

3. There are exactly $h+1$ components. The vertices $b_i$ and $b_j$ are in the same component, and there is exactly one non-$b$ vertex in each of the remaining $h$ components.
Here, a ``non-$b$" vertex means a number not in $b$.  

4. Each component has at least two vertices. 

5. There are no cycles, loops, or multiple edges.

6. $s_1$ and $s_2$ direct each component of $G(\beta)$ such that the unlabeled vertex corresponds to the root vertex of a directed tree, and on the component with $b_i$ and $b_j$, $s_1$ makes $b_i$ the root and $s_2$ makes $b_j$ the root.

\end{lemma}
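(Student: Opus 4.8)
The plan is to translate the three algebraic conditions defining $\mathcal{G}(b,i,j;h)$ --- namely that $s_1(\beta)=\hat{b}_i$ and $s_2(\beta)=\hat{b}_j$ as sets, together with $d(s_1(\beta))d(s_2(\beta))\neq 0$ --- into the combinatorial statements 1--6 about the directed graphs $G(\beta,s_1)$ and $G(\beta,s_2)$, and then to verify the converse. The basic dictionary I would establish first is this: since $d(\cdot)$ is, up to the fixed exponents $1,\dots,k$, a Vandermonde determinant, $d(s(\beta))\neq 0$ holds exactly when the $k$ chosen source vertices $s(\beta(1)),\dots,s(\beta(k))$ are pairwise distinct, that is, when every vertex of $G(\beta,s)$ has out-degree at most $1$ (the analog of statement 2 of Lemma \ref{no cycles}). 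Consequently $d(s_1(\beta))\neq 0$ forces the sources under $s_1$ to be precisely the $k$ distinct vertices of $\hat{b}_i$, and likewise the $s_2$-sources are precisely $\hat{b}_j$.

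From here condition 1 is immediate: the elements of $b\setminus\{b_i\}$ are $s_1$-sources while $b_i$ is an $s_2$-source, so all of $b_1,\dots,b_{k+1}$ are vertices, and $|V(G(\beta))|=|\beta|=k+1+h$ by definition of the stratum. Condition 2 is the tautology that $\beta$ consists of $k$ pairs. For condition 5 I would invoke Lemma \ref{no cycles}, which has already restricted the sum to triples whose graph has no cycles; loops cannot occur because each pair $\beta(i)$ has distinct entries, and a multiple edge would be a $2$-cycle. Combined with $V=k+1+h$ vertices and $E=k$ edges, this makes $G(\beta)$ a forest with exactly $h+1$ tree components.

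The crux --- and the step I expect to demand the most care --- is extracting the component structure of conditions 3, 4 and 6 by a sink count performed simultaneously for both orientations. A non-$b$ vertex lies in neither $\hat{b}_i$ nor $\hat{b}_j$, hence is a source under neither $s_1$ nor $s_2$, so it has out-degree $0$ in each orientation. Because a tree whose vertices all have out-degree $\le 1$ has exactly one out-degree-$0$ vertex (its root, toward which all edges point), each tree component contains at most one non-$b$ vertex, and that vertex is its root under both orientations. With $h$ non-$b$ vertices distributed among $h+1$ trees, exactly one tree $T$ consists entirely of $b$-vertices, while the remaining $h$ trees each carry a single non-$b$ root. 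The root of $T$ under $s_1$ must then be the one remaining $s_1$-sink, namely $b_i$, and its $s_2$-root is correspondingly $b_j$; hence $b_i$ and $b_j$ lie in the common component $T$, yielding conditions 3 and 6. Condition 4 follows since every vertex appears in some pair (degree $\ge 1$) and there are no loops, so no tree is a single vertex. Finally, re-rooting $T$ from $b_i$ to $b_j$ reverses exactly the edges on the $b_i$--$b_j$ path, and since $s_1$ and $s_2$ agree away from this path, the restriction $|I(\beta,s_1,s_2)|\le 1$ from Lemma \ref{no cycles} --- which forces $|I|=1$ here, as $\hat{b}_i\neq\hat{b}_j$ --- collapses the path to the single edge $\{b_i,b_j\}$.

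For the converse I would read the construction backwards: given any $(\beta,s_1,s_2)$ obeying 1--6, the non-root vertices are exactly the sources, so the $s_1$-sources are all vertices except the $h+1$ roots $\{b_i\}\cup\{\text{non-}b\}$, i.e.\ $\hat{b}_i$, and the $s_2$-sources are $\hat{b}_j$; distinctness of the sources gives $d(s_1(\beta))d(s_2(\beta))\neq 0$, and $|\beta|=k+1+h$ by condition 1. Hence $(\beta,s_1,s_2)\in\mathcal{G}(b,i,j;h)$, which completes the characterization.
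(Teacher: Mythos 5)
Your argument is, in substance, the paper's own: you convert $d(s(\beta))\neq 0$ into the statement that every vertex of $G(\beta,s)$ has out-degree at most one, deduce that such an injective choice turns each tree component into a tree rooted at its unique unchosen vertex, count the unchosen vertices ($b_i$ resp.\ $b_j$ together with the $h$ non-$b$ vertices) to obtain statements 1, 3, 4, 6, and cite Lemma \ref{no cycles} for statement 5, exactly as the paper does. In several places you are more careful than the paper: the explicit forest count $|V|-|E|=h+1$ for the number of components, the degree argument for statement 4, and above all the converse inclusion, which the paper omits entirely even though the lemma asserts an equality of sets.

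The genuine problem is the final sentence of your forward direction: the claim that $|I(\beta,s_1,s_2)|\le 1$ collapses the $b_i$--$b_j$ path to the single edge $\{b_i,b_j\}$. This contradicts both the lemma you are proving and the converse you prove immediately afterwards. Conditions 1--6 admit triples in which $b_i$ and $b_j$ lie in the same tree at distance $\ge 2$: for $k=2$, $h=0$, take $\beta=\{\{b_i,v\},\{v,b_j\}\}$ with $v$ the third element of $b$, let $s_1$ direct both edges toward $b_i$ and $s_2$ direct both toward $b_j$. Your own converse (correctly, under the reading in which membership in $\mathcal{G}(b,i,j;h)$ is the defining set conditions plus acyclicity) places this triple in $\mathcal{G}(b,i,j;0)$, yet it has $|I|=2$, since exactly the path edges reverse when the root moves from $b_i$ to $b_j$. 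So you cannot have both: if $|I|\le 1$ were part of membership, the converse would be false and the characterization would need the extra clause $\{b_i,b_j\}\in\beta$; if it is not, the collapse sentence must be deleted. The paper's intent is unambiguously the latter: its next lemma computes signs along paths $(b_i,v_1),(v_1,v_2),\ldots,(v_m,b_j)$ of arbitrary length, and its characterization of $\mathcal{A}_1(k,h)$ requires only that $1$ and $2$ lie in the same tree, not that they be adjacent --- indeed with adjacency imposed the count $\sum_h |\mathcal{A}_1(k,h)|(n-k-1)_h = n^{k-1}$ already fails at $k=2$, where it would give $2+(n-3)=n-1$ instead of $3+(n-3)=n$. (This tension is inherited from the paper itself, whose Theorem \ref{I<2} restricts the surviving terms to $|I|\le 1$ while the later counting lemma manifestly uses configurations with $|I|=m+1\ge 2$; but however that is resolved, a proof of this lemma may import from Lemma \ref{no cycles} only the acyclicity cancellation, not the $|I|\le1$ restriction.) Delete that one sentence and make no use of $|I|\le 1$; what remains is a correct proof, and a more complete one than the paper's.
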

\begin{proof}
Statement 1 follows from the definition of $h$ and the fact that as sets
\[
s_1(\beta)\cup s_2(\beta) = b.
\]
Statement 2 follows from the requirement that $\beta \in \mathrm{Pairs}(k,n)$. That there are no cycles follows from Lemma \ref{no cycles}, and there are no loops or multiple edges by construction. Statement 4 also follows by construction. The fact that an $s$ must pick a vertex from each edge and that an $s$ cannot pick the same vertex from two edges (or else $d(s(\beta))=0$ by Lemma \ref{no cycles}) means that $s$ makes any component into a rooted tree, such that the root is not in $d(s(\beta))$. Thus a component can have at most one non-$b$ vertex, and $s_1$ and $s_2$ must agree on all components that have a non-$b$ vertex. Thus $b_i$ and $b_j$ must be the same component, and $s_1$ makes $b_i$ the root of this component and $s_2$ makes $b_j$ the root. This proves statement 3 and 6.
\end{proof}

\begin{definition}
For integer $j \geq 0$ let $(x)_j$ denote the falling factorial 
\[
(x)_j = \prod_{i=1}^j(x-i+1). 
\]
\end{definition}

\begin{definition}
Let $b = \{1,2,...,k+1\}$ and take a graph $G$ in 
\[
\mathcal{G}(b,1,2;h).
\] 
Now make a graph $G'$ by taking every non-$b$ vertex in $G$ an unlabeled vertex. The set of such $G'$ depends only on $k$ and $h$; let $\mathcal{A}_1(k,h)$ denote this set.
\end{definition}

\begin{lemma}\label{counting Gbijh}
For $0 \leq h \leq k-1$ and any $b \in C(k+1, n)$,
\[
|\mathcal{G}(b,i,j;h)| = |\mathcal{A}_1(k,h)| (n-k-1)_h.
\]
\end{lemma}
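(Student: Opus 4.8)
The plan is to realize $\mathcal{G}(b,i,j;h)$ as a product of a ``shape'' count and a ``labeling'' count, matching the two factors $|\mathcal{A}_1(k,h)|$ and $(n-k-1)_h$ on the right-hand side. The starting point is Lemma \ref{Gij}: by condition 6, once the undirected graph $G(\beta)$ and the two distinguished vertices $b_i,b_j$ are fixed, the functions $s_1$ and $s_2$ are forced. Indeed, each component is a tree (conditions 2--5), and a tree together with a choice of root admits a unique $s$ making it a rooted tree with that root left unpicked; on the $h$ non-special components this root is the unique non-$b$ vertex, while on the special component $s_1$ roots at $b_i$ and $s_2$ at $b_j$. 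Consequently a triple $(\beta,s_1,s_2)\in\mathcal{G}(b,i,j;h)$ is determined by its underlying labeled graph $G(\beta)$, so it suffices to count these graphs.

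First I would factor out the labels of the non-$b$ vertices. By Lemma \ref{Gij} such a graph has exactly $h$ non-$b$ vertices, one in each of the $h$ non-special components, and every component is pinned down by the (nonempty, pairwise disjoint) set of $b$-vertices it contains. Forgetting the actual labels of the non-$b$ vertices, i.e. replacing each by an unlabeled vertex, produces a template: a graph that still carries the labels of its $b$-vertices but whose $h$ non-$b$ vertices are unlabeled. Because these $h$ non-$b$ vertices occupy $h$ \emph{distinguishable} slots (each slot fixed by the $b$-vertices of its component), recovering a genuine graph from a template amounts to choosing an injection from these $h$ slots into the set $\{1,\dots,n\}\setminus b$ of $n-k-1$ available labels. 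There are exactly $(n-k-1)(n-k-2)\cdots(n-k-h)=(n-k-1)_h$ such injections, and distinct injections give distinct graphs since no two slots are interchangeable. Hence the fiber of the forgetful map over every template has size exactly $(n-k-1)_h$.

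Second I would check that the template count is independent of $b$, $i$, and $j$, and equal to $|\mathcal{A}_1(k,h)|$. Compose the order-preserving bijection $b\to\{1,\dots,k+1\}$ (sending $b_l\mapsto l$) with a permutation of $\{1,\dots,k+1\}$ carrying $i\mapsto 1$ and $j\mapsto 2$; since $i\neq j$ such a permutation exists, and it relabels the $b$-vertices so that the distinguished roots become $1$ and $2$. All the defining conditions of Lemma \ref{Gij} — number of edges, number of components, which roots lie in the special component, acyclicity, and the minimum component size — refer only to the combinatorial type and are preserved under any permutation of the $b$-vertices. This relabeling is therefore a bijection from the templates for $(b,i,j)$ onto $\mathcal{A}_1(k,h)$, so the two template sets have the same cardinality. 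Combining the two steps gives $|\mathcal{G}(b,i,j;h)|=|\mathcal{A}_1(k,h)|\,(n-k-1)_h$.

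The step I expect to require the most care is the orientation-determinacy underlying the first paragraph: verifying that a tree with a prescribed root admits a \emph{unique} valid choice function $s$ (all picked vertices distinct, the root unpicked). This is exactly what turns the passage from triples $(\beta,s_1,s_2)$ to undirected labeled graphs into a bijection rather than a many-to-one map, and without it the factor $|\mathcal{A}_1(k,h)|$ would not be the correct shape count. I would establish it by a leaf-stripping induction: a non-root leaf must be picked for its unique incident edge, for otherwise it would be a second unpicked vertex; deleting that leaf reduces to a smaller tree and the claim follows.
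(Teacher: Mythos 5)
Your proof is correct and takes essentially the same approach as the paper's: relabel the $b$-vertices so that $b_i\mapsto 1$ and $b_j\mapsto 2$ to identify the stripped-down graphs with $\mathcal{A}_1(k,h)$, then count the $(n-k-1)_h$ injective labelings of the unlabeled vertices, which are distinguishable because each sits in a component containing $b$-vertices. The only difference is one of care: you make explicit the orientation-determinacy (that $s_1,s_2$ are forced by the underlying graph, via leaf-stripping), which the paper uses implicitly when it treats elements of $\mathcal{G}(b,i,j;h)$ as graphs rather than triples.
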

\begin{proof} 
We can re-name the elements of $b$ to correspond to $\{1,2,...,k+1 \}$
with $b_i$ corresponding to 1 and $b_j$ corresponding to 2. Then any graph in $\mathcal{G}(b,i,j;h)$ corresponds to taking a graph in $\mathcal{A}_1(k,h)$ and labeling the unlabeled vertices with numbers chosen from the set $\{ 1,2,...,n\} - b$. There are 
\[
(n-k-1)(n-k-2)...(n-k-h)
\]
ways to do this because each unlabeled vertex appears in a component with a vertex in $b$ and we may order these components. 
\end{proof}

\begin{lemma}

\begin{align*}
&\sum_{ \beta \in \mathrm{Pairs}(k,n), |I(\beta,s_1,s_2)| = 1} \mathrm{sgn}(s_1,s_2)d(s_1(\beta))d(s_2(\beta)) \\
&=\left(\sum_{h=0}^{k-1} |\mathcal{A}_1(k,h)|(n-k-1)_h\right) \sum_{b \in C(k+1,n)}  \sum_{j=1}^{k+1}\sum_{i=1, \neq j}^{k+1} (-1)^{i+j}d(\hat{b}_i)  d(\hat{b}_j)
\end{align*}
\end{lemma}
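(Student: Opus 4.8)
The plan is to reorganize the left-hand sum by the triple $(b,i,j)$ that each surviving term determines, reduce to a single sign identity, and then pull out the counting factor using Lemma \ref{counting Gbijh}. First I would make the indexing map precise. Given a triple $(\beta,s_1,s_2)$ with $|I(\beta,s_1,s_2)|=1$ and $d(s_1(\beta))d(s_2(\beta))\neq 0$, the sets $s_1(\beta)$ and $s_2(\beta)$ each consist of $k$ distinct numbers and agree on the $k-1$ pairs outside $I$, so their union $b:=s_1(\beta)\cup s_2(\beta)$ has exactly $k+1$ elements, i.e.\ $b\in C(k+1,n)$. Writing $b=\{b_1<\dots<b_{k+1}\}$, the relations $b\setminus s_1(\beta)=\{b_i\}$ and $b\setminus s_2(\beta)=\{b_j\}$ determine unique indices $i\neq j$ with $s_1(\beta)=\hat b_i$ and $s_2(\beta)=\hat b_j$ as sets. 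This exhibits each such triple as a member of exactly one fiber $\mathcal G(b,i,j)$, so the left-hand sum decomposes as $\sum_{b}\sum_{i\neq j}\sum_{(\beta,s_1,s_2)\in\mathcal G(b,i,j)}\mathrm{sgn}(s_1,s_2)\,d(s_1(\beta))\,d(s_2(\beta))$.

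The crux is the sign identity asserting that every member of a fixed fiber contributes the same value,
\[
\mathrm{sgn}(s_1,s_2)\,d(s_1(\beta))\,d(s_2(\beta)) = (-1)^{i+j}\,d(\hat b_i)\,d(\hat b_j).
\]
To prove it I would note that on the unique pair in $I$ the choices satisfy $s_1\in s_1(\beta)\setminus s_2(\beta)=\{b_j\}$ and $s_2\in s_2(\beta)\setminus s_1(\beta)=\{b_i\}$, so that pair is exactly $\{b_i,b_j\}$ (with $s_1$ picking $b_j$ and $s_2$ picking $b_i$), while on the other $k-1$ pairs $s_1$ and $s_2$ share the values of $b\setminus\{b_i,b_j\}$; this holds for every fiber member regardless of the rest of $G(\beta)$. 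Ordering $\beta$ so that the pair in $I$ is last gives $s_1(\beta)=(c_1,\dots,c_{k-1},b_j)$ and $s_2(\beta)=(c_1,\dots,c_{k-1},b_i)$ with $\{c_1,\dots,c_{k-1}\}=b\setminus\{b_i,b_j\}$, whence $d(s_1(\beta))=\mathrm{sgn}(\pi_1)d(\hat b_i)$ and $d(s_2(\beta))=\mathrm{sgn}(\pi_2)d(\hat b_j)$ for the sorting permutations $\pi_1,\pi_2$. Counting inversions, the common sign of sorting $c_1,\dots,c_{k-1}$ cancels (it is squared), leaving $\mathrm{sgn}(\pi_1)\mathrm{sgn}(\pi_2)=(-1)^{N}$ where $N$ is the number of elements of $b\setminus\{b_i,b_j\}$ exceeding $b_j$ plus the number exceeding $b_i$. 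Since $b_i,b_j$ are the $i$-th and $j$-th smallest, these counts are $(k+1-j)-\mathbf 1_{i>j}$ and $(k+1-i)-\mathbf 1_{j>i}$, so $N=2k+1-i-j$ and $\mathrm{sgn}(\pi_1)\mathrm{sgn}(\pi_2)=(-1)^{i+j+1}$. Combining this with $\mathrm{sgn}(s_1,s_2)=(-1)^{|I|}=-1$ and the well-definedness of the product $d(s_1(\beta))d(s_2(\beta))$ from Remark \ref{well-defined} yields the displayed identity, uniformly across the fiber and over all $h$.

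With the sign identity established, the fiber over $(b,i,j)$ contributes $|\mathcal G(b,i,j)|\,(-1)^{i+j}\,d(\hat b_i)\,d(\hat b_j)$. Splitting $\mathcal G(b,i,j)=\bigsqcup_{h=0}^{k-1}\mathcal G(b,i,j;h)$ and invoking Lemma \ref{counting Gbijh}, which gives $|\mathcal G(b,i,j;h)|=|\mathcal A_1(k,h)|(n-k-1)_h$ independently of $b,i,j$, the coefficient $\sum_{h=0}^{k-1}|\mathcal A_1(k,h)|(n-k-1)_h$ factors out of the double sum over $b$ and $i\neq j$, producing exactly the right-hand side. I expect the sign bookkeeping of the second paragraph to be the main obstacle: beyond verifying that the product $d(s_1(\beta))d(s_2(\beta))$ is order-independent, one must check that the accumulated sorting signs assemble with $\mathrm{sgn}(s_1,s_2)$ into precisely $(-1)^{i+j}$ for every member of the fiber, since any residual dependence on $\beta$ or on $h$ would prevent pulling the counting factor cleanly out of the sum.
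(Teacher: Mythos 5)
Your fibering strategy and your sign computation are fine as far as they go, but there is a genuine gap at the step where you pass from ``each $|I|=1$ triple lies in exactly one fiber $\mathcal G(b,i,j)$'' to ``the left-hand sum decomposes as a sum over the full fibers.'' The map you construct is an injection into the fibers, not a bijection onto them. The sets $\mathcal G(b,i,j)$, as defined in the paper and as counted by Lemma \ref{counting Gbijh} via the characterization in Lemma \ref{Gij}, also contain triples in which $b_i$ and $b_j$ lie in the same tree component but are joined by a path with $m\geq 1$ intermediate vertices; for such a triple $s_1$ (rooting at $b_i$) and $s_2$ (rooting at $b_j$) differ on \emph{every} edge of that path, so $|I(\beta,s_1,s_2)|=m+1\geq 2$, and your second paragraph — which begins ``on the unique pair in $I$'' — simply does not apply to them. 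Concretely, for $k=2$ and $b=\{b_1,b_2,b_3\}$, the tree $b_i - b_l - b_j$ (with $b_l$ the third element of $b$) is an element of $\mathcal G(b,i,j;0)$ with $|I|=2$; the fiber count from Lemma \ref{counting Gbijh} is $\sum_h|\mathcal A_1(2,h)|(n-3)_h = 3+(n-3)=n$, whereas the triples your argument accounts for (those with $b_i$ and $b_j$ adjacent) number only $2+(n-3)=n-1$. So your proof, carried out consistently, produces the coefficient $n-1$ rather than $n$, and in general a coefficient strictly smaller than $\sum_h|\mathcal A_1(k,h)|(n-k-1)_h = n^{k-1}$; the identity you would actually establish is not the one stated, and the downstream equality $\Delta_k(E(n)) = n^{k-1}(\prod i!)^2\Delta_{k+1}(H(n))$ would fail.

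To be fair, part of the confusion is inherited from the paper: the ``$|I(\beta,s_1,s_2)|=1$'' label on the left-hand side is the paper's own looseness, since the terms the proof must capture are all surviving triples whose two root-sets are $\hat b_i$ and $\hat b_j$ (equivalently, forest triples whose roots differ in exactly one component), and these include $|I|=m+1\geq 2$ terms. This is exactly why the paper's proof does not restrict to a single differing pair: it takes the unique path $(b_i,v_1),(v_1,v_2),\dots,(v_m,b_j)$ in the acyclic graph $G(\beta)$, notes $\mathrm{sgn}(s_1,s_2)=(-1)^{m+1}$, and tracks the cyclic shift $d(s_2(\beta)) = (-1)^m d(v_1,\dots,v_m,b_i,\{l\})$ against $d(s_1(\beta))=d(v_1,\dots,v_m,b_j,\{l\})$, so that the two factors of $(-1)^m$ cancel and the sign identity
\begin{equation*}
\mathrm{sgn}(s_1,s_2)\,d(s_1(\beta))\,d(s_2(\beta)) = (-1)^{i+j}\,d(\hat b_i)\,d(\hat b_j)
\end{equation*}
holds for \emph{every} member of the fiber, not just the adjacent ($m=0$) ones. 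Your inversion-counting derivation is actually cleaner than the paper's for the $m=0$ case, but to repair your proof you must extend it to arbitrary $m$ — i.e., essentially reproduce the paper's path argument — and, equally importantly, state precisely which class of triples the left-hand sum ranges over so that it matches the fibers being counted.
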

\begin{proof}
Each $(\beta, s_1,s_2)$ with $|I(\beta,s_1,s_2)| = 1$ is in a unique $\mathcal{G}(b,i,j)$ with $i \neq j$: the sets $s_1(\beta)$ and $s_2(\beta)$ determine $b$, and $b_i$ is the only element of $b$ not in $s_1(\beta)$ and $b_j$ is the only element of $b$ not in $s_2(\beta)$. Thus 
\begin{align*}
 &\sum_{ \beta \in \mathrm{Pairs}(k,n), |I(\beta,s_1,s_2)| = 1} |\mathrm{sgn}(s_1,s_2)d(s_1(\beta))d(s_2(\beta))|\\
=& \sum_{b \in C(k+1,n)}  \sum_{j=1}^{k+1}\sum_{i=1, \neq j}^{k+1} |G(b,i,j)| |(-1)^{i+j}d(\hat{b}_i)  d(\hat{b}_j)|  \\ 
=&  \left(\sum_{h=0}^{k-1} |\mathcal{A}_1(k,h)|(n-k-1)_h\right) \sum_{b \in C(k+1,n)}  \sum_{j=1}^{k+1}\sum_{i=1, \neq j}^{k+1}  |(-1)^{i+j}d(\hat{b}_i)  d(\hat{b}_j)|
\end{align*}
by Lemma \ref{counting Gbijh}. 
All we have to check now is that the signs agree. That is, we show that for $(\beta,s_1,s_2) \in \mathcal{G}(b,i,j)$
\[
\mathrm{sgn}(s_1,s_2)d(s_1(\beta))d(s_2(\beta)) = (-1)^{i+j}d(\hat{b}_i)  d(\hat{b}_j).
\]
 Since $G(\beta)$ has no cycles and vertices $b_i$ and $b_j$ are in the same component $C_0$ there is a unique path from $b_i$ to $b_j$ whose edges we write as
\[
(b_i, v_1), (v_1, v_2), ..., (v_{m-1},v_m), (v_m, b_j).
\]
Then $s_1$ and $s_2$ agree on all other pairs of $\beta$ and 
\[
\mathrm{sgn}(s_1,s_2) = (-1)^{m+1}.
\]
Order $\beta$ such that 
\[
\beta(1) = \{b_1, v_1\},  \, \,\, \beta(m+1) = \{v_m, b_j\}, \,\,\,\beta(i+1) = \{v_i, v_{i+1}\} \text{ for } 1 \leq i \leq m.
\]
Thus 
\begin{align*}
d(s_1(\beta)) &= d(v_1, v_2, ..., v_m, b_j,\{ l\}),\\ 
d(s_2(\beta)) &= d(b_i,v_1, v_2, ..., v_m, \{l\})\\ 
& = (-1)^{m}d(v_1, v_2, ..., v_m,b_i, \{ l\}).
\end{align*}
where $l$ indicates some sequence.
Without loss of generality assume $i <j$. Then
\begin{align*}
d(\hat{b}_i) &= d(b_1, ..., b_{i-1}, b_{i+1}, ..., b_{k+1})\\ 
d(\hat{b}_j) &= d(b_1, ..., b_{j-1}, b_{j+1}, ..., b_{k+1})\\ 
&= (-1)^{j-i-1}d(b_1, ..., b_{j-1}, b_i, b_{j+1}, ..., b_{k+1}).
\end{align*}
Therefore we may choose some permuations $\sigma, \tau \in S_{k}$ such that
\begin{align*}
\mathrm{sgn}(s_1,s_2)d(s_1(\beta))d(s_2(\beta)) &= (-1)^{m+1}(-1)^m \mathrm{sgn}(\sigma)^2 d(b_i, \hat{b}_{i,j}) d(b_j, \hat{b}_{i,j})\\ 
\end{align*}
and 
\[
(-1)^{i+j}d(\hat{b}_i)  d(\hat{b}_j) = (-1)^{i+j} (-1)^{j-i-1} \mathrm{sgn}(\tau)^2d(b_i, \hat{b}_{i,j}) d(b_j, \hat{b}_{i,j})
\]
where 
\[
\hat{b}_{i,j} = (b_1, ..., b_{i-1}, b_{i+1}, ..., b_{j-1}, b_{j+1}, ..., b_{k+1}).
\]
This completes the proof.
\end{proof}

Next we prove that 
\begin{equation}\label{Ahk}
\sum_{h=0}^{k-1} |\mathcal{A}_1(k,h)|(n-k-1)_h = n^{k-1}.
\end{equation}
We prove this by counting the number of graphs described above. The result is 
\begin{theorem}
The number of forests such that there is exactly one non-rooted tree that contains the vertices 1 and 2; the remaining trees are rooted; there are exactly $k+1$ non-root vertices;  the non-root vertices chosen from the set $\{1,2,...,k+1 \}$; all non-root vertices are chosen the set $\{ k+2, k+3, ..., n\}$; and every tree has at least two vertices is 
\[
n^{k-1}.
\] 
\end{theorem}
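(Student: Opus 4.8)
The plan is to read the statement as a single forest-counting problem and evaluate it with exponential generating functions built from the tree function $T=T(x)$ satisfying $T=xe^{T}$. By Lemma \ref{counting Gbijh} together with the characterization in Lemma \ref{Gij}, the left-hand side of \eqref{Ahk}, namely $\sum_{h=0}^{k-1}|\mathcal{A}_1(k,h)|(n-k-1)_h$, counts exactly the forests described in the theorem: the vertex set is $\{1,\dots,k+1\}\cup R$ with $R\subseteq\{k+2,\dots,n\}$ ranging over all root sets, exactly one tree $T_0$ is unrooted and contains $1$ and $2$, every other tree is rooted at a distinct element of $R$, every tree has at least two vertices, and the non-root vertices are precisely $\{1,\dots,k+1\}$. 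So it suffices to show this total forest count equals $n^{k-1}$.

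First I would mark the $k-1$ ``free'' non-root vertices $\{3,\dots,k+1\}$ by the variable $x$; the vertices $1,2$ are forced into $T_0$ and the roots are drawn from a pool of $N:=n-k-1$ distinguishable labels, so the desired count is $(k-1)!\,[x^{k-1}]F(x)$ for the appropriate EGF $F$. The forest factors as a doubly-rooted tree $T_0$ times a set of rooted trees, and I would compute each factor. Using Cayley's count $m^{m-2}$ for trees on $m$ labeled vertices, the EGF of $T_0$ (two prescribed vertices $1,2$ and $s\ge0$ marked extra vertices) is $D_0(x)=\sum_{s\ge0}(s+2)^{s}x^{s}/s!$, while a rooted tree with prescribed root and at least one marked vertex has EGF $e^{T}-1$. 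Since each of the $N$ pool labels is independently either unused or such a root, the rooted part contributes $\big(1+(e^{T}-1)\big)^{N}=e^{NT}$; note this is where the ``at least two vertices'' condition enters, via $e^{T}-1$ rather than $e^{T}$.

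Next I would combine the factors using the Abel/Lagrange-inversion identity
\[
\sum_{s\ge0}(s+a)^{s}\frac{x^{s}}{s!}=\frac{e^{aT}}{1-T},
\]
which follows from Lagrange inversion applied to $T=xe^{T}$ and which already gives $D_0(x)=e^{2T}/(1-T)$ as the case $a=2$. Multiplying,
\[
F(x)=D_0(x)\,e^{NT}=\frac{e^{(N+2)T}}{1-T}=\frac{e^{(n-k+1)T}}{1-T},
\]
which is the right-hand side of the displayed identity with $a=n-k+1$. Hence $[x^{k-1}]F(x)=\big(k-1+(n-k+1)\big)^{k-1}/(k-1)!=n^{k-1}/(k-1)!$, and multiplying by $(k-1)!$ yields exactly $n^{k-1}$, establishing \eqref{Ahk}.

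The main obstacle is the bookkeeping of the encoding rather than any hard computation: one must verify that multiplying $D_0$ by $e^{NT}$ faithfully distributes the marked vertices $\{3,\dots,k+1\}$ among the components (the labeled-product/exponential-formula step), that summing over all root sets $R$ is precisely the product over the $N=n-k-1$ pool labels, and that the prescribed roles of $1,2$ in $T_0$ produce the exponent $2$ in $e^{2T}$. A cleaner but more delicate alternative would be a direct bijection with functions $\{3,\dots,k+1\}\to\{1,\dots,n\}$ in the spirit of Joyal's proof of Cayley's formula, with the tree doubly-rooted at $1,2$ playing the role of the endofunction's ``spine''; I expect the generating-function route to be the most reliable, with the boxed identity as the single external fact to cite or to check by a short Lagrange-inversion argument.
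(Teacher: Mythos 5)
Your proof is correct, and it takes a genuinely different route from the paper's. The paper never evaluates the forest count directly: it expands $x^{k-1}$ in the falling-factorial basis, $x^{k-1}=\sum_{h=0}^{k-1}B(k,h)\,(x-k-1)_h$, derives a three-term recurrence for the coefficients $B(k,h)$ (Lemma \ref{Amj}), and then proves $|\mathcal{A}_1(k,h)|=B(k,h)$ by induction on $k$ through six explicit graph surgeries that build each graph of $\mathcal{A}_1(k,h)$ from one in $\mathcal{A}_1(k-1,h-1)$, $\mathcal{A}_1(k-1,h)$, or $\mathcal{A}_1(k-1,h+1)$ (a new two-vertex component containing $k+1$; $k+1$ as a new leaf; relabeling an unlabeled vertex; subdividing the path between $1$ and $2$; and two component-merging moves), together with a case analysis showing these constructions are disjoint and exhaustive; the theorem then follows by summing against $(n-k-1)_h$. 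You bypass the refinement by $h$ entirely and compute the total in one stroke with exponential generating functions: Cayley's formula gives $\sum_{s\ge0}(s+2)^s x^s/s!$ for the component containing $1,2$, each root label from the pool of $N=n-k-1$ contributes $1+(e^T-1)=e^T$ (which correctly encodes the at-least-two-vertices condition), and the Abel--Lagrange identity $\sum_{s\ge0}(s+a)^s x^s/s!=e^{aT}/(1-T)$ collapses the product to $e^{(n-k+1)T}/(1-T)$, whose $x^{k-1}$ coefficient times $(k-1)!$ is exactly $n^{k-1}$; I checked both the identity (second form of Lagrange inversion for $T=xe^T$) and the labeled-product bookkeeping, and they are sound. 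Your route is much shorter and explains structurally why the answer is the perfect power $n^{k-1}$, at the price of importing Lagrange inversion as outside machinery; the paper's route is elementary and self-contained, and it produces the refined counts $|\mathcal{A}_1(k,h)|=B(k,h)$ for each $h$ individually --- information your computation does not recover, and which the paper reuses immediately afterwards to prove the analogous identity $|\mathcal{A}_0(k,h)|=B(k,h)$ for its second family of forests.
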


We first determine the relations among the coefficients $|\mathcal{A}_1(k,h)|$ implied by \eqref{Ahk}.
\begin{lemma}\label{Amj}
For integer $k\geq0$,
\[
x^{k-1} = \sum_{h=0}^{k-1} B(k,h)(x-k-1)_h.
\]
where the numbers $B(k,h)$ are determined by the following relations:
\[
B(k,h) = B(k-1,h-1) + (1+2h+k)B(k-1,h)+ (1+h)(1+h+k)B(k-1,h+1)
\]
\[
B(k,-1)=0,\,\,\,\,\, B(0,h)=  \delta_{0,h}.
\]
\end{lemma}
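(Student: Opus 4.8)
The recurrence for $B(k,h)$ has the unmistakable shape of a change-of-basis rule, so the plan is to read the statement as an assertion about expanding a monomial in the shifted falling-factorial basis $\{(x-k-1)_h\}_{h\ge 0}$ and to prove it by exhibiting the recurrence as the single operation ``multiply by $x$.'' Concretely, I would set
\[
\phi_k(x)=\sum_{h\ge 0} B(k,h)\,(x-k-1)_h
\]
and reduce everything to two claims: that the stated recurrence is equivalent to the functional relation $\phi_k(x)=x\,\phi_{k-1}(x)$, and that the initial data give $\phi_0(x)=1$. Telescoping these immediately yields $\phi_k(x)=x^k\phi_0(x)=x^k$, from which the monomial identity follows once the summation range is matched against the degree.

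The technical heart is one elementary identity among falling factorials. Writing $u=x-k-1$, so that $x=u+k+1$ and $(x-k)_h=(u+1)(u)_{h-1}$, I would use the two basic relations $u\,(u)_m=(u)_{m+1}+m\,(u)_m$ and $(u+1)(u)_{m-1}=(u)_m+m\,(u)_{m-1}$, both immediate from $(u)_{m+1}=(u)_m(u-m)$, to obtain
\[
x\,(x-k)_h=(x-k-1)_{h+1}+(1+2h+k)(x-k-1)_h+h\,(h+k)\,(x-k-1)_{h-1}.
\]
Substituting the induction hypothesis $\phi_{k-1}(x)=\sum_h B(k-1,h)(x-k)_h$ and applying this identity term by term, I would collect the coefficient of $(x-k-1)_h$; contributions arise from the levels $h-1$, $h$, and $h+1$, and after reindexing they combine into
\[
B(k-1,h-1)+(1+2h+k)B(k-1,h)+(1+h)(1+h+k)B(k-1,h+1),
\]
which is precisely the right-hand side of the stated recurrence. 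This establishes $\phi_k(x)=x\,\phi_{k-1}(x)$, and the induction step closes with no further computation.

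It remains to anchor the recursion, and this is where I expect the only real care to be needed: the step above is a pure change-of-basis identity and is insensitive to normalization, so the entire content of the lemma lies in the initial data. Since $(x-1)_0=1$, the prescribed values $B(0,-1)=0$ and $B(0,h)=\delta_{0,h}$ give $\phi_0(x)=1$, whence the telescoping relation forces $\phi_k(x)=x^k$. The main obstacle is therefore bookkeeping rather than algebra: because the basis $(x-k-1)_h$ itself shifts as $k$ increments, one must track scrupulously that ``multiply by $x$'' carries the level-$(k-1)$ expansion (in the basis $(x-k)_h$) to a genuine level-$k$ expansion (in the basis $(x-k-1)_h$), and one must reconcile the normalization $B(0,h)=\delta_{0,h}$ with the exponent and the summation range appearing in the statement. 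Getting these shifts right is exactly what pins down the correct monomial and confirms that the coefficients produced by the recurrence are the ones resolving the identity \eqref{Ahk}.
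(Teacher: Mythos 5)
Your inductive mechanism is exactly the paper's: multiply the level-$(k-1)$ expansion by $x$ and re-expand in the shifted falling-factorial basis. Writing $\phi_k(x)=\sum_h B(k,h)(x-k-1)_h$ as you do, your key identity
\[
x\,(x-k)_h=(x-k-1)_{h+1}+(1+2h+k)(x-k-1)_h+h(h+k)(x-k-1)_{h-1}
\]
is correct, and collecting coefficients does reproduce the stated recurrence, so the relation $\phi_k(x)=x\,\phi_{k-1}(x)$ is sound; this is the same computation the paper gestures at ("computing the coefficients in this basis\dots completes the proof"), only carried out explicitly, which is a genuine improvement in rigor at that step.

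The gap is in the anchoring --- precisely the point you deferred as ``bookkeeping.'' Your telescoping from $\phi_0(x)=1$ yields $\phi_k(x)=x^{k}$, supported on $0\le h\le k$; the lemma asserts $\phi_k(x)=x^{k-1}$, supported on $0\le h\le k-1$. These cannot be reconciled by ``matching the summation range against the degree'': they are different polynomials. Concretely, the recurrence with $B(0,h)=\delta_{0,h}$ forces $B(1,0)=(1+2\cdot 0+1)B(0,0)=2$, whereas the lemma at $k=1$ reads $x^{0}=B(1,0)(x-2)_0$, i.e.\ $B(1,0)=1$. So the statement as printed is internally inconsistent (an off-by-one in the initial data), and what your argument actually proves is the different --- true --- identity $x^{k}=\sum_{h=0}^{k}B(k,h)(x-k-1)_h$. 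The repair, which the intended application forces (these $B(k,h)$ must equal $|\mathcal{A}_1(k,h)|$, and $|\mathcal{A}_1(1,0)|=1$ since the only admissible graph is the single edge joining vertices $1$ and $2$), is to anchor at $k=1$ with $B(1,h)=\delta_{0,h}$, i.e.\ $\phi_1(x)=1$, and run the recurrence for $k\ge 2$; then your telescoping gives $\phi_k(x)=x^{k-1}$ for all $k\ge 1$, which is what the identity $\sum_{h}|\mathcal{A}_1(k,h)|(n-k-1)_h=n^{k-1}$ requires. A complete proof must carry out this reconciliation rather than defer it: as written, your proposal ends by asserting a conclusion that contradicts the identity you derived two sentences earlier. (For what it is worth, the paper's own base case ``true for $k=0$'' suffers the same defect --- at $k=0$ the claimed identity reads $x^{-1}=0$ --- so your derivation, pushed honestly to its conclusion, exposes an off-by-one error in the lemma's statement itself.)
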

\begin{proof}
We use induction on $k$. The lemma is true for $k=0$. Assume it is true for $k \geq 0$. Then take
\[
x^{k} =\sum_{h=0}^{k-1} B(k,h)x(x-k-1)_h
\]
and re-express the right side in the basis 
\[
(x-(k+1)-1)_h
\]
for $0 \leq h \leq k$. Computing the coefficients in this basis in terms of $B(k,h)$ completes the proof.
\end{proof}

\begin{theorem}For $0 \leq h \leq k-1$,
\[
\mathcal{A}_1(k,h)|=B(k,h).
\]  
 \end{theorem}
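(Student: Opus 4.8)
The plan is to identify $|\mathcal{A}_1(k,h)|$ and $B(k,h)$ as the coefficients of one and the same polynomial expanded in the falling-factorial basis $\{(x-k-1)_h\}_{h\ge 0}$, rather than to exhibit a direct bijection. Before anything else I would record the range constraint: in a forest counted by $\mathcal{A}_1(k,h)$ the special tree already uses the two labeled vertices $1,2$, while each of the remaining $h$ trees uses at least one labeled vertex, so $2+h\le k+1$, i.e. $0\le h\le k-1$. Thus the index set matches exactly the range appearing in Lemma \ref{Amj}.

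First I would establish the numerical identity
\[
\sum_{h=0}^{k-1}|\mathcal{A}_1(k,h)|\,(n-k-1)_h = n^{k-1}
\]
for every integer $n\ge k+1$. This is precisely equation \eqref{Ahk}, and it follows from the preceding forest-counting Theorem together with Lemma \ref{counting Gbijh}: classifying each counted forest by its number $h$ of rooted trees, the vertices from $\{1,\dots,k+1\}$ together with the underlying shape are recorded by an element of $\mathcal{A}_1(k,h)$, while the $h$ unlabeled roots are labeled by an injection into the $n-k-1$ available labels $\{k+2,\dots,n\}$, contributing the factor $(n-k-1)_h$; the trees are pairwise distinguishable because each contains a labeled vertex, so no overcounting occurs. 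Summing over $h$ and using that the total number of such forests is $n^{k-1}$ gives the displayed identity.

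Next I would upgrade this to a polynomial identity in an indeterminate $x$. Both sides are polynomials in $n$ of degree at most $k-1$ (the left because $(n-k-1)_h$ has degree $h\le k-1$), and they agree at the infinitely many integers $n\ge k+1$, hence agree identically:
\[
\sum_{h=0}^{k-1}|\mathcal{A}_1(k,h)|\,(x-k-1)_h = x^{k-1}.
\]
On the other hand, Lemma \ref{Amj} supplies the expansion $x^{k-1}=\sum_{h=0}^{k-1}B(k,h)(x-k-1)_h$ in the very same basis. The polynomials $(x-k-1)_0,(x-k-1)_1,\dots,(x-k-1)_{k-1}$ have distinct degrees $0,1,\dots,k-1$ and are therefore linearly independent, so the expansion of $x^{k-1}$ in this basis is unique. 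Comparing the two expansions coefficient by coefficient yields $|\mathcal{A}_1(k,h)|=B(k,h)$ for $0\le h\le k-1$, which is the theorem.

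The main obstacle is not in this final deduction, which is essentially linear algebra once both expansions are in hand, but in the forest-counting input: the identity \eqref{Ahk}, equivalently the fact that the relevant family of forests has cardinality $n^{k-1}$, is where the genuine combinatorial work lies, and I would treat it as established by the preceding Theorem. The only points requiring care in the present argument are (i) justifying the passage from an identity valid at all integers $n\ge k+1$ to a polynomial identity in $x$, for which the degree bound together with infinitely many coincidences suffices, and (ii) confirming that the two summation ranges coincide, which the bound $h\le k-1$ guarantees. A more self-contained alternative would be to prove directly that $|\mathcal{A}_1(k,h)|$ satisfies the recurrence of Lemma \ref{Amj} by deleting the largest-labeled vertex $k+1$ and matching the three terms; but correctly extracting the coefficients $1$, $1+2h+k$, and $(1+h)(1+h+k)$ there, especially through the doubly-rooted special tree and the edge rearrangements forced by an internal deletion, is delicate, so the basis-comparison route is preferable.
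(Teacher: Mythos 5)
Your argument is circular as it stands. The crux of your proposal is the identity $\sum_{h=0}^{k-1}|\mathcal{A}_1(k,h)|\,(n-k-1)_h = n^{k-1}$, which you propose to treat ``as established by the preceding forest-counting Theorem.'' But in the paper that theorem is not an independent input: it is stated without proof, as the result to be obtained, and its proof consists precisely of Lemma \ref{Amj} together with the very theorem you are asked to prove. The paper's logical order is: (i) Lemma \ref{Amj} shows, purely algebraically, that the numbers $B(k,h)$ defined by the stated recurrence satisfy $x^{k-1}=\sum_h B(k,h)(x-k-1)_h$; (ii) the present theorem shows $|\mathcal{A}_1(k,h)|=B(k,h)$ by combinatorial means that make no use of \eqref{Ahk}; (iii) only then do \eqref{Ahk} and the $n^{k-1}$ forest count follow. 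By inverting (ii) and (iii) you assume the statement whose establishment is the entire point. Nor can the count be imported from outside the paper: it is not an off-the-shelf instance of Cayley-type formulas, because of the special non-rooted tree required to contain both $1$ and $2$, the bipartition of labels between root and non-root vertices, and the requirement that every tree have at least two vertices.

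The correct route is the one you set aside in your closing sentences as ``delicate'': prove directly that the counts satisfy
\[
|\mathcal{A}_1(k,h)| = |\mathcal{A}_1(k-1,h-1)| + (1+2h+k)\,|\mathcal{A}_1(k-1,h)| + (1+h)(1+h+k)\,|\mathcal{A}_1(k-1,h+1)|,
\]
which is the recurrence defining $B(k,h)$, by classifying the graphs in $\mathcal{A}_1(k,h)$ according to the role of the vertex $k+1$: an isolated edge to an unlabeled root, a leaf, a neighbor of an unlabeled leaf, and the three cases inside the component containing $1$ and $2$. This is exactly the paper's six-step construction, and it is where the actual work lies. Your remaining steps --- uniqueness of the expansion in the falling-factorial basis, and passing from agreement at infinitely many integers to a polynomial identity --- are logically sound, but they carry none of the content; indeed, once the recurrence is in hand they are unnecessary, since $B(k,h)$ is defined by that recurrence and the same initial conditions, so induction gives $|\mathcal{A}_1(k,h)|=B(k,h)$ outright.
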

 \begin{proof}
We construct $\mathcal{A}_1(k,h)$ from the three sets  $\mathcal{A}_1(k-1,h-1), \mathcal{A}_1(k,h)$, and $\mathcal{A}_1(k-1,h+1)$ in the following six steps.
For a graph $G$, we let $(u,v)$ denote an undirected edge in $G$ between the vertices $u$ and $v$. 

\vspace{1cm}
1. For $G \in \mathcal{A}_1(k-1,h-1)$, we adjoin to $G$ the component that consists of the vertex $(k+1)$ with one edge to an unlabeled vertex. This contributes 
\[
|\mathcal{A}_1(k-1,h-1)|
\]
graphs to $\mathcal{A}_1(k,h)$. 

\vspace{1cm}
2. For $G \in \mathcal{A}_1(k-1,h)$, for each vertex $v$ in $G$ whether labeled or unlabeled, we create a graph $G'$ by adjoining one edge
\[
(k+1, v).
\]
This contributes 
\[
(k+h)|\mathcal{A}_1(k-1,h)|
\]
graphs to $\mathcal{A}_1(k,h)$, as each $G \in \mathcal{A}_1(k-1,h)$ has $k+h$ vertices.

\vspace{1cm}
3. For $G \in \mathcal{A}_1(k-1,h)$, for each unlabeled vertex $v$ in $G$, we create a graph $G'$ by labeling $v$ as $(k+1)$ and then adjoining the edge 
\[
(k+1,v).
\]
This contributes 
\[
h|\mathcal{A}_1(k-1,h)|
\]
graphs to $\mathcal{A}_1(k,h)$, as each $G \in \mathcal{A}_1(k-1,h)$ has $h$ unlabeled vertices.

\vspace{1cm}
4. For $G \in \mathcal{A}_1(k-1,h)$, we take the component $C_0$ that contains the vertices 1 and 2 and let $e$ denote the edge 

\[
e=(2,v)
\]
such that in $G\backslash e$ the vertices 1 and 2 are in separate components. Then create the graph $G'$ by adjoining the edges 
\[
(k+1,2) \text{ and } (k+1,v).
\]
This contributes
\[
|\mathcal{A}_1(k-1,h)|
\]
graphs to $\mathcal{A}_1(k,h)$. 

Thus in total the graphs in $\mathcal{A}_1(k-1,h)$ contribute 
\[
(1+2h+k)|\mathcal{A}_1(k-1,h)|
\]
graphs to $\mathcal{A}_1(k,h)$.

\vspace{1cm}
5. For $G \in \mathcal{A}_1(k-1,h+1)$, let the components be
\[
C_0, C_1, ..., C_{h+2}
\]
where $C_0$ is the component containing the vertices 1 and 2. Take the unlabeled vertex $v_i$ in the component $C_i$ (so $i \neq 0$).  Label $v$ as $(k+1)$, and then for each vertex $v'$ not in $C_i$, we create the graph $G'$ by adjoining the edge 
\[
(v,v').
\]
This contributes
\[
k+h+1 - |V(C_i)|
\]
graphs to $\mathcal{A}_1(k,h)$. Doing this for each unlabeled $v_i$ in $G$ contributes 
\begin{align} \nonumber
(k+h+1)(h+1) - \sum_{i=1}^{h+1} |V(C_i)| &= (k+h+1)(h+1) -(k+h+1) +|V(C_0)|  \\ \label{vc0}
&= (k+h+1)h+  |V(C_0)|
\end{align}
graphs to $\mathcal{A}_1(k,h)$.

\vspace{1cm}
6. For the same $G$ as in step 5, we take the component $C_0$ and let $e$ denote the edge 
\[
e=(2,v)
\]
such that in $G\backslash e$ the vertices 1 and 2 are in separate components $U_1$ and $U_2$, respectively. Then, in $G$, for each $v'$ in a component $C_i$, $i \neq 0$, we label the unlabeled vertex in $C_i$ as $(k+1)$, remove the edge $e$, and adjoin the edges 
\[
(v,k+1) \text{ and } (2,v'). 
\]
This contributes 
\begin{equation}\label{vci}
\sum_{i=1}^{h+1} |V(C_i)|
\end{equation}
graphs to $\mathcal{A}_1(k,h)$.
Adding \eqref{vc0} and \eqref{vci} yields
\[
 (k+h+1)h+  |V(C_0)|+ \sum_{i=1}^{h+1} |V(C_i)| =  (k+h+1)(h+1)
\] 
graphs in $\mathcal{A}_1(k,h)$ that come from one $G$ in $\mathcal{A}_1(k-1,h+1)$. Doing this for each $G \in \mathcal{A}_1(k-1,h+1)$ contributes 
\[
 (k+h+1)(h+1)|\mathcal{A}_1(k-1,h+1)|
\]
graphs in $\mathcal{A}_1(k,h)$.

This accounts for every graph in  $\mathcal{A}_1(k,h)$: take the vertex $ v=k+1$ in $\mathcal{A}_1(k,h)$ and exactly one of the following is true: 

1. A component consists solely of $v$ and an unlabeled vertex. 

2. $v$ is a leaf in a component that contains at least three vertices. 

3. $v$ is adjacent to an unlabeled vertex which is a leaf in a component that contains at least three vertices. 

4. $v$ is in component $C_0$, and $v$ is adjacent to exactly two vertices, one of which is 2, and $G\backslash v$ separates vertices 1 and 2. 

5. $v$ is in component $C_0$, and $v$ is not a leaf, and $G\backslash v$ does not separate vertices 1 and 2. 

6. $v$ is in component $C_0$, $G\backslash v$ separates vertices 1 and 2, and either $v$ is adjacent to more than two vertices, or $v$ is adjacent to exactly two vertices, neither of which is 2.

This completes the proof.
\end{proof}

We now consider the sum from \eqref{D(b) sum}
\[
\sum_{b \in C(k+1,n)} \sum_{i=1}^{k+1} d(\hat{b}_i)^2.
\]
This is equal to 
\[
(n-k)\sum_{b \in C(k,n)} d(b)^2.
\]
And 
\begin{align*}
\sum_{ (\beta,s_1,s_2): |I(\beta,s_1,s_2)| = 0} \mathrm{sgn}(s_1,s_2)d(s_1(\beta))d(s_2(\beta)) = \sum_{ (\beta,s)} d(s(\beta))^2
\end{align*}

We thus show that 
\begin{align} \label{I=0}
\sum_{ (\beta,s)} d(s(\beta))^2 = n^{k-1}(n-k)\sum_{b \in C(k,n)}d(b)^2.
\end{align}
 
 \begin{definition}
 Let $b \in C(k,n)$.
 Define $\mathcal{G}(b,i,j)$ to be the set of all $(\beta, s) \in B_0^*(k,n)$ such that 
\[
s(\beta) \text{ as a set is equal to }  b.
\] 

Define the subset $\mathcal{G}(b;h)\subset \mathcal{G}(b)$ to consist of all $(\beta, s)$
such that 
\[
|\beta|= k+1+h.
\]
\end{definition}

We characterize the possible $(\beta,s)$that can appear in $\mathcal{G}(b;h)$. We identify $\beta$ with the graph $G(\beta)$ in the following lemma. 

\begin{lemma} \label{Gb}
Let $b \in C(k,n)$. 
Then $\mathcal{G}(b;h)$ is the set of all $(\beta,s_)$ such that for the graphs $G(\beta)$: 

1.  $b_1, ..., b_k$ are vertices of $G(\beta)$, and $|V(G(\beta))|=h+k+1$.

2. There are exactly $k$ edges. 

3. There are exactly $h+1$ components. There is exactly one non-$b$ vertex in each component.
Here, a ``non-$b$" vertex means a number not in $b$.  

4. Each component has at least two vertices. 

5. There are no cycles, loops, or multiple edges.

6. $s$ makes each component into a directed rooted tree such that the unlabeled vertex is the root.
\end{lemma}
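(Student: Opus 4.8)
The plan is to mirror the proof of Lemma \ref{Gij}, specialized to the case $s_1 = s_2 = s$ (so that $|I(\beta,s_1,s_2)| = 0$), which is strictly simpler since only a single choice function $s$ is involved. The set $\mathcal{G}(b;h)$ consists of pairs $(\beta, s)$ with $s(\beta) = b$ as a set, $d(s(\beta)) \neq 0$, and $|\beta| = k+1+h$, so I must show these conditions are equivalent to the six graph-theoretic properties. Throughout I identify $\beta$ with the undirected graph $G(\beta)$ on the vertex set of numbers appearing in the pairs, and I note that Lemma \ref{no cycles} already licenses the assumptions that $\beta$ has no repeated pairs and that $G(\beta)$ has no cycles.

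First I would dispatch the easy bookkeeping. Statement 1 is immediate: each $b_i$ equals $s(\beta(m))$ for some $m$ and is therefore an endpoint of an edge, so $b_1, \dots, b_k$ are all vertices, while $|V(G(\beta))| = |\beta| = k+1+h$ by the definition of $h$. Statement 2 holds because $\beta \in \mathrm{Pairs}(k,n)$ contributes exactly $k$ pairs, hence $k$ edges. Statement 5 is inherited directly: no cycles from Lemma \ref{no cycles}, and no loops or multiple edges by construction, since each $\beta(i)$ has $\beta(i,1) < \beta(i,2)$ and repeated pairs are excluded. Statement 4 follows because every vertex of $G(\beta)$ is by definition an endpoint of some pair, so there are no isolated vertices and every component carries at least one edge, hence at least two vertices.

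The substantive content lies in Statements 3 and 6, and here is where I would concentrate the argument. Since $d(s(\beta)) \neq 0$ forces the entries of the sequence $s(\beta(1)), \dots, s(\beta(k))$ to be pairwise distinct, no vertex of $G(\beta)$ is the $s$-selected endpoint of more than one edge; equivalently $G(\beta,s)$ has no vertex with more than one outgoing edge, exactly as observed in the proof of Lemma \ref{no cycles}. Now $G(\beta)$ is a simple acyclic graph, hence a forest with $V = k+1+h$ vertices and $E = k$ edges, so the forest identity gives exactly $C = V - E = h+1$ components, which is the component count in Statement 3. Within a single tree component on $m$ vertices there are $m-1$ edges, each selecting a distinct source, so exactly one vertex of the component is never selected, and $s$ orients the component as a rooted tree with that unselected vertex as its root. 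Because the selected vertices are globally exactly the elements of $b$, the unselected root of each component is a non-$b$ vertex; with $h+1$ roots and only $h+1 = (k+1+h) - k$ non-$b$ vertices available, each component must contain exactly one non-$b$ vertex, namely its root. This simultaneously yields the non-$b$-vertex distribution in Statement 3 and the rooting described in Statement 6.

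The main obstacle I anticipate is precisely this passage from the algebraic nonvanishing condition $d(s(\beta)) \neq 0$ to the clean combinatorial assertion that $s$ roots each tree at its unique non-$b$ vertex; the counting that matches the number of roots to the number of non-$b$ vertices must be carried out carefully so as to conclude ``exactly one per component'' rather than merely ``at most one.'' Everything else is routine and parallels the argument of Lemma \ref{Gij} verbatim, with the single function $s$ playing the role of both $s_1$ and $s_2$.
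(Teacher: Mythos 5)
Your proposal is correct and follows essentially the same route as the paper, whose own proof of this lemma is simply the observation that the reasoning of Lemma \ref{Gij} applies verbatim with $s_1 = s_2 = s$. Your write-up in fact fills in details the paper leaves implicit (the forest identity $C = V - E = h+1$ and the count matching the $h+1$ unselected roots to the $h+1$ non-$b$ vertices), but it is the same argument.
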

\begin{proof}
This follows from the same reasoning in Lemma \ref{Gij}.
\end{proof}

\begin{definition}
Let $b = \{1,2,...,k\}$ and take a graph $G$ in 
\[
\mathcal{G}(b;h).
\] 
Now make a graph $G'$ by taking every non-$b$ vertex in $G$ an unlabeled vertex. The set of such $G'$ depends only on $k$ and $h$; let $\mathcal{A}_0(k,h)$ denote this set.
\end{definition}

\begin{lemma}\label{counting Gb}
For $0 \leq h \leq k-1$ and any $b \in C(k, n)$,
\[
|\mathcal{G}(b;h)| = |\mathcal{A}_0(k,h)| (n-k)_{h+1}.
\]
\end{lemma}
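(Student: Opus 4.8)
The plan is to follow the same strategy as in Lemma \ref{counting Gbijh}: reduce the count of pairs $(\beta,s)$ to a count of undirected forests, and then count the ways to label the non-$b$ vertices with distinct elements drawn from $\{1,\dots,n\}\setminus b$.

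First I would observe that the orientation $s$ is forced. By Lemma \ref{Gb}, every $(\beta,s)\in\mathcal{G}(b;h)$ has $\beta$ with no repeated pairs (condition 5), so $\beta$ is determined by the undirected graph $G(\beta)$; and $s$ is required to direct each component as a rooted tree whose root is the unique non-$b$ vertex. Since a rooted tree admits a unique orientation with every edge pointing toward the root (equivalently, $s$ must pick the non-root endpoint of each edge, which is forced once the root is fixed), there is exactly one admissible $s$ for each admissible undirected forest $G(\beta)$. Hence $|\mathcal{G}(b;h)|$ equals the number of undirected forests on the prescribed vertex set that satisfy conditions 1--5 of Lemma \ref{Gb}.

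Then I would set up the correspondence with $\mathcal{A}_0(k,h)$. After renaming $b$ to $\{1,\dots,k\}$, an element of $\mathcal{G}(b;h)$ is such a forest whose $k$ labeled vertices are $b_1,\dots,b_k$ and whose $h+1$ non-$b$ vertices are distinct elements of $\{1,\dots,n\}\setminus b$. Erasing the labels on the non-$b$ vertices produces precisely an element of $\mathcal{A}_0(k,h)$, and conversely an element of $\mathcal{A}_0(k,h)$ together with an assignment of distinct labels in $\{1,\dots,n\}\setminus b$ to its $h+1$ unlabeled vertices recovers an element of $\mathcal{G}(b;h)$. Thus the count reduces to counting these labelings for a fixed shape. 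Each component has at least two vertices (condition 4) and exactly one non-$b$ vertex (condition 3), so every component contains at least one $b$-vertex; since the components partition the $b$-vertices into disjoint nonempty blocks, the components are pairwise distinguishable, say by the smallest $b$-label each contains. Consequently the $h+1$ unlabeled vertices, one per component, are distinguishable, and a labeling is an injection from these $h+1$ ordered slots into the $(n-k)$-element set $\{1,\dots,n\}\setminus b$. The number of such injections is $(n-k)(n-k-1)\cdots(n-k-h)=(n-k)_{h+1}$, which yields $|\mathcal{G}(b;h)|=|\mathcal{A}_0(k,h)|\,(n-k)_{h+1}$.

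The only step requiring care is the distinguishability of the unlabeled vertices, which is exactly what makes the count an ordered falling factorial $(n-k)_{h+1}$ rather than an unordered expression carrying a symmetry factor; this is guaranteed by conditions 3 and 4 of Lemma \ref{Gb}. The remaining steps are bookkeeping parallel to the proof of Lemma \ref{counting Gbijh}, with $k+1$ replaced by $k$ and $h$ replaced by $h+1$ in the labeling count because here every one of the $h+1$ components carries a non-$b$ vertex.
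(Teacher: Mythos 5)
Your proposal is correct and follows the same route as the paper's proof: rename $b$ to $\{1,\dots,k\}$, identify each element of $\mathcal{G}(b;h)$ with an element of $\mathcal{A}_0(k,h)$ by erasing the non-$b$ labels, and count the labelings as $(n-k)_{h+1}$ injections, using the fact that each component contains a $b$-vertex to distinguish the $h+1$ slots. Your added remarks—that the orientation $s$ is forced once the roots are fixed, and that distinguishability of components is what justifies the ordered falling-factorial count—are details the paper leaves implicit, but the argument is the same.
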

\begin{proof} 
We can re-name the elements of $b$ to correspond to $\{1,2,...,k \}$. Then any graph in $\mathcal{G}(b;h)$ corresponds to taking a graph in $\mathcal{A}_0(k,h)$ and labeling the unlabeled vertices with numbers chosen from the set $\{ 1,2,...,n\} - b$. There are 
\[
(n-k)(n-k-1)...(n-k-h)
\]
ways to do this because each unlabeled vertex appears in a component with a vertex in $b$ and we may order these components. 
\end{proof}

\begin{lemma}

\begin{align*}
&\sum_{ \beta \in \mathrm{Pairs}(k,n), |I(\beta,s_1,s_2)| = 0} d(s(\beta))^2 \\
&=\left(\sum_{h=0}^{k-1} |\mathcal{A}_0(k,h)|(n-k)_{h+1}\right) \sum_{b \in C(k,n)}  d(b)^2
\end{align*}
\end{lemma}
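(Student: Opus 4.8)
The plan is to mirror the structure of the preceding lemma (the $|I(\beta,s_1,s_2)|=1$ case), with the important simplification that restricting to $|I(\beta,s_1,s_2)|=0$ forces $s_1=s_2$; this turns every summand into a perfect square and eliminates all of the sign bookkeeping that dominated the previous proof. First I would observe that $|I(\beta,s_1,s_2)|=0$ means $s_1(\beta(i))=s_2(\beta(i))$ for every $i$, i.e. $s_1=s_2=:s$. Hence $\mathrm{sgn}(s_1,s_2)=1$ and the summand equals $d(s(\beta))^2$, so the left-hand side is exactly $\sum_{(\beta,s)} d(s(\beta))^2$, where the sum ranges over all pairs $(\beta,s)$ with $d(s(\beta))\neq 0$ (pairs with $d(s(\beta))=0$ contribute nothing and may be discarded).

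Next I would show that each such pair lies in a unique $\mathcal{G}(b)$ with $b\in C(k,n)$. Since $s$ selects one endpoint from each of the $k$ pairs of $\beta$, and the requirement $d(s(\beta))\neq 0$ forces these $k$ selected indices to be distinct, the image $s(\beta)$ is a $k$-element subset of $\{1,\dots,n\}$; this subset is precisely the $b$ for which $(\beta,s)\in\mathcal{G}(b)$. Partitioning $\mathcal{G}(b)$ according to $h=|\beta|-k-1$ gives $\mathcal{G}(b)=\bigcup_{h=0}^{k-1}\mathcal{G}(b;h)$, and Lemma \ref{counting Gb} supplies the count $|\mathcal{G}(b;h)| = |\mathcal{A}_0(k,h)|(n-k)_{h+1}$. (The characterization in Lemma \ref{Gb} guarantees that every nonzero term is captured by exactly one pair $(b,h)$ and that the relevant graphs are the forests counted by $\mathcal{A}_0(k,h)$.)

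Finally I would evaluate the contribution of each $(\beta,s)\in\mathcal{G}(b;h)$. Because $s(\beta)$ equals $b$ as a set, the tuple $s(\beta)$ is a permutation of $b$, so $d(s(\beta))=\pm d(b)$ and therefore $d(s(\beta))^2=d(b)^2$ independently of which permutation arises — this is the step that makes the argument essentially automatic. Each term in $\mathcal{G}(b;h)$ thus contributes exactly $d(b)^2$, and summing over the $|\mathcal{A}_0(k,h)|(n-k)_{h+1}$ pairs in $\mathcal{G}(b;h)$, then over $h$ and over $b$, yields
\[
\sum_{(\beta,s)} d(s(\beta))^2 = \sum_{b\in C(k,n)}\left(\sum_{h=0}^{k-1}|\mathcal{A}_0(k,h)|(n-k)_{h+1}\right)d(b)^2,
\]
which is the asserted identity. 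I do not expect a genuine obstacle here: unlike the $|I|=1$ lemma, where one had to match $\mathrm{sgn}(s_1,s_2)\,d(s_1(\beta))\,d(s_2(\beta))$ against $(-1)^{i+j}d(\hat{b}_i)d(\hat{b}_j)$ along a path between $b_i$ and $b_j$, here the square $d(s(\beta))^2$ is automatically nonnegative and carries no sign to track. The only point meriting attention is confirming the clean bijective bookkeeping of the previous paragraph, which is precisely what Lemmas \ref{Gb} and \ref{counting Gb} were set up to provide.
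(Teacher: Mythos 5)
Your proposal is correct and follows essentially the same route as the paper: both arguments note that $|I(\beta,s_1,s_2)|=0$ forces $s_1=s_2=s$, assign each pair $(\beta,s)$ to the unique $\mathcal{G}(b)$ determined by the set $s(\beta)$, use $d(s(\beta))^2=d(b)^2$, and invoke Lemma \ref{counting Gb} to count $|\mathcal{G}(b;h)|=|\mathcal{A}_0(k,h)|(n-k)_{h+1}$. Your write-up merely makes explicit two points the paper leaves implicit (the collapse $s_1=s_2$ with $\mathrm{sgn}(s_1,s_2)=1$, and the permutation-invariance of $d(s(\beta))^2$), which is fine.
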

\begin{proof}
Each $(\beta, s)$ is in a unique $\mathcal{G}(b)$: the set $s_1(\beta)$ is $b$. Thus 
\begin{align*}
 &\sum_{ \beta \in \mathrm{Pairs}(k,n), |I(\beta,s_1,s_2)| = 0} d(s(\beta))^2\\
=& \sum_{b \in C(k,n)} |\mathcal{G}(b)| d(b)^2  \\ 
=&  \left(\sum_{h=0}^{k-1} |\mathcal{A}_0(k,h)|(n-k)_{h+1}\right) \sum_{b \in C(k,n)}  d(b)^2
\end{align*}
by Lemma \ref{counting Gb}. 
This completes the proof.
\end{proof}

We thus show that 
\[ 
\sum_{h=0}^{k-1} |\mathcal{A}_0(k,h)|(n-k)_{h+1}  = n^{k-1}(n-k).
\]
The result is 
\begin{theorem}
The number of rooted forests such that there are exactly $k$ non-root vertices; the non-root vertices chosen from the set $\{1,2,...,k \}$; the root vertices are chosen from the set $\{k+1, k+2, ..., n \}$; and every tree has at least two vertices is 
\[
n^{k-1}(n-k).
\] 
\end{theorem}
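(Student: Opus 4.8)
The plan is to prove the count directly by encoding each admissible forest as a parent function on the internal vertices and then factoring the enumeration into a choice of forest shape on $\{1,\dots,k\}$ and a choice of external attachment. Call the vertices in $\{1,\dots,k\}$ \emph{internal} and those in $\{k+1,\dots,n\}$ \emph{external}; in any admissible forest the internal vertices are exactly the non-root vertices, the roots form a subset of the external vertices, and (by the ``at least two vertices'' hypothesis) no root is isolated. First I would set up the encoding: given an admissible forest $F$, assign to each internal vertex $i$ its parent $p(i)\in\{1,\dots,n\}\setminus\{i\}$, which is well defined since every internal vertex has a parent. The assignment $F\mapsto p$ is a bijection onto the functions $p\colon\{1,\dots,k\}\to\{1,\dots,n\}$ with $p(i)\neq i$ whose functional digraph has no cycle contained in $\{1,\dots,k\}$ (equivalently, iterating $p$ from any internal vertex eventually reaches an external vertex). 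The used roots are recovered as the external values taken by $p$, and no used root is isolated because each such value is $p(i)$ for some child $i$.

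Next I would decompose such a $p$. Let $\phi$ record, for each internal vertex whose parent is again internal, that internal parent pointer; the internal vertices whose parent is external are then precisely the roots of $\phi$. Acyclicity of $p$ on $\{1,\dots,k\}$ is exactly the statement that $\phi$ is a rooted forest on the $k$ internal vertices, and $p$ is recovered from $\phi$ together with a choice, for each of the $m$ roots of $\phi$, of an external target in $\{k+1,\dots,n\}$. These $m$ choices are independent and may repeat (a repeated target simply produces one external root carrying several subtrees). Writing $f(k,m)$ for the number of rooted forests on $k$ labeled vertices having exactly $m$ trees, the total is therefore $\sum_{m=1}^{k} f(k,m)\,(n-k)^{m}$.

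Finally I would evaluate this sum using the standard forest identity $f(k,m)=\binom{k-1}{m-1}k^{\,k-m}$. Substituting $j=m-1$ and applying the binomial theorem,
\begin{align*}
\sum_{m=1}^{k}\binom{k-1}{m-1}k^{\,k-m}(n-k)^{m}
&=(n-k)\sum_{j=0}^{k-1}\binom{k-1}{j}k^{\,(k-1)-j}(n-k)^{j}\\
&=(n-k)\bigl(k+(n-k)\bigr)^{k-1}=n^{k-1}(n-k).
\end{align*}
The only genuinely non-formal ingredient is the enumeration $f(k,m)=\binom{k-1}{m-1}k^{\,k-m}$, which I would either cite or derive by a Prüfer-type bijection; this is the main obstacle, while the rest is bookkeeping of the encoding above. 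Note that this decomposition organizes the count by the number $m$ of trees of the internal forest, which is why a power $(n-k)^{m}$ rather than a falling factorial appears: distinct internal trees may share an external root.

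As a cross-check one can also reconcile with the earlier $\mathcal{A}_1$ results. Since $(n-k)_{h+1}=(n-k)(n-k-1)_h$, the reduced identity $\sum_{h}|\mathcal{A}_0(k,h)|(n-k)_{h+1}=n^{k-1}(n-k)$ is equivalent to $\sum_{h}|\mathcal{A}_0(k,h)|(n-k-1)_h=n^{k-1}$, and comparing with Lemma \ref{Amj} together with the already-established equality $|\mathcal{A}_1(k,h)|=B(k,h)$, this would follow from $|\mathcal{A}_0(k,h)|=|\mathcal{A}_1(k,h)|$ for every $h$. That alternative route requires a bijection $\mathcal{A}_0(k,h)\to\mathcal{A}_1(k,h)$ trading the extra unlabeled root of an $\mathcal{A}_0$-forest for the second distinguished label of the doubly-rooted component of an $\mathcal{A}_1$-forest; I expect describing and verifying this bijection to be the delicate step, which is precisely why the direct binomial computation above is the route I would carry out.
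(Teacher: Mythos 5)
Your proposal is correct, but it proves the theorem by a genuinely different route than the paper. The paper stratifies the admissible forests by their number of components: a forest with $h+1$ trees is obtained from a shape in $\mathcal{A}_0(k,h)$ (roots unlabeled) by labeling the roots with \emph{distinct} external vertices, giving the falling-factorial count $\sum_{h}|\mathcal{A}_0(k,h)|(n-k)_{h+1}$; it then proves $|\mathcal{A}_0(k,h)|=B(k,h)$ by an explicit case-analysis construction building $\mathcal{A}_0(k,h)$ out of $\mathcal{A}_0(k-1,h-1)$, $\mathcal{A}_0(k-1,h)$, $\mathcal{A}_0(k-1,h+1)$, and finally invokes Lemma \ref{Amj}, which expands $x^{k-1}$ in the basis $(x-k-1)_h$. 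You instead stratify by the \emph{internal} forest: deleting the external roots leaves a rooted forest $\phi$ on $\{1,\dots,k\}$ with some number $m$ of trees, and the forest is recovered from $\phi$ plus an arbitrary (repetition allowed) assignment of external parents to the $m$ roots of $\phi$ --- which is exactly why you get powers $(n-k)^m$ where the paper gets falling factorials, since several internal trees may hang from one external root. Your bijection is sound (acyclicity of the parent map on $\{1,\dots,k\}$ is precisely the forest condition, and ``image of the assignment'' recovers the used roots, none isolated), and the evaluation $\sum_m \binom{k-1}{m-1}k^{k-m}(n-k)^m=(n-k)n^{k-1}$ by the binomial theorem is correct. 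What your route buys is brevity and transparency: no recursion, no falling-factorial basis, no case analysis. What it costs is the classical enumeration $f(k,m)=\binom{k-1}{m-1}k^{k-m}$ of rooted forests with a prescribed number of trees, which you must cite or prove separately (e.g.\ by a Pr\"ufer-type argument); the paper's machinery, by contrast, is self-contained and is in any case needed again for the parallel count $|\mathcal{A}_1(k,h)|=B(k,h)$ arising from the $|I(\beta,s_1,s_2)|=1$ terms, so within the paper it is not wasted effort. Your proposed cross-check via a bijection $\mathcal{A}_0(k,h)\to\mathcal{A}_1(k,h)$ is not what the paper does --- it proves the two equalities to $B(k,h)$ independently by parallel constructions --- and you are right to flag that bijection as the delicate step; it is not needed for your main argument, which stands on its own.
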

Now
\begin{align}\label{Ahk}
\sum_{h=0}^{k-1} |\mathcal{A}_0(k,h)|(n-k)_{h+1} = (n-k)\sum_{h=0}^{k-1} |\mathcal{A}_0(k,h)|(n-k-1)_h
\end{align}
We thus show that 
\[
\sum_{h=0}^{k-1} |\mathcal{A}_0(k,h)|(n-k-1)_h = n^{k-1}.
\]
That is, we prove
\[
|\mathcal{A}_0(k,h)| = B(k,h)
\]

\begin{theorem}For $0 \leq h \leq k-1$,
\[
A_0(k,h) = B(k,h).
\]  
 \end{theorem}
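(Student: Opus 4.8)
The plan is to deduce $|\mathcal{A}_0(k,h)| = B(k,h)$ from the already-established identity $|\mathcal{A}_1(k,h)| = B(k,h)$ by exhibiting an explicit bijection $\Phi\colon \mathcal{A}_0(k,h)\to\mathcal{A}_1(k,h)$. The two families are described structurally by Lemma \ref{Gb} and Lemma \ref{Gij}: an element of $\mathcal{A}_0(k,h)$ is a forest of $h+1$ rooted trees whose roots are the unlabeled vertices and whose non-root vertices carry the labels $\{1,\dots,k\}$, each tree having at least two vertices; an element of $\mathcal{A}_1(k,h)$ is a forest of $h+1$ trees in which one distinguished, fully labeled component $C_0$ contains the labels $1$ and $2$, the remaining $h$ components are rooted at their single unlabeled vertex, and the labels used are $\{1,\dots,k+1\}$. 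In both families there are $k$ edges and $h+k+1$ vertices, so the only discrepancy is that $\mathcal{A}_0$ has one more unlabeled (root) vertex and one fewer label than $\mathcal{A}_1$, which is exactly what a single relabeling move will fix.

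I would define $\Phi$ as follows. Given $G\in\mathcal{A}_0(k,h)$, first relabel every labeled vertex by $i\mapsto i+1$, so that the labels become $\{2,\dots,k+1\}$ and the label $1$ is free. Let $T$ be the unique tree of $G$ containing the (shifted) vertex $2$; relabel the unlabeled root of $T$ by $1$, and declare $T$ to be the distinguished component $C_0$. I would then check that $\Phi(G)\in\mathcal{A}_1(k,h)$: the number of components $(h+1)$ and of edges $(k)$ is unchanged, the labels are now exactly $\{1,\dots,k+1\}$, the component $C_0$ is fully labeled (its former unlabeled root now carries $1$) and contains both $1$ and $2$ while still having at least two vertices, and each of the remaining $h$ components retains exactly one unlabeled root. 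The inverse $\Psi$ reverses this: given $H\in\mathcal{A}_1(k,h)$, take the component $C_0$ (the unique component containing the label $1$), turn vertex $1$ back into an unlabeled vertex and root $C_0$ there, and then relabel by $i\mapsto i-1$.

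The one thing that must be verified carefully is that $\Phi$ and $\Psi$ are well defined and mutually inverse. The key points are that vertex $2$ (after the shift) lies in a unique tree, so $C_0$ is unambiguously determined in the forward map, and that in any $H\in\mathcal{A}_1(k,h)$ the label $1$ sits in exactly one component, namely $C_0$, so the backward map correctly identifies which component to unroot and which vertex becomes the new root. Since $\Phi$ and $\Psi$ each only move a single vertex between the ``labeled'' and ``unlabeled'' roles on the distinguished component and otherwise act by the mutually inverse shifts $i\mapsto i\pm1$, composing them in either order returns the original forest. Granting this, $|\mathcal{A}_0(k,h)| = |\mathcal{A}_1(k,h)|$, and the preceding theorem gives $|\mathcal{A}_1(k,h)| = B(k,h)$, which is the assertion.

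I expect the mutual-inverse bookkeeping to be the only real obstacle, and it is genuinely routine once the roles of the two distinguished vertices $1$ and $2$ on $C_0$ are kept straight. As an alternative that parallels the proof of $|\mathcal{A}_1(k,h)| = B(k,h)$, one could instead show directly that $|\mathcal{A}_0(k,h)|$ satisfies the recurrence of Lemma \ref{Amj} by decomposing $\mathcal{A}_0(k,h)$ according to the role of the highest label $v=k$ (leaf attached to a fresh root; leaf or near-leaf in a larger tree; root-splitting moves analogous to Steps $5$--$6$ of the $\mathcal{A}_1$ argument), together with the base case $|\mathcal{A}_0(1,0)| = 1 = B(1,0)$; but the bijection is shorter and makes the equality $|\mathcal{A}_0(k,h)| = |\mathcal{A}_1(k,h)|$ transparent.
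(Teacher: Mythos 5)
Your proof is correct, but it takes a genuinely different route from the paper's. The paper handles $\mathcal{A}_0$ exactly as it handled $\mathcal{A}_1$: it shows that the numbers $|\mathcal{A}_0(k,h)|$ satisfy the recurrence of Lemma \ref{Amj}, namely $B(k,h)=B(k-1,h-1)+(1+2h+k)B(k-1,h)+(1+h)(1+h+k)B(k-1,h+1)$, by constructing each graph of $\mathcal{A}_0(k,h)$ from one of $\mathcal{A}_0(k-1,h-1)$, $\mathcal{A}_0(k-1,h)$, $\mathcal{A}_0(k-1,h+1)$ according to the role of the vertex labeled $k$ (adjoin a fresh two-vertex component; attach $k$ as a leaf to an existing vertex; relabel an unlabeled vertex as $k$ and hang a fresh unlabeled vertex off it; relabel an unlabeled root as $k$ and join it to a vertex of a different component), followed by a check that these cases are exhaustive and disjoint. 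You instead transport the previously proved theorem $|\mathcal{A}_1(k,h)|=B(k,h)$ across an explicit bijection: shift all labels up by one and promote the unlabeled root of the tree containing the (shifted) label $2$ to the label $1$, so that this tree becomes the fully labeled component $C_0$; the inverse demotes vertex $1$ back to an unlabeled root and shifts down. This is non-circular, since the $\mathcal{A}_1$ theorem precedes this one in the paper, and your well-definedness and mutual-inverse checks are exactly the right ones: by the structure in Lemmas \ref{Gb} and \ref{Gij}, every non-distinguished component has precisely one unlabeled vertex and $C_0$ is the unique component of an $\mathcal{A}_1$-forest containing the label $1$, so the promoted and demoted vertices are unambiguous. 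As for what each approach buys: yours is shorter, avoids duplicating the exhaustiveness case analysis, and makes the equality $|\mathcal{A}_0(k,h)|=|\mathcal{A}_1(k,h)|$ conceptually transparent; the paper's keeps the two counting theorems logically independent, so the $\mathcal{A}_0$ count does not rest on the more delicate six-step $\mathcal{A}_1$ construction.
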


\begin{proof}
We construct $\mathcal{A}_0(k,h)$ from the three sets  $\mathcal{A}_0(k-1,h-1), \mathcal{A}_0(k,h)$, and $\mathcal{A}_0(k-1,h+1)$ in the following six steps.
For a graph $G$, we let $(u,v)$ denote an undirected edge in $G$ between the vertices $u$ and $v$. 

\vspace{1cm}
1. For $G \in \mathcal{A}_0(k-1,h-1)$, we adjoin to $G$ the component that consists of the vertex $k$ with one edge to an unlabeled vertex. This contributes 
\[
A_0(k-1,h-1)
\]
graphs to $\mathcal{A}_0(k,h)$. 

\vspace{1cm}
2. For $G \in \mathcal{A}_0(k-1,h)$, for each vertex $v$ in $G$ whether labeled or unlabeled, we create a graph $G'$ by adjoining the edge
\[
(k, v).
\]
This contributes 
\[
(k+h)A_0(k-1,h)
\]
graphs to $\mathcal{A}_0(k,h)$, as each $G \in \mathcal{A}(k-1,h)$ has $k+h$ vertices.

\vspace{1cm}
3. For $G \in \mathcal{A}_0(k-1,h)$, for each unlabeled vertex $v$ in $G$, we create a graph $G'$ by labeling $v$ as $k$ and then adjoining the edge 
\[
(k,v).
\]
This contributes 
\[
(h+1)A_0(k-1,h)
\]
graphs to $\mathcal{A}_0(k,h)$, as each $G \in \mathcal{A}_0(k-1,h)$ has $h+1$ unlabeled vertices. Thus in total the graphs in $\mathcal{A}_0(k-1,h)$ contribute 
\[
(1+2h+k)A_0(k-1,h)
\]
graphs to $\mathcal{A}_0(k,h)$.

\vspace{1cm}
4. For $G \in \mathcal{A}_0(k-1,h+1)$, let the components be
\[
C_1, C_1, ..., C_{h+2}.
\]
Take the unlabeled vertex $v_i$ in the component $C_i$.  Label $v$ as $k$, and then for each vertex $v'$ not in $C_i$, we create the graph $G'$ by adjoining the edge 
\[
(v,v').
\]
This contributes
\[
k+h+1 - |V(C_i)|
\]
graphs to $\mathcal{A}_0(k,h)$. Doing this for each $i$ contributes 
\begin{align} \nonumber
(k+h+1)(h+2) - \sum_{i=1}^{h+2} |V(C_i)| &= (k+h+1)(h+2) -(k+h+1) \\
&= (k+h+1)(h+1)
\end{align}
graphs to $\mathcal{A}_0(k,h)$.

This accounts for every graph in  $\mathcal{A}_0(k,h)$: take the vertex $ v=k$ in $\mathcal{A}_0(k,h)$ and exactly one of the following is true: 

1. A component consists solely of $v$ and an unlabeled vertex. 

2. $v$ is a leaf in a component that contains at least three vertices. 

3. $v$ is adjacent to an unlabeled vertex which is a leaf in a component that contains at least three vertices. 

4. None of the above.

This completes the proof.
\end{proof}

\section{Further Work}

\begin{itemize} 

\item Analyze the coefficients of $e_i e_j$ in the entries of $E(n)$. 

\item Prove the equivalence of minors using the Newton-Girard identities to express the power-sum functions in terms of the elementary symmetric functions, instead of using the indeterminates $z_i$. 

\item See if there is some family relating the matrix $E(n)$ and the Bezoutian matrix, or try to characterize all matrices that have equivalent minors.

\item See if tensors can be applied instead of just matrices.

\item See if these expressions for the inequalities can be applied to prove the convergence of the NRS($m$) algorithms of \cite{DefrancoNRS}. 

\item Apply these expressions for the inequalities to the Jensen polynomials of the Riemann xi function, using the integral kernels in 
\cite{Csordas}, \cite{DefrancoXi}, or the kernel used in Li's criterion.

\item See if the these expressions can be generalized to other root systems. 

\item Use these expressions to directly prove that they determine when a polynomial has real zeros.


\end{itemize}


\begin{thebibliography}{9}

\bibitem{Csordas} G. Csordas; T.S. Norfolk, and R.S. Varga, ``The Riemann hypothesis and the Tur\'{a}n inequalities", Trans. Amer. Math. Soc. 296 (1986), no. 2, 521-541.

\bibitem{DefrancoNRS} DeFranco, Mario, ``On Generalizations of the Newton-Raphson-Simpson Method", 2019, https://arxiv.org/abs/1903.10697 

\bibitem{DefrancoXi} DeFranco, Mario, ``On properties of the Taylor series coefficients of the Riemann xi function at $s=\frac{1}{2}$", 2019. https://arxiv.org/abs/1907.08984  

\bibitem{Dimitrov} D. K. Dimitrov, F. R. Lucas.,``Higher order Tur\'{a}n inequalities for the Riemann $\xi$ function", Proceedings of the American Mathematical Society 139(3):1013-1022,  March 2011.

\bibitem{Marik}J.  Ma\u{r}\'ik, ``On polynomials, all of whose zeros are real", (Czech) \u{C}asopis P\u{e}st. Mat. 89 1964 5-9.  

\bibitem{Obrechkoff} N. Obrechkoff, Zeros of Polynomials, Publ. Bulg. Acad. Sci., Sofia, 1963 (in Bulgarian); English translation (by I. Dimovski and P. Rusev) published by the Marin Drinov Academic Publishing House, Sofia, 2003.



\end{thebibliography}
\end{document}